\theoremstyle{plain}
\newtheorem{thm}{Theorem}[section]
\newtheorem{lem}[thm]{Lemma}
\newtheorem{prop}[thm]{Proposition}
\newtheorem{cor}[thm]{Corollary}
\theoremstyle{definition}
\newtheorem*{thm*}{Theorem}
\theoremstyle{remark}
\newtheorem{rem}{Remark}[section]
\numberwithin{equation}{section}
\newcommand{\He}{\mathbb{H}}
\newcommand{\C}{\mathbb{C}}
\newcommand{\R}{\mathbb{R}}
\title[ Chernoff and Ingham on the Heisenberg group]
{On theorems of Chernoff and Ingham\\
 on the Heisenberg group}
\author[Bagchi, Ganguly, Sarkar and Thangavelu]
{Sayan Bagchi, Pritam Ganguly, Jayanta Sarkar\\ and Sundaram Thangavelu}
\address[S. Bagchi]{Department of Mathematics and Statistics\\
	Indian Institute of Science Education and Research Kolkata\\
	Mohanpur-741246, Nadia, West Bengal, India.} 
\email{sayansamrat@gmail.com}
\address[P. Ganguly, S. Thangavelu]{Department of Mathematics, Indian Institute of Science,  Bangalore-560 012, India.}
\email{pritamg@iisc.ac.in, veluma@iisc.ac.in}
\address[J. Sarkar]{Stat-Math Unit, Indian Statistical Institute, 203, B.T. Road, Kolkata-700108, India}
\email{jayantasarkarmath@gmail.com}
\keywords{Heisenberg group, sublaplacian, quasi-analyticity, sobolev spaces, Chernoff's theorem, Ingham's theorem.}
\subjclass[2010]{Primary: 43A80. Secondary: 22E25, 33C45, 26E10, 46E35.}
\begin{document}
\begin{abstract}  We prove an analogue of Chernoff's theorem for the sublaplacian on the Heisenberg group and use it to prove a version of Ingham's theorem for the Fourier transform on the same group.

\end{abstract}
\maketitle

\section{Introduction} Roughly speaking, the uncertainty principle for the Fourier transform on $ \R^n $ says that a function $ f $ and its Fourier transform $\widehat{f} $ cannot both have rapid decay. Several manifestations of this principle are known: Heisenberg-Pauli-Weyl inequality, Paley-Wiener theorem, Hardy's uncertainty principle are some of the most well known. But there are lesser known results such as theorems of Ingham and  Levinson. The best decay a non trivial function can have is vanishing identically outside a compact set and for such functions it is well known that their Fourier transforms extend to $ \C^n $ as entire functions and hence cannot vanish on any open set. For  any such function of compact support, its Fourier transform cannot have any exponential decay for a similar reason: if $ |\widehat{f}(\xi)| \leq C e^{-a|\xi|} $ for some $ a > 0 $, then it follows that $ f $ extends to a tube domain in $ \C^n $ as a holomorphic function and hence it cannot have compact support. So it is natural to ask the question: what is the best possible decay that is allowed of a function of compact support? An interesting answer to this question is provided by the following theorem of Ingham \cite{I}.

\begin{thm}[Ingham]\label{ingh}
Let $ \Theta(y) $ be a nonnegative even function on $\R$ such that $ \Theta(y) $ decreases to zero when $ y \rightarrow \infty.$ There exists a nonzero continuous function $ f $ on $ \R,$ equal to zero outside an interval $ (-a,a) $ having Fourier transform $ \widehat{f} $ satisfying the estimate $ |\widehat{f}(y)|\leq C e^{-|y|\Theta(y)} $ if and only if $ \Theta $ satisfies $ \int_1^\infty \Theta(t) t^{-1} dt <\infty.$
\end{thm}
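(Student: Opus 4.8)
The plan is to establish the two implications by classical means, essentially following Ingham's argument: for the ``if'' part I would write down an explicit entire function as an infinite product, and for the ``only if'' part I would invoke the log-integrability of a nonzero function of exponential type that is bounded on the real line.

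\textbf{Necessity.} Suppose such an $f$ exists, with $f\not\equiv0$, supported in $(-a,a)$, and $|\widehat f(y)|\le Ce^{-|y|\Theta(y)}$ where $C\ge1$. Since $f\in L^1$ has compact support, $\widehat f$ extends to an entire function with $|\widehat f(x+iy)|\le\|f\|_1\,e^{a|y|}$, so $g(z):=\widehat f(z)\,e^{iaz}$ is a bounded holomorphic function on the upper half-plane which is not identically zero. I would then use the standard fact from Nevanlinna theory in a half-plane (equivalently, Krein's theorem for the Cartwright class) that the boundary values of such a $g$ obey $\int_{\R}\frac{\log^-|g(x)|}{1+x^2}\,dx<\infty$; since $|g(x)|=|\widehat f(x)|$ for real $x$, this says $\int_{\R}\frac{\log^-|\widehat f(x)|}{1+x^2}\,dx<\infty$. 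The hypothesis gives $\log^-|\widehat f(x)|\ge|x|\Theta(|x|)-\log C$ for every $x$, and $\frac{|x|}{1+x^2}\asymp|x|^{-1}$ for $|x|\ge1$; combining these yields $\int_1^\infty\Theta(t)\,t^{-1}\,dt<\infty$.

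\textbf{Sufficiency.} Assume $\int_1^\infty\Theta(t)\,t^{-1}\,dt<\infty$. Since $\Theta$ is decreasing, this is equivalent to $\sum_{j\ge1}\Theta(2^j)<\infty$. I would set $m_j=\lceil 2^{j+2}\Theta(2^j)\rceil$, take $\{\lambda_k\}_{k\ge1}$ to be the multiset containing $m_j$ copies of $2^j$ for each $j\ge1$, so that $\sum_k\lambda_k^{-1}=\sum_j m_j2^{-j}\le\sum_j\bigl(4\Theta(2^j)+2^{-j}\bigr)<\infty=:a$, and define
\[
F(z)=\prod_{k=1}^\infty\frac{\sin(z/\lambda_k)}{z/\lambda_k}.
\]
Since $\sum_k\lambda_k^{-2}<\infty$, the product converges locally uniformly, so $F$ is entire, even, with $F(0)=1$, and of exponential type at most $\sum_k\lambda_k^{-1}=a$. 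Using $|\sin u|\le\min(1,|u|)$ together with an integration by parts one gets, for real $x$,
\[
|F(x)|\le\prod_{k:\lambda_k\le|x|}\frac{\lambda_k}{|x|}=\exp\Bigl(-\int_0^{|x|}\frac{n(t)}{t}\,dt\Bigr),\qquad n(t):=\#\{k:\lambda_k\le t\}.
\]
A direct dyadic estimate --- using $m_j\ge4\cdot2^j\Theta(2^j)$, the monotonicity of $\Theta$, and $\sum_{i\ge1}i2^{-i}=2$ --- shows $\int_0^{|x|}n(t)\,t^{-1}\,dt\ge|x|\Theta(|x|)$ for all sufficiently large $|x|$, whence $|F(x)|\le C_0e^{-|x|\Theta(|x|)}$ on $\R$. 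In particular $F\in L^1(\R)\cap L^2(\R)$, so by the Paley--Wiener theorem its inverse Fourier transform $f$ is continuous and supported in $[-a,a]$; since $F$ is even, $\widehat f=F$ (up to normalisation), and $f\not\equiv0$ because $F\not\equiv0$. This $f$ has all the required properties.

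\textbf{Main obstacle.} I expect the one genuinely non-elementary input to be the log-integrability estimate in the necessity part: this is exactly the place where the compact support of $f$ is used to full strength, and it cannot be replaced by anything merely qualitative. Within the construction the only delicate point is the dyadic estimate, where the assumption that $\Theta$ is monotone is essential: for a non-monotone $\Theta$ of the same order of magnitude the least nondecreasing majorant of $t\Theta(t)$ may fail the integrability needed for $F$ to have finite exponential type, and the scheme breaks down.
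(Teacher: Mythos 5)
Your argument is correct in substance, but it takes a genuinely different route from the one the paper relies on. The paper does not reprove Theorem \ref{ingh}: it cites Ingham, whose necessity argument runs through quasi-analyticity (the Denjoy--Carleman theorem), and that real-variable mechanism is exactly what the paper generalizes --- the decay hypothesis on $\widehat f$ controls the iterates $\|\Delta^m f\|_2$ (resp.\ $\|\mathcal{L}^m f\|_2$), divergence of $\sum_m \|\Delta^m f\|_2^{-1/2m}$ places $f$ in a quasi-analytic class via a Chernoff-type theorem, and vanishing near a point then forces $f\equiv 0$, so the integral must converge. You instead derive necessity from the $H^\infty$ log-integrability theorem: $\widehat f$ is entire of exponential type $a$ and bounded on $\R$, so $g(z)=\widehat f(z)e^{iaz}$ is bounded on the upper half-plane and $\int_{\R}\log^-|\widehat f(x)|\,(1+x^2)^{-1}dx<\infty$, which is incompatible with $\int_1^\infty \Theta(t)t^{-1}dt=\infty$. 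Both are valid; your route is shorter on $\R$, but it leans on one-variable complex analysis in the dual variable, which is precisely what is unavailable for the operator-valued transform on $\He^n$ --- hence the paper's insistence on the Chernoff route. Your sufficiency construction (a product of $\sin(z/\lambda_k)/(z/\lambda_k)$ with dyadic multiplicities tuned to $\Theta$) is essentially Ingham's, and it is the Fourier-side counterpart of the paper's Section 4 construction, which performs the corresponding infinite convolution of normalized indicator functions on the group side. Two small points to tidy: integrability of $F$ does not follow from the bound $|F(x)|\le C_0e^{-|x|\Theta(|x|)}$ alone (take $\Theta(t)=t^{-1}$, for which that bound carries no decay), but it does follow from your own inequality $|F(x)|\le \exp\bigl(-\int_0^{|x|}n(t)\,t^{-1}dt\bigr)$, since $n(t)\to\infty$ yields decay faster than any polynomial; and the clause ``$<\infty=:a$'' should simply read $a:=\sum_k\lambda_k^{-1}$, together with the remark that the theorem only asks for support in some interval $(-a,a)$, not a prescribed one.
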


This theorem of Ingham and its close relatives   Paley -Wiener (\cite{PW1, PW2})  and Levinson (\cite{Levinson}) theorems have received considerable attention in recent years. In \cite{BRS} Bhowmik et al proved analogues of the above theorem for $ \R^n,$ the $n$-dimensional torus $ \mathbb{T}^n $  and step two nilpotent Lie groups. See also the recent work of Bowmik-Pusti-Ray \cite{BPR} for a version of Ingham's theorem for the Fourier transform on Riemannian symmetric spaces of non-compact type. As we are interested in Ingham's theorem on the Heisenberg group, let us recall the result proved in \cite{BRS}. Let $ \He^n = \C^n \times \R $ be the Heisenberg group. For an integrable function $ f $ on $ \He^n $ let $ \widehat{f}(\lambda) $ be the operator valued Fourier transform of $ f $ indexed by non-zero real $ \lambda.$  Measuring the decay of the Fourier transform in terms of the Hilbert-Schmidt operator norm $ \|\widehat{f}(\lambda)\|_{HS} $ Bhowmik et al have proved the following result.

\begin{thm}[Bhowmik-Ray-Sen]\label{BRS}
Let $ \Theta(\lambda) $ be a nonnegative even function on $\R$ such that $ \Theta(\lambda) $ decreases to zero when $ \lambda \rightarrow \infty.$ There exists a nonzero, compactly supported  continuous function $ f $ on $ \He^n,$  whose Fourier transform satisfies the estimate $ \|\widehat{f}(\lambda)\|_{HS} \leq C |\lambda|^{n/2}e^{-|\lambda| \Theta(\lambda)} $ if  the integral  $ \int_1^\infty \Theta(t) t^{-1} dt <\infty.$ On the other hand, if the above estimate is valid for a function $ f $ and the integral $ \int_1^\infty \Theta(t) t^{-1} dt $ diverges, then the vanishing of $ f $ on any set of the form $ \{z\in \C^n: |z|< \delta \} \times \R $ forces $ f $ to be identically zero.
\end{thm}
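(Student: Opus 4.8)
The plan is to treat the two implications separately, reducing each to a classical theorem: the sufficiency to the one-dimensional Ingham theorem (Theorem~\ref{ingh}), and the necessity to the Denjoy--Carleman theorem on quasi-analytic classes. In both directions the bridge between the group Fourier transform and ordinary Fourier analysis is the Plancherel theorem for the Weyl transform, which gives $\|\widehat{f}(\lambda)\|_{HS}=c_{n}|\lambda|^{-n/2}\,\|f^{\lambda}\|_{L^{2}(\C^{n})}$, where $f^{\lambda}(z)=\int_{\R}f(z,t)\,e^{i\lambda t}\,dt$ is the partial Fourier transform in the central variable, so that $\widehat{f}(\lambda)=W_{\lambda}(f^{\lambda})$.

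\emph{Sufficiency.} Assume $\int_{1}^{\infty}\Theta(t)t^{-1}\,dt<\infty$. By Theorem~\ref{ingh}, in the smooth form furnished by the usual infinite product construction (or after one mollification), there is a nonzero $\psi\in C_{c}^{\infty}(\R)$ with $|\widehat{\psi}(\lambda)|\le Ce^{-|\lambda|\Theta(\lambda)}$; put $\psi_{1}=\psi^{(n)}\in C_{c}^{\infty}(\R)$, which is still nonzero (a nonzero compactly supported function is not a polynomial) and satisfies $|\widehat{\psi_{1}}(\lambda)|=|\lambda|^{n}|\widehat{\psi}(\lambda)|\le C|\lambda|^{n}e^{-|\lambda|\Theta(\lambda)}$. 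Choose any nonzero $\phi\in C_{c}(\C^{n})$ and set $f=\phi\otimes\psi_{1}\in C_{c}(\He^{n})$. Then $f^{\lambda}=\widehat{\psi_{1}}(\lambda)\,\phi$ and $\widehat{f}(\lambda)=\widehat{\psi_{1}}(\lambda)\,W_{\lambda}(\phi)$, whence $\|\widehat{f}(\lambda)\|_{HS}=c_{n}|\lambda|^{-n/2}\|\phi\|_{2}\,|\widehat{\psi_{1}}(\lambda)|\le C'|\lambda|^{n/2}e^{-|\lambda|\Theta(\lambda)}$. The factor $|\lambda|^{n/2}$ is precisely what is produced by the Plancherel measure $|\lambda|^{n}\,d\lambda$ on $\widehat{\He^{n}}$, and replacing $\psi$ by $\psi^{(n)}$ is what cancels the $|\lambda|^{-n/2}$ singularity of $\|W_{\lambda}(\phi)\|_{HS}$ at $\lambda=0$.

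\emph{Necessity.} Rephrasing on the partial-transform side, the hypothesis reads $\|f^{\lambda}\|_{L^{2}(\C^{n})}\le C|\lambda|^{n}e^{-|\lambda|\Theta(\lambda)}$ for a.e.\ $\lambda$, and the vanishing hypothesis says $f^{\lambda}$ vanishes on $\{|z|<\delta\}$ for a.e.\ $\lambda$; the target is $f^{\lambda}\equiv0$. The guiding principle is the classical one: Ingham-type decay forces membership in a quasi-analytic class. Concretely, one wants to show that $f$ (or its $\lambda$-slices) is a quasi-analytic vector for the sublaplacian $\mathcal{L}$ on $\He^{n}$ (respectively, for the special Hermite operators on $\C^{n}$), i.e.\ that $\sum_{k}\|\mathcal{L}^{k}f\|_{2}^{-1/2k}=\infty$, and then apply a Chernoff-type theorem asserting that a quasi-analytic vector which vanishes on the cylinder $\{|z|<\delta\}\times\R$ must vanish identically. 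The analytic heart of this is the standard equivalence, obtained through the Legendre transform of the convex weight $|\lambda|\mapsto|\lambda|\Theta(\lambda)$, between the divergence of $\int_{1}^{\infty}\Theta(t)t^{-1}\,dt$ and the Denjoy--Carleman condition $\sum_{k}M_{k}^{-1/k}=\infty$ for the moment sequence $M_{k}=\int_{\R}|\lambda|^{k}e^{-|\lambda|\Theta(\lambda)}\,d\lambda$; this is the same computation that governs the classical theorems of Ingham and Levinson, and it is what turns the decay estimate into the quasi-analyticity of the defining sequence.

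\emph{Main obstacle.} The difficulty is that $\|\widehat{f}(\lambda)\|_{HS}$ controls only the size of $f^{\lambda}$ in $L^{2}(\C^{n})$ and says nothing about its horizontal derivatives, because the operators implementing horizontal differentiation on the Fourier side are unbounded, and $f$ is not assumed smooth, so the quantities $\|\mathcal{L}^{k}f\|_{2}$ need not be finite a priori. The two obvious cures fight the hypotheses: group convolution with a $C_{c}^{\infty}$ mollifier smooths $f$ and preserves the decay of $\|\widehat{f}(\lambda)\|_{HS}$ but introduces a Gevrey-type factor that can destroy the Carleman condition, while the heat semigroup $e^{-s\mathcal{L}}$ provides analytic regularity but destroys the vanishing on the cylinder. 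Overcoming this is where the real work lies, and it uses the special geometry: the class of functions obeying the estimate and the family of cylinders $\{|z-w|<\delta\}\times\R$ are both preserved by translations in the $z$-variable, and the vanishing set is invariant under the $U(n)$-action by automorphisms; one exploits this to regularize only in directions transverse to the obstruction while tracking the moments $M_{k}$ carefully enough that the Carleman sum still diverges, reducing matters to a one-variable quasi-analytic statement and Denjoy--Carleman, or alternatively to a connectedness argument proving that $\{\,w\in\C^{n}:f\equiv0\text{ on }\{|z-w|<\delta\}\times\R\,\}$ is nonempty, closed and open, hence all of $\C^{n}$. Everything else reduces to routine bookkeeping with the Plancherel theorem and the Hermite calculus on the Heisenberg group.
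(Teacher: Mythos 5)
First, a point of reference: the paper does not prove Theorem~\ref{BRS} at all --- it is quoted from Bhowmik--Ray--Sen \cite{BRS}, and the introduction only records that their proof goes through the Radon transform and the Bochner--Taylor several-variable quasi-analyticity theorem. So your attempt can only be judged on its own merits and against that description. Your sufficiency argument is fine and essentially the standard one: the Weyl--Plancherel identity $\|\widehat{f}(\lambda)\|_{HS}=c_n|\lambda|^{-n/2}\|f^{\lambda}\|_{L^2(\C^n)}$, a tensor product $f=\phi\otimes\psi^{(n)}$ with $\psi$ furnished by the one-dimensional Ingham theorem (mollified once), and the $n$-th derivative to cancel the $|\lambda|^{-n/2}$ singularity; this is correct, complete, and more elementary than the machinery the paper builds in Section~4 (which is designed for the stronger operator estimate of Theorem~\ref{ingh-hei}, not for Theorem~\ref{BRS}).

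The necessity half, however, contains a genuine gap, and not only because the ``main obstacle'' paragraph ends by deferring the real work. The route you propose --- show $f$ is a quasi-analytic vector for $\mathcal{L}$ and invoke a Chernoff-type theorem --- is structurally mismatched to the hypothesis. The bound $\|\widehat{f}(\lambda)\|_{HS}\leq C|\lambda|^{n/2}e^{-|\lambda|\Theta(\lambda)}$ is equivalent to $\|f^{\lambda}\|_{L^2(\C^n)}\leq C|\lambda|^{n}e^{-|\lambda|\Theta(\lambda)}$, i.e.\ it is decay purely in the central parameter $\lambda$; it gives no decay whatsoever in the Hermite index, whereas on the $\lambda$-fiber the spectrum of $\mathcal{L}$ is $(2|\alpha|+n)|\lambda|$ with $|\alpha|$ unbounded. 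Consequently $\|\mathcal{L}^m f\|_2$ need not be finite, and no regularization can manufacture the missing transverse information --- this is precisely why the present paper replaces the Hilbert--Schmidt hypothesis by $\widehat{f}(\lambda)^{*}\widehat{f}(\lambda)\leq Ce^{-2\Theta(\sqrt{H(\lambda)})\sqrt{H(\lambda)}}$ before its Chernoff theorem (Theorem~\ref{CH}) can be applied. With only the $|\lambda|$-decay, the usable conclusion is quasi-analyticity in the central variable: for any fixed $\phi\in C_c(\C^n)$ the scalar function $g_{\phi}(t)=\int_{\C^n}f(z,t)\overline{\phi(z)}\,dz$ has $|\widehat{g_{\phi}}(\lambda)|\leq C|\lambda|^{n}e^{-|\lambda|\Theta(\lambda)}$ and hence lies in a quasi-analytic class when $\int_1^{\infty}\Theta(t)t^{-1}dt=\infty$; the whole difficulty is to produce, from the vanishing of $f$ on the cylinder, some interval (or point of infinite-order vanishing) for such one-dimensional slices --- this is where the Radon transform in the $z$-variable and Bochner--Taylor enter in \cite{BRS}, and it is exactly the step your sketch never supplies. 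Two of your side remarks are also off: group convolution with a small-support mollifier multiplies $\widehat{f}(\lambda)$ by a bounded operator, so it preserves the Hilbert--Schmidt decay without any ``Gevrey-type factor'' (the paper itself uses $f\ast F_{\varepsilon}$ this way in Section~4); and the proposed open-closed argument begs the question, since openness of the set of centers of cylinders on which $f$ vanishes is precisely the propagation statement to be proved.
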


As the Fourier transform on the Heisenberg group is operator valued, it is natural to measure the decay of $ \widehat{f}(\lambda) $ by comparing it with the Hermite semigroup $ e^{-aH(\lambda)} $ generated by $ H(\lambda)= -\Delta_{\mathbb{R}^n}+\lambda^2 |x|^2.$ In this connection, let us recall the following two versions of Hardy's uncertainty principle. Let $ p_a(z,t) $ stand for the heat kernel associated to the sublaplacian $ \mathcal{L} $ on the Heisenberg group whose Fourier transform turns out to be  the Hermite semigroup $ e^{-aH(\lambda)} .$ The version in which one measures the decay of $ \widehat{f}(\lambda) $ in terms of its Hilbert-Schmidt operator norm reads as follows. If
\begin{equation}
|f(z,t)| \leq C e^{-a(|z|^2+t^2)} ,\,\,  \|\widehat{f}(\lambda)\|_{HS} \leq C e^{-b \lambda^2 } 
\end{equation}
then $ f = 0 $ whenever $ ab > 1/4.$ This is essentially a theorem in the $ t$-vairable and can be easily deduced from Hardy's  theorem on $ \R $, see Theorem 2.9.1 in \cite{TH3}. Compare this with the following version, Theorem 2.9.2 in \cite{TH3}. If 
\begin{equation}
|f(z,t)| \leq C p_a(z,t) ,\,\,   \widehat{f}(\lambda)^\ast \widehat{f}(\lambda) \leq C e^{-2bH(\lambda)} 
\end{equation}
then $ f = 0 $ whenever $ a < b.$ This latter version is the exact analogue of Hardy's theorem for the Heisenberg group, which we can view not merely as an uncertainty principle but also as a characterisation of the heat kernel. Hardy's theorem in the context of semi-simple Lie groups and non-compact Riemannian symmetric spaces are also to be viewed in this perspective.

We remark that the  Hermite semigroup has been used to measure the decay  of the Fourier transform in connection with the heat kernel transform \cite{KTX}, Pfannschmidt's theorem \cite{TH4} and the extension problem for  the sublaplacian \cite{RT} on the Heisenberg group. In connection with the study of Poisson integrals, it has been noted in \cite{ TH5} that when the Fourier transform of $ f $ satisfies an estimate of the form $\widehat{f}(\lambda)^\ast \widehat{f}(\lambda) \leq C e^{-a \sqrt{H(\lambda)}} ,$ then the function extends to a tube domain in the complexification of $ \He^n $ as a holomorphic function and hence the vanishing of $ f $ on an open set forces it to vanish identically. It is therefore natural to ask if the same conclusion can be arrived at by replacing the constant $ a $ in the above estimate by an operator $ \Theta(\sqrt{H(\lambda)}) $ for a function $ \Theta $ decreasing to zero at infinity. Our investigations have led us to the following analogue of Ingham's theorem for the Fourier transform on $ \He^n.$ 

\begin{thm}\label{ingh-hei}
Let $ \Theta(\lambda) $ be a nonnegative even function on $\R$ such that $ \Theta(\lambda) $ decreases to zero when $ \lambda \rightarrow \infty $  and satisfies the condition $ \int_1^\infty \Theta(t) t^{-1} dt <\infty.$  Then there exists a nonzero compactly supported continuous function $ f $ on $ \He^n$ whose  Fourier transform $ \widehat{f} $ satisfies  the estimate 
\begin{equation} \widehat{f}(\lambda)^\ast \widehat{f}(\lambda) \leq C  e^{-2\Theta(\sqrt{H(\lambda)})\sqrt{H(\lambda)}} .\end{equation}
 Conversely, if there exists a nontrivial $f $ satisfying (1.3) and the extra assumption   $ f(z,t) = f( |(z,t)| |z|^{-1}z, 0) $ where $ |(z,t)| = (|z|^4+ t^2)^{1/4} $ is the Koranyi norm on $ \He^n$  which vanishes on  a neighbourhood of $ 0$ then it is necessary that   $ \int_1^\infty \Theta(t) t^{-1} dt <\infty .$ 
\end{thm}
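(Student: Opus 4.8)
The plan is to prove the two implications by quite different arguments. For the direct statement I would construct $f$ by transplanting a classical one--dimensional Ingham function through the spectral calculus of the sublaplacian; for the converse I would convert the operator estimate~(1.3) into Sobolev--type bounds for $\mathcal{L}$ and feed them into the Chernoff--type quasi-analyticity theorem for $\mathcal{L}$ proved elsewhere in this paper.

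For the direct implication, start from the elementary half of the classical Ingham theorem (Theorem~\ref{ingh}): since $\Theta$ is even, decreases to zero and $\int_1^\infty\Theta(t)t^{-1}\,dt<\infty$, there is a nonzero real even $\psi\in C_c^\infty(\R)$ with $\operatorname{supp}\psi\subseteq[-a,a]$ and $|\widehat\psi(\xi)|\le Ce^{-|\xi|\Theta(\xi)}$. Put $m=\widehat\psi$ --- a real, even, nonzero Schwartz function with $m(\rho)^2\le C^2e^{-2\rho\Theta(\rho)}$ for $\rho\ge0$ --- and let $f$ be the convolution kernel of $m(\sqrt{\mathcal L})$, defined by the spectral theorem for the positive self-adjoint operator $\mathcal{L}$. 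By Fourier inversion $m(\sqrt{\mathcal L})=\tfrac1{2\pi}\int_{-a}^{a}\psi(s)\cos\!\big(s\sqrt{\mathcal L}\big)\,ds$, so the finite propagation speed of the wave equation for $\mathcal{L}$ on $\He^n$ makes $f$ supported in the Carnot--Carath\'eodory ball of radius $a$; moreover $f$ is continuous because $m\in\mathcal S(\R)$, and nonzero because $m\not\equiv0$. Finally, the Fourier transform intertwines $\mathcal{L}$ with $H(\lambda)$, hence $\widehat f(\lambda)=m(\sqrt{H(\lambda)})$, and monotonicity of the functional calculus for the positive operator $\sqrt{H(\lambda)}$ upgrades $m(\rho)^2\le C^2e^{-2\rho\Theta(\rho)}$ to $\widehat f(\lambda)^\ast\widehat f(\lambda)=m(\sqrt{H(\lambda)})^2\le C^2e^{-2\Theta(\sqrt{H(\lambda)})\sqrt{H(\lambda)}}$, which is~(1.3).

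For the converse I would argue by contraposition: assume $\int_1^\infty\Theta(t)t^{-1}\,dt=\infty$ and let $f$ satisfy~(1.3), the symmetry hypothesis, and vanish near $0$; I must deduce $f\equiv0$. The bound~(1.3) already forces enough regularity on $f$ for it to be continuous, and then the symmetry hypothesis forces $f$ to depend only on the Koranyi gauge $|(z,t)|$, hence to be $U(n)$--radial; consequently $\widehat f(\lambda)$ is diagonal in the Hermite basis and the whole problem lives in an essentially one--dimensional elliptic setting --- precisely where the Heisenberg Chernoff theorem applies and where vanishing near the identity is meaningful. Now Plancherel on $\He^n$ and the intertwining $\widehat{\mathcal L^kf}(\lambda)=\widehat f(\lambda)H(\lambda)^k$ give
\[
\|\mathcal L^kf\|_2^2=c_n\int_{-\infty}^{\infty}\operatorname{tr}\!\big(H(\lambda)^{2k}\,\widehat f(\lambda)^\ast\widehat f(\lambda)\big)\,|\lambda|^n\,d\lambda ,
\]
and, since $A\le B$ implies $\operatorname{tr}(C^\ast AC)\le\operatorname{tr}(C^\ast BC)$, the estimate~(1.3) yields
\[
\|\mathcal L^kf\|_2^2\le C\int_{-\infty}^{\infty}\operatorname{tr}\!\big(H(\lambda)^{2k}e^{-2\Theta(\sqrt{H(\lambda)})\sqrt{H(\lambda)}}\big)\,|\lambda|^n\,d\lambda .
\]
Evaluating the trace over the Hermite spectrum (eigenvalues $(2j+n)|\lambda|$, multiplicities $\asymp j^{n-1}$) and comparing the resulting sum--integral with $\int_0^\infty\rho^{4k+2n+1}e^{-2\rho\Theta(\rho)}\,d\rho$ --- which converges because, when the Dini integral diverges, $\rho\Theta(\rho)$ grows faster than any power of $\log\rho$ --- one reaches an estimate of the shape $\|\mathcal L^kf\|_2\le C\,N_{4k}^{1/2}$, where $N_m:=\sup_{\rho>0}\rho^m e^{-\rho\Theta(\rho)}$.

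The last ingredient is a real--variable equivalence of Denjoy--Carleman type: for $N_m$ as above, $\int_1^\infty\Theta(t)t^{-1}\,dt=\infty$ if and only if $\sum_m N_m^{-1/m}=\infty$ --- the point being that $\log\!\big(\sup_m r^m/N_m\big)$ is, up to convexification, the function $\rho\mapsto\rho\Theta(\rho)$ read in the variable $\log\rho$, so its logarithmic integral is comparable to $\int_1^\infty\Theta(t)t^{-1}\,dt$. Combining this with the preceding step, in the divergent case $\sum_k\|\mathcal L^kf\|_2^{-1/2k}=\infty$, i.e.\ $f$ is a quasi-analytic vector for $\mathcal{L}$; since $f$ vanishes on an open neighbourhood of the identity, the analogue of Chernoff's theorem for $\mathcal{L}$ on $\He^n$ established in this paper forces $f\equiv0$, the desired contradiction. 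I expect the crux to be exactly this interface: extracting from the operator inequality~(1.3) the clean scalar estimate $\|\mathcal L^kf\|_2\lesssim N_{4k}^{1/2}$ uniformly in $k$, keeping the Hermite multiplicities and the $\lambda$--integration under control, and then matching the Carleman condition $\sum_m N_m^{-1/m}=\infty$ with the divergence of the Dini integral --- a lemma classical in spirit but leaning essentially on the monotonicity of $\Theta$. Everything else is the bookkeeping of applying the Chernoff theorem and invoking injectivity of the group Fourier transform.
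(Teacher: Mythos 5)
Your treatment of the direct half is a genuinely different route from the paper's and is essentially viable: the paper constructs the compactly supported function by hand, as an infinite convolution $F_1\ast F_2\ast\cdots$ of normalized indicator functions, and verifies the decay of $\widehat F(\lambda)$ through sharp Laguerre asymptotics, whereas you transplant a one--dimensional Ingham function through the spectral calculus, $f=$ kernel of $m(\sqrt{\mathcal L})$ with $m=\widehat\psi$, and get compact support from finite propagation speed of $\cos(s\sqrt{\mathcal L})$. This is slicker but leans on outside machinery (finite propagation speed for the sub-Laplacian with respect to the Carnot--Carath\'eodory metric, Hulanicki-type results to get a continuous kernel, and the existence of a \emph{smooth} compactly supported Ingham function, which you should obtain by mollifying Ingham's continuous one); the paper's construction is more elementary and self-contained. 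Incidentally, the paper also needs its explicit construction later, as an auxiliary device in the converse, which your plan does not.

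The converse is where there is a genuine gap. Your key claim --- that when $\int_1^\infty\Theta(t)t^{-1}dt=\infty$ the quantity $\rho\Theta(\rho)$ grows faster than any power of $\log\rho$, so that $\int_0^\infty\rho^{4k+2n+1}e^{-2\rho\Theta(\rho)}d\rho<\infty$ and $N_m=\sup_\rho\rho^m e^{-\rho\Theta(\rho)}<\infty$ --- is false. A decreasing $\Theta$ with divergent Dini integral can satisfy $\liminf_{\rho\to\infty}\rho\Theta(\rho)<\infty$: let $\Theta$ be constant, equal to $1/t_k$, on very long blocks $[t_k,t_{k+1}]$ with $t_{k+1}$ chosen so large that the dyadic sums $\sum_j\Theta(2^j)$ diverge; then $\rho\Theta(\rho)=1$ at $\rho=t_k$, so $e^{-\rho\Theta(\rho)}$ does not decay along that sequence, every $N_m$ with $m\ge1$ is infinite, your bound $\|\mathcal L^kf\|_2\lesssim N_{4k}^{1/2}$ is vacuous, and the asserted equivalence between divergence of the Dini integral and $\sum_mN_m^{-1/m}=\infty$ fails. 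In fact for such $\Theta$ the hypothesis (1.3) alone does not even give $\mathcal L^kf\in L^2$. The paper circumvents exactly this: it first proves the converse under the extra assumption $\Theta(y)\ge 2|y|^{-1/2}$ (which makes all moments finite and yields $\|\mathcal L^mf\|_2^2\le C(4m)^{4m}\Theta(m^4)^{-4m}$, hence $\sum_m\|\mathcal L^mf\|_2^{-1/2m}=\infty$), and then reduces the general case to this one by convolving $f$ with a compactly supported auxiliary function $F$ produced by the sufficiency half with $\Psi(y)=(1+|y|)^{-1/2}$. That reduction creates a second difficulty your plan does not see: $h=f\ast F$ no longer satisfies the Koranyi-symmetry hypothesis $f(z,t)=f(|(z,t)||z|^{-1}z,0)$ required by Theorem \ref{CH}, so the paper cannot invoke that theorem as a black box and instead reruns the essential-self-adjointness/Stieltjes-vector argument of Proposition \ref{non-essen} with the family $V=\{R_\sigma\delta_r(f\ast F_\varepsilon)\}$. (A small further inaccuracy: the symmetry hypothesis makes $f$ independent of the Heisenberg angle $\theta$, not $U(n)$-radial, so $\widehat f(\lambda)$ need not be diagonal in the Hermite basis; this remark of yours is inessential but incorrect.) To repair your converse you would need both the reduction step and a substitute for Theorem \ref{CH} applicable to the convolved function.
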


We believe  that the above result is true without the extra assumption on $ f .$  As the proof requires a general version of Chernoff's theorem for the sublaplacian which is yet to be proved (see Theorem \ref{CH} below), we only have the above result at present. However, the class of functions to which the above theorem applies is relatively large. To see this, consider the Heisenberg coordinates $ (\rho, \omega, \theta) $ of a point $ (z,t) \in \He^n $ defined by $   z = r \omega, \, r >0, \omega \in S^{2n-1},  t+ir^2 = \rho^2 e^{i\theta}, 0 \leq \theta \leq \pi$  so that $ f(z,t) = f(\rho \omega \sqrt{\sin \theta}, \rho^2 \cos \theta). $ The functions $ f $ satisfying the extra assumption in Theorem \ref{ingh-hei} are precisely those which are independent of $ \theta $ in the Heisenberg variables. Any function $ g $ on  $ \C^n $ gives rise to such a function on $ \He^n $ by the prescription $ f(z,t) = g(\rho \omega).$

Theorem \ref{ingh} was proved in \cite{I} by Ingham by making use of  Denjoy-Carleman theorem on quasi-analytic functions. In \cite{BRS} the authors have used Radon transform and a several variable extension of Denjoy-Carleman theorem due to Bochner and Taylor \cite{BT} in order to prove the $n$-dimensional version of Theorem \ref{ingh}. An $ L^2 $  variant of the result of Bochner-Taylor which was proved by Chernoff in \cite{CH2} has turned out to be very useful in establishing Ingham type theorems.  

\begin{thm} \cite[Chernoff]{CH2}
	Let $f$ be a smooth function on $\mathbb{R}^n.$ Assume that $\Delta^mf\in L^2(\mathbb{R}^n)$ for all $m\in \mathbb{N}$ and $\sum_{m=1}^{\infty}\|\Delta_{\mathbb{R}^n}^mf\|_2^{-\frac{1}{2m}}=\infty.$ If $f$ and all its partial derivatives   vanish at $0$, then $f$ is identically zero.
\end{thm}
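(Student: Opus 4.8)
The plan is to reduce the $n$-dimensional statement to the classical one-dimensional Denjoy--Carleman theorem by restricting $f$ to rays through the origin. Fix $x\in\R^n$ and set $g(t)=g_x(t)=f(tx)$, a smooth function on $\R$. By the chain rule $g^{(k)}(0)=\sum_{|\alpha|=k}\frac{k!}{\alpha!}\,x^\alpha(\partial^\alpha f)(0)$, so the hypothesis that every partial derivative of $f$ vanishes at the origin forces $g^{(k)}(0)=0$ for all $k\ge 0$; that is, $g$ vanishes to infinite order at $t=0$. It therefore suffices to exhibit a sequence $(M_k)$ of positive reals with $\sum_{k\ge 1} M_k^{-1/k}=\infty$ and constants $C,h>0$ (allowed to depend on $x$) such that $|g^{(k)}(t)|\le C h^k M_k$ for all $t\in\R$ and all $k$: then $g$ lies in the quasi-analytic Carleman class $C\{M_k\}$, hence $g\equiv 0$, and in particular $f(x)=g(1)=0$. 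Since $x$ is arbitrary, this gives $f\equiv 0$.

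To bound $g^{(k)}$ I would control $\partial^\alpha f$ in $L^\infty(\R^n)$ for $|\alpha|=k$. Fix $s_0=\lfloor n/2\rfloor+1$, so that $H^{s_0}(\R^n)\hookrightarrow L^\infty(\R^n)$, and put $m_k=\lceil(k+s_0)/2\rceil$, so that $2m_k\ge k+s_0$ and $m_k\le k/2+C_n$. Combining the Sobolev embedding with the elementary estimate $\|u\|_{H^{2m}}\le 2^{m}\big(\|u\|_2+\|\Delta^m u\|_2\big)$ (immediate from Plancherel and $(1+|\xi|^2)^{2m}\le 2^{2m}(1+|\xi|^{4m})$) gives, for $|\alpha|=k$, the bound $\|\partial^\alpha f\|_{L^\infty}\le c_n\,2^{m_k}\big(\|f\|_2+\|\Delta^{m_k}f\|_2\big)$. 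Since $\sum_{|\alpha|=k}\frac{k!}{\alpha!}=n^k$ and $|x^\alpha|\le|x|^{k}$ for $|\alpha|=k$, this yields $|g^{(k)}(t)|\le (n|x|\sqrt2)^{k}\,c_n'\big(\|f\|_2+\|\Delta^{m_k}f\|_2\big)$ uniformly in $t\in\R$. Absorbing the geometric factor into $h^k$, I am reduced to the sequence $M_k:=\max\big(\|f\|_2,\|\Delta^{m_k}f\|_2\big)$.

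It remains to check that the hypothesis $\sum_{m\ge 1}\|\Delta^m f\|_2^{-1/(2m)}=\infty$ forces $\sum_{k\ge 1}M_k^{-1/k}=\infty$; this is enough, since then the Denjoy--Carleman theorem (applied, if necessary, after passing to the log-convex regularization of $(M_k)$, which only enlarges $\sum_k M_k^{-1/k}$) shows $C\{M_k\}$ to be quasi-analytic. Writing $a_m=\|\Delta^m f\|_2$, I would first note that $\sum_m\max(\|f\|_2,a_m)^{-1/(2m)}=\infty$ is equivalent to $\sum_m a_m^{-1/(2m)}=\infty$: if $a_m<\|f\|_2$ for infinitely many $m$ the corresponding terms of the first series stay bounded below, while if $a_m\ge\|f\|_2$ for all large $m$ the two series agree from that point on. Next, as $k$ runs over $\N$ the index $m_k$ takes each value about twice and $1/k=(1+o(1))/(2m_k)$, so $\sum_k M_k^{-1/k}$ and $\sum_m a_m^{-1/(2m)}$ diverge or converge together; here the log-convexity of $m\mapsto\|\Delta^m f\|_2$, which follows from $a_m^2=\int|\xi|^{4m}|\widehat f(\xi)|^2\,d\xi$ and Lyapunov's inequality, streamlines the comparison of $a_m^{-1/k}$ with $a_m^{-1/(2m_k)}$ (in the regime where the hypothesis can hold, their ratio is $a_m^{O(k^{-2})}\to 1$). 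Granting this, the proof is complete.

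The conceptual outline --- restrict to lines, apply Sobolev and elliptic estimates, invoke Denjoy--Carleman --- is short, and the hard part will be exactly this last bookkeeping: keeping track of the index shift $k\leftrightarrow 2m$, the $\tfrac{n}{2}$-order loss in the Sobolev step, the low-frequency term $\|f\|_2$, and the combinatorial and elliptic constants $n^k,\,c_n,\,2^{m_k}$, and confirming that none of them affects whether the Carleman series diverges, so that the divergence hypothesis on $(\|\Delta^m f\|_2)$ transfers faithfully to $(M_k)$. A secondary point requiring care is which form of the Denjoy--Carleman theorem one quotes: for a sequence not a priori log-convex one must pass to its log-convex regularization or use Carleman's integral criterion, which is where the log-convexity of $m\mapsto\|\Delta^m f\|_2$ is genuinely convenient. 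Uniformity of the estimate $|g_x^{(k)}(t)|\le Ch^kM_k$ over all $t\in\R$ is automatic, since the underlying $L^\infty$ bounds on $\partial^\alpha f$ are global.
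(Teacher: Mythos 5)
Your proposal is correct, but it takes a genuinely different route from the one standing behind this statement in the paper. The paper does not reprove the Euclidean theorem at all: it is quoted from Chernoff \cite{CH2}, and both Chernoff's original proof and the paper's own Heisenberg analogue (Theorem \ref{CH}) are operator-theoretic. There one shows that the Laplacian (respectively the sublaplacian) restricted to smooth functions all of whose derivatives vanish at the origin fails to be essentially self-adjoint on a suitable Sobolev space (Proposition \ref{essen}), while a nontrivial $f$ with $\sum_m\|\Delta^m f\|_2^{-1/2m}=\infty$ would generate, through products with matrix coefficients, dilations and rotations, a dense family of Stieltjes vectors and hence force essential self-adjointness by the Nussbaum--Masson--McClary--Simon criterion (Theorem \ref{S}); the contradiction kills $f$. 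You instead restrict $f$ to rays through the origin and reduce to the classical one-dimensional Denjoy--Carleman theorem, using Sobolev embedding together with the Plancherel bound $\|f\|_{H^{2m}}\le 2^m\bigl(\|f\|_2+\|\Delta^m f\|_2\bigr)$ to convert the $L^2$ hypotheses into uniform derivative bounds; this works, and the bookkeeping you flag does go through, though it is cleaner to run it via the standard series lemma (in the spirit of Lemma \ref{series}): discard the indices $m$ with $\|\Delta^m f\|_2^{-1/(2m)}<2^{-m}$, whose contribution converges, and observe that on the remaining indices the bounded exponent shift from $1/(2m)$ to $1/k$ with $k=2m+O(1)$ changes each term by at most a fixed constant factor --- rather than through the assertion that the ratio tends to $1$, which is false for the discarded (astronomically large) terms; also note that you use $\|f\|_2$, i.e.\ the case $m=0$ of the hypothesis, which matches Chernoff's original formulation. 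As for what each approach buys: yours is elementary and self-contained given one-dimensional quasi-analyticity, but it leans heavily on the Euclidean structure (restriction to lines through $0$, the explicit Fourier symbol of $\Delta$), and exactly this is unavailable for $\mathcal{L}$ on $\He^n$; the quasi-analytic-vector machinery is what allows the paper to prove the Heisenberg version, at the price of the extra symmetry assumption appearing in Theorem \ref{CH}.
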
 
This theorem shows how partial differential operators generate the class of quasi-analytic functions. Recently,  Bhowmik-Pusti-Ray \cite{BPR} have established an analogue of Chernoff's theorem for the Laplace-Beltrami operator on non-compact Riemannian symmetric spaces and use the same in proving a version of Ingham's theorem for the Helgason Fourier transform.
It  is therefore natural to look for an analogue of this result for sublaplacian on the Heisenberg group. In this paper, we prove the following result.
\begin{thm} \label{CH}
Let $ \mathcal{L} $ be the sublaplacian on the Heisenberg group and  let $f$ be a smooth function on $\mathbb{H}^n$ such that $ f(\rho \, \omega \sqrt{\sin \theta}, \rho^2 \cos \theta) = f(\rho \, \omega , 0) $ in the Heisenberg coordinates. Assume that $\mathcal{L}^mf\in L^2(\mathbb{H}^n)$ for all $m\in \mathbb{N}$ and $\sum_{m=1}^{\infty}\|\mathcal{L}^mf\|_2^{-\frac{1}{2m}}=\infty$. If $f$ and  all its partial derivatives vanish at $0$, then $f$ is identically zero.
\end{thm}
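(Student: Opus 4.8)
The strategy I would follow is to convert the hypotheses on $f$ into the hypotheses of Chernoff's Euclidean theorem \cite{CH2} on $\mathbb{R}^{2n+2}$ (the homogeneous dimension of $\mathbb{H}^n$), by exploiting the explicit form of $\mathcal{L}$ adapted to the Heisenberg coordinates $(\rho,\omega,\theta)$. The first step is a symmetry reduction: decompose $f=\sum_{p,q}f_{p,q}$ into its $U(n)$-isotypic components in the $\omega$-variable, where $f_{p,q}(z,t)=v(\rho)\,Y_{p,q}(\omega)$ with $Y_{p,q}$ a fixed bigraded spherical harmonic on $S^{2n-1}$ and $v$ a function of $\rho$ alone (this is exactly what the $\theta$-independence gives). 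Since $\mathcal{L}$ commutes with the $U(n)$-action, the decomposition is orthogonal and $\mathcal{L}$-invariant; hence $\|\mathcal{L}^mf_{p,q}\|_2\le\|\mathcal{L}^mf\|_2$, each $f_{p,q}$ still vanishes together with all its derivatives at $0$ (so $v$ vanishes to infinite order at $\rho=0$), and the Carleman series for $f_{p,q}$ still diverges. It therefore suffices to prove $f_{p,q}\equiv0$ for each $(p,q)$; I will describe the case $p=q=0$, the general one being the same with shifted parameters plus a harmless first-order ``drift'' term coming from $N$ (of coefficient proportional to $i(p-q)\cos\theta$).

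The second step is the computation of $\mathcal{L}$ in the coordinates $z=r\omega,\ r^2=\rho^2\sin\theta,\ t=\rho^2\cos\theta$, under which $dz\,dt=c_n\rho^{2n+1}\sin^{n-1}\theta\,d\rho\,d\theta\,d\omega$. A direct calculation shows that on $SO(2n)$-invariant functions
\begin{equation*}
\mathcal{L}=\sin\theta\Big(-\partial_\rho^2-\tfrac{2n+1}{\rho}\partial_\rho-\tfrac{4}{\rho^2}\Lambda_\theta\Big),\qquad \Lambda_\theta=\partial_\theta^2+n\cot\theta\,\partial_\theta ,
\end{equation*}
so that $\widetilde M:=-\partial_\rho^2-\tfrac{2n+1}{\rho}\partial_\rho$ is precisely the radial part of $\Delta_{\mathbb{R}^{2n+2}}$ and $\Lambda_\theta$ is the zonal part of $\Delta_{S^{n+1}}$. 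From this one gets the recursion
\begin{equation*}
\mathcal{L}\big(\sin^k\theta\,h(\rho)\big)=\sin^{k+1}\theta\Big(\widetilde M h+\tfrac{4k(k+n)}{\rho^2}h\Big)-\sin^{k-1}\theta\cdot\tfrac{4k(k+n-1)}{\rho^2}h ,
\end{equation*}
and, iterating it, that $\mathcal{L}^mf$ is a polynomial in $\sin\theta$ of degree $m$ and of the parity of $m$, with coefficients that are functions of $\rho$; the coefficient of $\sin^m\theta$ equals $\big(\widetilde M+\tfrac{4(m-1)(m-1+n)}{\rho^2}\big)\cdots\big(\widetilde M+\tfrac{4(1+n)}{\rho^2}\big)\widetilde M\,\varphi$, which is $\widetilde M^{m}\varphi$ plus terms carrying extra negative powers of $\rho$.

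The third step is the transfer. Let $\Psi$ be the radial function on $\mathbb{R}^{2n+2}$ with profile $\varphi:=v$. Expanding $\|\mathcal{L}^mf\|_2^2$ and integrating first in $\theta$: the functions $\sin^m\theta,\sin^{m-2}\theta,\dots$ are linearly independent in $L^2\big([0,\pi],\sin^{n-1}\theta\,d\theta\big)$, so their Gram matrix is positive definite and $\|\mathcal{L}^mf\|_2^2$ dominates a multiple of the $L^2(\rho^{2n+1}d\rho)$-norm of the top coefficient; iterated Hardy--Rellich inequalities on this weighted half-line — valid because $\varphi$, hence each $\widetilde M^{j}\varphi$, vanishes to infinite order at $\rho=0$ — let one strip off the $\rho^{-2k}$ corrections and obtain the key estimate $\|\Delta^m\Psi\|_{L^2(\mathbb{R}^{2n+2})}^2\le C^{m}\|\mathcal{L}^mf\|_{L^2(\mathbb{H}^n)}^2$ with $C$ independent of $m$. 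Then $\Delta^m\Psi\in L^2(\mathbb{R}^{2n+2})$ for all $m$, $\sum_m\|\Delta^m\Psi\|_2^{-1/2m}=\infty$, and $\Psi$ with all its partial derivatives vanishes at $0$; Chernoff's theorem \cite{CH2} gives $\Psi\equiv0$, i.e. $\varphi\equiv0$, hence $f_{p,q}\equiv0$ and $f\equiv0$.

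The step I expect to be the real obstacle is the estimate $\|\Delta^m\Psi\|_2\le C^{m/2}\|\mathcal{L}^mf\|_2$: one must control, uniformly in $m$, the proliferation of error terms produced by the fact that the class of $\theta$-independent functions is \emph{not} invariant under $\mathcal{L}$. This requires both a quantitative lower bound for the least eigenvalue of the $\theta$-Gram matrix of $\{\sin^{m-2j}\theta\}$ and an $m$-fold Hardy--Rellich estimate with geometrically controlled constants; and the borderline case $n=1$, where $\mathbb{R}^{2n+2}=\mathbb{R}^4$ and the classical Rellich constant degenerates, will need a separate argument exploiting that $\varphi$ vanishes to infinite — not merely high — order at the origin.
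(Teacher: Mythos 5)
Your reduction to the Euclidean Chernoff theorem has a genuine gap at exactly the step you yourself flag, and it is not a technicality: the transfer estimate $\|\Delta^m\Psi\|_{L^2(\R^{2n+2})}\le C^{m/2}\|\mathcal{L}^mf\|_{L^2(\He^n)}$ is never proved, and the route you sketch loses more than a geometric factor. Stripping the corrections $4k(k+n)\rho^{-2}$, $k\le m$, by iterated Hardy--Rellich inequalities costs a factor of order $k^2$ at each of up to $m$ stages, i.e.\ an accumulated loss of order $(m!)^2$ rather than $C^m$; and a loss of even $(m!)^{1/m}\sim m$ per term is fatal, because $\sum_m\|\mathcal{L}^mf\|_2^{-\frac{1}{2m}}=\infty$ does \emph{not} imply $\sum_m m^{-1}\|\mathcal{L}^mf\|_2^{-\frac{1}{2m}}=\infty$ (consider $\|\mathcal{L}^mf\|_2^{-\frac{1}{2m}}\sim (m\log m)^{-1}$). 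Similarly, the smallest eigenvalue of the Gram matrix of $\{\sin^{m-2j}\theta\}_j$ in $L^2([0,\pi],\sin^{n-1}\theta\,d\theta)$ decays at least exponentially in $m$ and is nowhere estimated. Unless all of these losses are shown to be at worst geometric --- and no argument is offered --- the Carleman condition is destroyed in the transfer and Chernoff's theorem on $\R^{2n+2}$ cannot be invoked. There is also an error at the algebraic starting point: with the paper's normalization ($\mathcal{L}=-\Delta_{\C^n}-\frac14|z|^2\partial_t^2+N\partial_t$ and $t+ir^2=\rho^2e^{i\theta}$) the coefficient of $\partial_\rho^2$ after the change of variables is $-\sin\theta\bigl(\sin^2\theta+\tfrac{1}{16}\cos^2\theta\bigr)$, not $-\sin\theta$; for instance your formula yields $\mathcal{L}(t^2)=-8|z|^2$ whereas the correct value is $-\tfrac12|z|^2$. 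A clean product structure requires coordinates adapted to this normalization (essentially $4t+ir^2=\rho^2e^{i\theta}$), and rescaling $t$ alters either the operator or the symmetry class in the hypothesis, so even the recursion on which the whole scheme rests needs repair before the analytic difficulties above are faced.

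For comparison, the paper avoids any quantitative reduction to $\R^{2n+2}$. It follows Chernoff's operator-theoretic method: the sublaplacian is restricted to a domain $D(A)$ of smooth functions vanishing to infinite order at the origin, viewed inside the Sobolev space $W^{s,2}(\He^n)$ with $(n-1)<s\le n+1$; one shows this operator is not essentially self-adjoint, while a nontrivial $f$ as in the statement would generate, via $\delta_r$, $R_\sigma$ and multiplication by the matrix entries $e^{\lambda}_{\alpha,\beta}$, a total set of Stieltjes vectors, forcing essential self-adjointness by the Nussbaum--Masson--McClary criterion --- a contradiction. In that argument the $\theta$-independence is used only to move a point of nonvanishing to an arbitrary point by $R_\sigma\circ\delta_r$, with no need for the uniform-in-$m$ spectral estimates on which your proposal hinges.
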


An immediate corollary of this theorem  is the following, which can be seen as an $L^2$ version of the classical Denjoy-Carleman theorem on the Heisenberg group using iterates of sublaplacian.
\begin{cor}
	Let $\{M_k\}_k$ be a log convex sequence. Define $\mathcal{C}(\{M_k\}_k,\mathcal{L},\mathbb{H}^n)$ to be the class of all smooth functions $f$ on $\mathbb{H}^n$ satisfying the condition $ f(\rho \, \omega \sqrt{\sin \theta}, \rho^2 \cos \theta) = f(\rho \, \omega , 0) $ such that $\mathcal{L}^mf\in L^2(\mathbb{H}^n)$ for all $  k\in \mathbb{N}$ and $\|\mathcal{L}^kf\|_2\leq M_k\lambda^k$ for some constant $\lambda$ (may depend on $f$). Suppose that $\sum_{k=1}^{\infty}M_k^{-\frac{1}{2k}}=\infty.$ Then every member of that class is quasi-analytic.
\end{cor}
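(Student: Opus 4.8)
The plan is to derive the corollary as an essentially immediate consequence of Theorem \ref{CH}. Recall that, in the Denjoy--Carleman sense adapted to the present setting, the class $\mathcal{C}(\{M_k\}_k,\mathcal{L},\He^n)$ is \emph{quasi-analytic} precisely when the only $f$ in it whose derivatives all vanish at the origin --- i.e. $f$ together with every partial derivative of $f$ vanishes at $0$ --- is $f\equiv 0$. (Since the defining relation $f(\rho\,\omega\sqrt{\sin\theta},\rho^2\cos\theta)=f(\rho\,\omega,0)$ is not invariant under the group translations of $\He^n$, it is natural to test quasi-analyticity at the origin, which is the fixed point of the relevant dilations.) So the task is to show that any such $f$ vanishes identically.

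To this end, fix $f\in\mathcal{C}(\{M_k\}_k,\mathcal{L},\He^n)$ with $f$ and all its partial derivatives vanishing at $0$. By the very definition of the class there is a single constant $\lambda=\lambda_f>0$ with $\|\mathcal{L}^kf\|_2\le M_k\lambda^k$ for every $k\in\mathbb{N}$; in particular $\mathcal{L}^mf\in L^2(\He^n)$ for all $m$, and $f$ satisfies the $\theta$-independence hypothesis of Theorem \ref{CH}. Taking $2k$-th roots and reciprocals,
\begin{equation*}
\|\mathcal{L}^kf\|_2^{-\frac{1}{2k}}\ \ge\ \big(M_k\lambda^k\big)^{-\frac{1}{2k}}\ =\ \lambda^{-\frac{1}{2}}\,M_k^{-\frac{1}{2k}} .
\end{equation*}
Summing over $k\ge 1$ and invoking the Carleman-type hypothesis $\sum_{k\ge 1}M_k^{-\frac{1}{2k}}=\infty$, we get $\sum_{k\ge 1}\|\mathcal{L}^kf\|_2^{-\frac{1}{2k}}=\infty$. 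Now Theorem \ref{CH} applies verbatim and forces $f\equiv 0$, which is exactly the assertion that the class is quasi-analytic.

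A word on the log-convexity hypothesis on $\{M_k\}_k$: in the argument above it plays no role beyond aligning the statement with the classical Denjoy--Carleman framework, where log-convexity guarantees that the divergence condition $\sum_k M_k^{-1/(2k)}=\infty$ is equivalent to the familiar reformulations (in terms of $\inf_k M_k^{1/k}$, the associated function $T(r)=\sup_k r^k/M_k$, and so on) and that $\mathcal{C}(\{M_k\}_k,\mathcal{L},\He^n)$ behaves like a genuine Carleman class stable under the natural operations. Accordingly, there is no real obstacle in the proof of the corollary itself --- the entire analytic content resides in Theorem \ref{CH} --- and the only point needing (minor) care is the uniformity of the constant $\lambda$ over all $k$, which is already built into the definition of the class.
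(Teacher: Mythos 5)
Your proof is correct and follows exactly the route the paper intends: the paper presents this as an immediate corollary of Theorem \ref{CH}, and the observation $\|\mathcal{L}^kf\|_2^{-\frac{1}{2k}}\ge \lambda^{-1/2}M_k^{-\frac{1}{2k}}$, which transfers the divergence of $\sum_k M_k^{-\frac{1}{2k}}$ to $\sum_k\|\mathcal{L}^kf\|_2^{-\frac{1}{2k}}$, is precisely the intended reduction. Your remarks on the role of log-convexity and on testing quasi-analyticity at the origin are sensible and consistent with the paper's framework.
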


We conclude the introduction by briefly describing the organisation of the paper. After recalling the required preliminaries regarding harmonic analysis on Heisenberg group  in Section 2 we prove an analogue of Chernoff's theorem for the sublaplacian (Theorem \ref{CH})  in Section 3. In section 4, we use this version of Chernoff's theorem to prove Ingham's theorem on the Heisenberg group i.e., Theorem \ref{ingh-hei}.   

\section{Preliminaries} 
  In this section, we collect the results which are necessary for the study of uncertainty principles for the Fourier transform on the Heisenberg group. We refer the reader to the  two classical books Folland \cite{F} and Taylor \cite{Taylor} for the preliminaries of harmonic analysis on the Heisenberg group. However, we will be closely following the notations of  the books of Thangavelu  \cite{TH2} and \cite{TH3}.  
\subsection{ Heisenberg group and Fourier transform}
Let $\mathbb{H}^n:=\mathbb{C}^n\times\mathbb{R}$ denote the $(2n+1)$-  Heisenberg group equipped with the group law 
$$(z, t).(w, s):=\big(z+w, t+s+\frac{1}{2}\Im(z.\bar{w})\big),\ \forall (z,t),(w,s)\in \mathbb{H}^n.$$ This is a step two nilpotent Lie group where the Lebesgue measure $dzdt$ on $\mathbb{C}^n\times\mathbb{R}$ serves as the Haar measure. The representation theory of $\mathbb{H}^n$ is well-studied in the literature. In order to define Fourier transform, we use the Schr\"odinger representations as described below.  

 For each non zero real number $ \lambda $ we have an infinite dimensional representation $ \pi_\lambda $ realised on the Hilbert space $ L^2( \R^n).$ These are explicitly given by
$$ \pi_\lambda(z,t) \varphi(\xi) = e^{i\lambda t} e^{i(x \cdot \xi+ \frac{1}{2}x \cdot y)}\varphi(\xi+y),\,\,\,$$
where $ z = x+iy $ and $ \varphi \in L^2(\R^n).$ These representations are known to be  unitary and irreducible. Moreover, by a theorem of Stone and Von-Neumann, (see e.g., \cite{F})  upto unitary equivalence these account for all the infinite dimensional irreducible unitary representations of $ \mathbb{H}^n $ which act as $e^{i\lambda t}I$ on the center. Also there is another class of finite dimensional irreducible representations. As they  do not contribute to the Plancherel measure  we will not describe them here.

The Fourier transform of a function $ f \in L^1(\mathbb{H}^n) $ is the operator valued function obtained by integrating $ f $ against $ \pi_\lambda$:
$$ \hat{f}(\lambda) = \int_{\mathbb{H}^n} f(z,t) \pi_\lambda(z,t)  dz dt .$$  Note that $ \hat{f}(\lambda) $ is a bounded linear operator on $ L^2(\R^n).$ It is known that when $ f \in L^1 \cap L^2(\mathbb{H}^n) $ its Fourier transform  is actually a Hilbert-Schmidt operator and one has
$$ \int_{\mathbb{H}^n} |f(z,t)|^2 dz dt = (2\pi)^{-(n+1)}\int_{-\infty}^\infty \|\widehat{f}(\lambda)\|_{HS}^2 |\lambda|^n d\lambda  $$
where $\|.\|_{HS}$ denote the Hilbert-Schmidt norm. 
The above allows us to extend  the Fourier transform as a unitary operator between $ L^2(\mathbb{H}^n) $ and the Hilbert space of Hilbert-Schmidt operator valued functions  on $ \R $ which are square integrable with respect to the Plancherel measure  $ d\mu(\lambda) = (2\pi)^{-n-1} |\lambda|^n d\lambda.$ We polarize the above identity to obtain 
$$\int_{\He^n}f(z,t)\overline{g(z,t)}dzdt=\int_{-\infty}^{\infty}tr(\widehat{f}(\lambda)\widehat{g}(\lambda)^*)~d\mu(\lambda).$$ Also for suitable function $f$ on $\He^n$ we have the following inversion formula
$$f(z,t)=\int_{-\infty}^{\infty}tr(\pi_{\lambda}(z,t)^*\widehat{f}(\lambda))d\mu(\lambda).$$
Now by definition of $\pi_{\lambda}$ and $\hat{f}(\lambda)$ it is easy to see that 
$$\widehat{f}(\lambda)=\int_{\C^n}f^{\lambda}(z)\pi_{\lambda}(z,0)dz $$ where 
$f^{\lambda}$ stands for the inverse Fourier transform of $f$ in the central variable:
$$f^{\lambda}(z):=\int_{-\infty}^{\infty}e^{i\lambda.t}f(z,t)dt.$$ This motivates the following operator. Given a function $g$ on $\C^n$, we consider the following   operator valued function  defined by
$$ W_{\lambda}(g):=\int_{\C^n}g(z)\pi_{\lambda}(z,0)dz.
$$ With these notations we note that  $\hat{f}(\lambda)=W_{\lambda}(f^{\lambda}).$  These transforms are called the Weyl transforms and for  $\lambda=1$ they are simply denoted by $ W(g) $ instead of $W_1(g).$  Moreover, the Fourier transform bahaves well with the convolution of two functions defined by $$f\ast g(x):=\int_{\He^n}f(xy^{-1})g(y)dy.$$ Infact, for any $f,g\in L^1(\mathbb{H}^n)$,   directly   from the definition it follows that $\widehat{f \ast g}(\lambda)=\hat{f}(\lambda)\hat{g}(\lambda).$ 
\subsection{Special functions and Fourier transform}
 For each $\lambda\neq0$, we consider the following  family of scaled Hermite functions indexed by $\alpha\in\mathbb{N}^n$: $$\Phi_\alpha^{\lambda}(x):=|\lambda|^{\frac{n}{4}}\Phi_\alpha(\sqrt{|\lambda| }x),~x\in\mathbb{R}^n $$ 
 where $\Phi_\alpha$ denote the $n-$dimensional Hermite functions (see \cite{TH1}). It is well-known that these  scaled functions $\Phi_\alpha^{\lambda}$ are eigenfunctions of the scaled Hermite operator $H(\lambda):=-\Delta_{\R^n}+\lambda^2|x|^2$ with eigenvalue $(2|\alpha|+n)|\lambda|$ and $\{\Phi_\alpha^{\lambda}:\alpha\in\mathbb{N}^n\}$ forms an orthonormal basis for $L^2(\R^n)$. As a consequence, 
 $$\|\widehat{f}(\lambda)\|_{HS}^2=\sum_{\alpha \in \mathbb{N}^n}\|\widehat{f}(\lambda)\Phi_\alpha^{\lambda}\|_2^2.$$ In view of this the Plancheral formula takes the following very useful form 
 $$\int_{\mathbb{H}^n} |f(z,t)|^2 dz dt =  \int_{-\infty}^\infty \sum_{\alpha \in \mathbb{N}^n}\|\widehat{f}(\lambda)\Phi_\alpha^{\lambda}\|_2^2 \  d\mu( \lambda) . $$

 Given $\sigma\in U(n)$, we define $R_{\sigma}f(z,t)=f(\sigma.z,t)$. We say that a function $f$ on $\He^n$ is radial if $f$ is invariant under the action of $U(n)$ i.e., $R_{\sigma}f=f$ for all $\sigma\in U(n).$ The Fourier transforms of such radial integrable funtions are  functions of the Hermite operator $ H(\lambda)$ .
 In fact, if  $ H(\lambda) = \sum_{k=0}^\infty (2k+n)|\lambda| P_k(\lambda)$  stands for the spectral decomposition of this operator, then for a radial intrgrable function $f$  we have
 $$ \widehat{f}(\lambda)  = \sum_{k=0}^\infty  R_k(\lambda, f) P_k(\lambda).$$ More explicitly, $P_k(\lambda)$ stands for the orthogonal projection of $L^2(\mathbb{R}^n)$ onto the $k^{th}$ eigenspace spanned by scaled Hermite functions $\Phi^{\lambda}_{\alpha}$ for $|\alpha|=k$. The coefficients $ R_k(\lambda,f) $ are explicitly given by
 \begin{equation}
R_k(\lambda,f)  =  \frac{k!(n-1)!}{(k+n-1)!} \int_{\C^n}  f^{\lambda }(z) \varphi^{n-1}_{k,\lambda}(z)~dz.
 \end{equation}
  
 In the above formula, $ \varphi_{k,\lambda}^{n-1} $ are the Laguerre functions of type $ (n-1)$:
 $$  \varphi^{n-1}_{k,\lambda}(z)  = L_k^{n-1}(\frac{1}{2}|\lambda||z|^2) e^{-\frac{1}{4}|\lambda||z|^2} $$ where $L^{n-1}_k$ denotes the Laguerre polynomial of type $(n-1)$. For the purpose of estimating the Fourier transform we need good estimates for the  Laguerre functions $\varphi_{k,\lambda}^{n-1}.$ In order to get such estimates, we use the available sharp estiamtes of standard Laguerre functions as described below in more general context.

 For any $ \delta > -1 $, let $ L_k^\delta(r) $ denote the Laguerre polynomials of type $ \delta$.  The standard Laguerre functions are defined by 
 $$
 \mathcal{L}_k^{\delta}(r)=\Big(\frac{\Gamma(k+1)\Gamma(\delta+1)}{\Gamma(k+\delta+1)}\Big)^{\frac12}L_k^{\delta}(r)e^{-\frac12r}r^{\delta/2}
 $$
 which  form an orthonormal system in $L^2((0,\infty),dr)$. In terms of $\mathcal{L}_k^{\delta}(r),$ we have
 $$
 \varphi_k^{\delta}(r)=2^{\delta}\Big(\frac{\Gamma(k+1)\Gamma(\delta+1)}{\Gamma(k+\delta+1)}\Big)^{-\frac12}r^{-\delta}\mathcal{L}_k^{\delta}\Big(\frac12r^2\Big).
 $$
 Asymptotic properties of $\mathcal{L}_k^{\delta}(r)$ are well known in the literature, see \cite[Lemma 1.5.3]{TH1}. The estimates in \cite[Lemma 1.5.3]{TH1}   are sharp, see \cite[Section 2]{M} and \cite[Section 7]{Mu}.
 For our convenience, we restate the result in terms of $ \varphi_{k,\lambda}^{n-1}(r).$
 
  \begin{lem} \label{lem:T}   Let $ \nu(k) = 2(2k+n) $ and  $C_{k,n} = \left(\frac{k!(n-1)!}{(k+n-1)!}\right)^{\frac12}.$ For  $\lambda\neq0, $ we have the estimates
    	 $$
    	 C_{k,n}\,\, |\varphi_{k,\lambda}^{n-1}(r)|\le C(r\sqrt{|\lambda|})^{-(n-1)}\begin{cases} (\frac{1}{2}\nu(k)r^2 |\lambda| )^{(n-1)/2}, &0\le r\le \frac{\sqrt{2}}{\sqrt{\nu(k)|\lambda|}}\\
    	  (\frac{1}{2}\nu(k)r^2 |\lambda|)^{-\frac14}, &\frac{\sqrt{2}}{\sqrt{\nu(k)|\lambda|}}\le r\le \frac{\sqrt{\nu(k)}}{ \sqrt{|\lambda|}}\\
    	  \nu(k)^{-\frac14}(\nu(k)^{\frac13}+|\nu(k)-\frac{1}{2} |\lambda| r^2|)^{-\frac14}, &\frac{\sqrt{\nu(k)}}{ \sqrt{|\lambda|}}\le r\le \frac{\sqrt{3\nu(k)}}{ \sqrt{|\lambda|}}\\
    	  e^{-\frac12\gamma r^2 |\lambda| }, & r\ge \frac{\sqrt{3\nu(k)}}{ \sqrt{|\lambda|}},
    	 \end{cases}
    	 $$ where $\gamma>0$ is a fixed constant and $C $  is independent of $k$ and $\lambda$. 
    \end{lem}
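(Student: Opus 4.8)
The plan is to derive this lemma directly from the classical sharp estimates for the standard Laguerre functions, of which it is merely a reformulation in Heisenberg-group notation: concretely, from \cite[Lemma 1.5.3]{TH1} — whose sharpness is recorded in \cite[Section 2]{M} and \cite[Section 7]{Mu} — via the dictionary between $\varphi^{n-1}_{k,\lambda}$ and $\mathcal L^{n-1}_k$.

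The first step is the purely algebraic identity
\begin{equation*}
C_{k,n}\,\varphi^{\,n-1}_{k,\lambda}(r)=c_n\,(\sqrt{|\lambda|}\,r)^{-(n-1)}\,\mathcal L^{\,n-1}_k\!\left(\tfrac12|\lambda|r^2\right),
\end{equation*}
with $c_n$ depending only on $n$. To obtain it I would first record the scaling relation $\varphi^{n-1}_{k,\lambda}(r)=\varphi^{n-1}_{k,1}(\sqrt{|\lambda|}\,r)$, which is immediate from $\varphi^{n-1}_{k,\lambda}(r)=L^{n-1}_k(\tfrac12|\lambda|r^2)e^{-\frac14|\lambda|r^2}$; then combine the identity recorded earlier expressing $\varphi^{\delta}_k$ through $\mathcal L^{\delta}_k$ with the definition of $\mathcal L^{\delta}_k$ in terms of $L^{\delta}_k$, and observe that the Gamma-factor normalizations $\big(\Gamma(k+1)\Gamma(n)/\Gamma(k+n)\big)^{\pm1/2}$ — which for $\delta=n-1$ are exactly $C_{k,n}^{\pm1}$ — cancel, leaving only the harmless $n$-dependent power of $2$ absorbed into $c_n$. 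Taking absolute values in this identity (all the other factors are positive) reduces the lemma to bounding $|\mathcal L^{\,n-1}_k(\tfrac12|\lambda|r^2)|$.

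The second step is to insert the known bound for $|\mathcal L^{\,n-1}_k(y)|$ from \cite[Lemma 1.5.3]{TH1}, evaluated at $y=\tfrac12|\lambda|r^2$ and with its parameter $\nu$, which in our notation equals $4k+2(n-1)+2=2(2k+n)=\nu(k)$. Under the substitution $y=\tfrac12|\lambda|r^2$ the four ranges $0\le y\le 1/\nu$, $1/\nu\le y\le \nu/2$, $\nu/2\le y\le 3\nu/2$, $y\ge 3\nu/2$ become precisely the four ranges of $r$ in the statement, while the four corresponding bounds $(\nu y)^{(n-1)/2}$, $(\nu y)^{-1/4}$, $\nu^{-1/4}(\nu^{1/3}+|\nu-y|)^{-1/4}$, $e^{-\gamma y}$ turn into the four right-hand sides listed — the last one being $e^{-\gamma y}=e^{-\frac12\gamma|\lambda|r^2}$. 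Since the implied constant in \cite[Lemma 1.5.3]{TH1} depends only on the Laguerre parameter, which here is the fixed number $n-1$, the resulting $C$ is independent of $k$ and $\lambda$, which completes the proof.

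There is no real obstacle here; the only points requiring care are bookkeeping ones: verifying in the first step that all the $k$-dependent normalizations genuinely cancel so that the clean factor $(\sqrt{|\lambda|}\,r)^{-(n-1)}\mathcal L^{\,n-1}_k(\tfrac12|\lambda|r^2)$ survives, and in the second step matching the conventions of \cite[Lemma 1.5.3]{TH1} — checking in particular that $4k+2(n-1)+2=2(2k+n)$, and, should that reference partition its ranges slightly differently, merging or splitting cases to recover exactly the four intervals above.
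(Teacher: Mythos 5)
Your proof is correct and is essentially the paper's own argument: the paper gives no independent proof of this lemma, stating only that it is a restatement of the sharp Laguerre estimates of \cite[Lemma 1.5.3]{TH1} in terms of $\varphi_{k,\lambda}^{n-1}$, and your two steps (the scaling/normalization identity $C_{k,n}\,\varphi^{n-1}_{k,\lambda}(r)=c_n(\sqrt{|\lambda|}\,r)^{-(n-1)}\mathcal{L}^{n-1}_k(\tfrac12|\lambda|r^2)$, then the substitution $y=\tfrac12|\lambda|r^2$ with $\nu=4k+2(n-1)+2=2(2k+n)=\nu(k)$) are exactly that translation carried out explicitly, with the constant depending only on the fixed parameter $n-1$.
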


\subsection{The sublaplacian and Sobolev spaces on $ \He^n$}We let $ \mathfrak{h}_n $ stand for the Heisenberg Lie algebra consisting of left invariant vector fields on $ \mathbb{H}^n .$  A  basis for $ \mathfrak{h}_n $ is provided by the $ 2n+1 $ vector fields
$$ X_j = \frac{\partial}{\partial{x_j}}+\frac{1}{2} y_j \frac{\partial}{\partial t}, \,\,Y_j = \frac{\partial}{\partial{y_j}}-\frac{1}{2} x_j \frac{\partial}{\partial t}, \,\, j = 1,2,..., n $$
and $ T = \frac{\partial}{\partial t}.$  These correspond to certain one parameter subgroups of $ \mathbb{H}^n.$ The sublaplacian on $\He^n$ is defined by $$\mathcal{L}:=-\sum_{j=1}^{\infty}(X_j^2+Y_j^2) $$ which can be explicitly calculated as
$$\mathcal{L}=-\Delta_{\C^n}-\frac{1}{4}|z|^2\frac{\partial^2}{\partial t^2}+N\frac{\partial}{\partial t}$$ where  $\Delta_{\C^n}$ stands for the Laplacian on $\C^n$ and $N$ is the rotation operator defined by 
$$N=\sum_{j=1}^{n}\left(x_j\frac{\partial}{\partial y_j}-y_j\frac{\partial}{\partial x_j}\right).$$ This is a sub-elliptic operator and homogeneous of degree $2$ with respect to the non-isotropic dilations given by $\delta_r(z,t)=(rz,r^2t).$ The sublaplacian is also invariant undrer  rotation i.e., $R_{\sigma}\circ \mathcal{L}=\mathcal{L}\circ R_{\sigma},~\sigma\in U(n).$
It is convenient for our purpose to represent the sublaplacian in terms of  another set of vector fields defined as follows:
$$Z_j:=\frac{1}{2}(X_j-iY_j)=\frac{\partial}{\partial z_j}+\frac{i}{4}\bar{z}_j\frac{\partial }{\partial t},~\bar{Z}_j:=\frac{1}{2}(X_j+iY_j)=\frac{\partial}{\partial \bar{z}_j}+\frac{i}{4}z_j\frac{\partial }{\partial t}$$ where $\frac{\partial}{\partial z_j}=\frac{1}{2}\left(\frac{\partial}{\partial x_j}-i\frac{\partial}{\partial y_j}\right)$ and $\frac{\partial}{\partial \bar{z}_j}=\frac{1}{2}\left(\frac{\partial}{\partial x_j}+i\frac{\partial}{\partial y_j}\right).$ Now an easy calculation yields 
$$\mathcal{L}=-\frac12\sum_{j=1}^n\left(Z_j\bar{Z}_j+\bar{Z}_jZ_j\right).$$
The action of Fourier transform on  $Z_jf$ , $\bar{Z}_jf$ and $Tf$ are well-known and  are given by 
\begin{equation}
\label{Frel}
\widehat{Z_jf}(\lambda)=i\widehat{f}(\lambda)A_j(\lambda)~,~\widehat{\bar{Z}_jf}(\lambda)=i\widehat{f}(\lambda)A_j(\lambda)^*~\text{and}~\widehat{Tf}(\lambda)=-i\lambda\widehat{f}(\lambda)
\end{equation}
   where $A_j(\lambda)$ and $A_j^*(\lambda)$ are the annihilation and creation operators given by $$A_j(\lambda)=\Big(-\frac{\partial}{\partial\xi_j}+i\lambda\xi_j\Big),~~~~A_j^*(\lambda)=\Big(\frac{\partial}{\partial\xi_j}+i\lambda\xi_j\Big).$$ 
These along with  the above representation of the sublaplacian yield 
the relation $\widehat{\mathcal{L}f}(\lambda)=\widehat{f}(\lambda)H(\lambda).$

 We can define the spaces $ W^{s,2}(\He^n) $ for any $ s \in \R $ as the completion of $ C_c^\infty(\He^n) $ under the norm $ \| f\|_{(s)} = \| (I+\mathcal{L})^{s/2}f \|_2 $ where the fractional powers $ (I+\mathcal{L})^{s/2} $ are defined using spectral theorem. 
To study these spaces, it is better to work with the following expression of the norm $ \| f\|_{(s)} $ for $ f \in C_c^\infty(\He^n).$ In view of Plancherel theorem for the Fourier transform
$$ \|f \|_{(s)}^2  = (2\pi)^{-n-1} \int_{-\infty}^\infty  \| \widehat{f}(\lambda) (1+H(\lambda))^{s/2}\|_{HS}^2 |\lambda|^n d\lambda $$
which is valid for any $ s \in \R.$ Here we have made use of the fact that $ \widehat{\mathcal{L}f}(\lambda) = \widehat{f}(\lambda)H(\lambda).$  Computing the Hilbert-Schmidt norm in terms of the Hermite basis, we have the more explicit expression:
$$ \|f \|_{(s)}^2  = (2\pi)^{-n-1} \int_{-\infty}^\infty  \sum_{\alpha \in \mathbb{N}^n}\sum_{\beta \in \mathbb{N}^n} (1+(2|\alpha|+n)|\lambda|)^s | \langle \widehat{f}(\lambda)\Phi_\alpha^\lambda, \Phi_\beta^\lambda \rangle |^2 |\lambda|^n d\lambda.$$
Consider $ \widehat{\He}^n= \R^\ast \times \mathbb{N}^n \times \mathbb{N}^n $  equipped with the measure $ \mu \times \nu $ where $ \nu $ is the counting measure on $ \mathbb{N}^n \times \mathbb{N}^n.$ The above shows that, for  $ f \in C_c^\infty(\He^n) $ the function  $ m(\lambda,\alpha,\beta) = \langle \widehat{f}(\lambda)\Phi_\alpha^\lambda, \Phi_\beta^\lambda \rangle.$ belongs to the weighted space 
$$ W^{s,2}(\widehat{\He}^n) = L^2(\widehat{\He}^n, w_s\, d(\mu \times \nu)) $$ where  $ w_s(\lambda,\alpha) = (1+(2|\alpha|+n)|\lambda|)^{s}.$  As these weighted $ L^2 $ spaces are complete, we can identify $ W^{s,2}(\He^n) $ with $ W^{s,2}(\widehat{\He}^n).$ It is then clear that for any $ s>0$ we have 
$$  W^{s,2}(\widehat{\He}^n)  \subset W^{0,2}(\widehat{\He}^n) \subset W^{-s,2}(\widehat{\He}^n) $$  and the same inclusion holds for $ W^{s,2}(\He^n).$  It is clear that  any $ m \in W^{s,2}(\widehat{\He}^n) $ can be written as $ m(\lambda, \alpha,\beta) = (1+(2|\alpha|+n)|\lambda|)^{-s/2} m_0(\lambda,\alpha,\beta) $ where $ m_0 \in W^{0,2}(\widehat{\He}^n)= L^2(\widehat{\He}^n)  $ for any $ s \in \R.$ Consequently, any $ f \in W^{s,2}(\He^n) $ can be written as  $ f = (I+\mathcal{L})^{-s/2}f_0 $, where $ f_0 \in L^2(\He^n) $ is the function which corresponds to $ m_0$  which is given explicitly by
$$ f_0(z,t)  = \int_{\widehat{\He}^n} m_0(\lambda,\alpha,\beta) e_{\alpha,\beta}^{-\lambda}(z,t) d\nu(\alpha,\beta) d\mu(\lambda).$$

Thus we see that $ f \in W^{s,2}(\He^n) $ if and only if there is an $ f_0 \in L^2(\He^n) $ such that  $ f = (I+\mathcal{L})^{-s/2}f_0 .$ The inner product on $ W^{s,2}(\He^n) $ is given by 
$$ \langle f, g \rangle_s =  \langle (I+\mathcal{L})^{s/2}f,(I+\mathcal{L})^{s/2}g \rangle = \langle f_0,g_0\rangle $$ where $ \langle f, g \rangle $ is the inner product in $ L^2(\He^n).$  This has the following interesting consequence. Given  $ f \in W^{s,2}(\He^n) $ and $ g \in W^{-s,2}(\He^n), $  let $ f_0, g_0 \in L^2(\He^n) $ be such that $ f = (I+\mathcal{L})^{-s/2}f_0 $ and $ g = (I+\mathcal{L})^{s/2}g_0.$ The duality bracket $ (f,g) $ defined by
$$ (f,g) = \langle (I+\mathcal{L})^{-s/2}f_0, (I+\mathcal{L})^{s/2}g_0 \rangle  = \langle f_0, g_0\rangle $$  allows us to identify the dual of $ W^{s,2}(\He^n) $ with $ W^{-s,2}(\He^n).$ This is also clear from the identification of $ W^{s,2}(\He^n) $ with $ W^{s,2}(\widehat{\He}^n).$ Thus for every $ g \in W^{-s,2}(\He^n) $ there is a linear functional $ \Lambda_g : W^{s,2}(\He^n) \rightarrow \C  $ given by $ \Lambda_g(f) = \langle f_0,g_0\rangle. $ 

The following observation is also very useful in applications. For  $ s > 0 $ every member  $ f \in W^{s,2}(\He^n)$ defines a distribution on $ \He^n.$  The same is true for every $ g \in W^{-s,2}(\He^n)$ as well. To see this, consider the map taking $ f \in C_c^\infty(\He^n) $ into the duality bracket $ (f,g)$ which satisfies
$$  |(f,g)| \leq \| f\|_{(s)} \|g\|_{(-s)} \leq \|g\|_{(-s)} \| (I+\mathcal{L})^m f\|_2 $$
where $ m > s/2 $ is an integer. From the  above it is clear that $ \Lambda_g(f) = (f,g) $ is a distribution. If $ g \in W^{-s,2}(\He^n) $ is such a distribution, it is possible to define its Fourier transform as an unbounded operator valued function on $ \R^\ast.$ Indeed, let $ g_0 \in L^2(\He^n) $ be such that $ g = (I+\mathcal{L})^{s/2}g_0 $ then we define $ \widehat{g}(\lambda) =  \widehat{g_0}(\lambda) 
(1+H(\lambda))^{s/2}$ which  is a densely defined operator whose action on $ \Phi_\alpha^\lambda $ is given by
$$ \widehat{g}(\lambda) \Phi_\alpha^\lambda = (1+(2|\alpha|+n)|\lambda|)^{s/2} \widehat{g_0}(\lambda) \Phi_\alpha^\lambda.$$
Thus we see that when $ g \in W^{-s,2}(\He^n) $ we have 
\begin{equation}\label{norm-remark}  \int_{-\infty}^\infty  \sum_{\alpha \in \mathbb{N}^n}\sum_{\beta \in \mathbb{N}^n} (1+(2|\alpha|+n)|\lambda|)^{-s} | \langle \widehat{g}(\lambda)\Phi_\alpha^\lambda, \Phi_\beta^\lambda \rangle |^2  d\mu(\lambda) = \int_{\He^n} |g_0(z,t)|^2 dz dt < \infty. \end{equation}

\begin{rem}\label{rem} When $ g \in W^{-s,2}(\He^n) $ is a compactly supported distribution, then we already have a definition of $ \widehat{g}(\lambda) $ given by
$  \langle \widehat{g}(\lambda) \Phi_\alpha^\lambda, \Phi_\beta^\lambda \rangle = ( g, e_{\alpha,\beta}^\lambda),$  the action of $ g $ on the smooth function $ e_{\alpha,\beta}^\lambda (z,t).$
The two definitions agree as $ e_{\alpha,\beta}^\lambda $ are eigenfunctions of $ \mathcal{L} $ with eigenvalues $ (2|\alpha|+n)|\lambda|.$
\end{rem}

\section{Chernoff's theorem on the Heisenberg group}
In this section we prove Theorem \ref{CH} for the sublaplacian on the Heisenberg group. For the proof we need to recall  some properties of the so called  Stieltjes vectors for the sublaplacian.

Let $X$ be a Banach space and $A,$  a linear operator on $X$ with domain $D(A)\subset X.$ A vector $x\in X$ is called a $C^{\infty}$- vector or smooth vector for $ A$ if $x\in \cap_{n=1}^{\infty}D(A^n).$ A $C^{\infty}$- vector $x$ is said to be a Stieltjes vector for $A$ if $\sum_{n=1}^{\infty}\|A^nx\|^{-\frac{1}{2n}}=\infty.$ These vectors were first introduced  by Nussbaum \cite{ANs} and independently by Masson and Mc Clary \cite{MM}. We denote the set of all Stieltjes vector for $A$ by $D_{St}(A).$ The following  theorem summarises  the interconnection  between the theory of Stieltjes vectors and the essential self adjointness of certain class of operators. 
\begin{thm}\label{S}
	Let $A$ be a semibounded symmetric operator on a Hilbert space $H$. Assume that the set $D_{St}(A)$ has a dense span. Then $A$ is essentially self adjoint.
\end{thm}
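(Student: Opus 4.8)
\noindent\emph{Plan of proof.} This is a theorem of Nussbaum (see also Masson--McClary), and the plan is to recall the standard argument. First I would reduce to the case $A\ge 0$: replacing $A$ by $A+c$ for a suitable constant $c\ge 0$ affects neither essential self-adjointness nor, by a routine estimate based on the log-convexity of $n\mapsto\|A^nx\|$ (which follows from $\|A^nx\|^2=\langle A^{n-1}x,A^{n+1}x\rangle\le\|A^{n-1}x\|\,\|A^{n+1}x\|$), the set $D_{St}(A)$. For a nonnegative symmetric operator it is standard that essential self-adjointness is equivalent to $\ker(A^*+1)=\{0\}$, i.e.\ to the density of $\operatorname{Ran}(A+1)$ in $H$. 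Since $D_{St}(A)$ has dense span, it then suffices to fix a vector $\phi$ with $A^*\phi=-\phi$ and a Stieltjes vector $x\ne 0$, and to show $\langle x,\phi\rangle=0$.

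Given such an $x$, I would study the numbers $a_n=\langle A^nx,x\rangle$, $n\ge 0$. Since $x$ is a smooth vector and $A\ge 0$, for every finitely supported scalar sequence $(c_j)$ one has $\sum_{j,k}\overline{c_j}c_k\,a_{j+k}=\big\|\textstyle\sum_j c_jA^jx\big\|^2\ge 0$ and $\sum_{j,k}\overline{c_j}c_k\,a_{j+k+1}=\big\langle A\sum_j c_jA^jx,\sum_k c_kA^kx\big\rangle\ge 0$, so $(a_n)$ is a Stieltjes moment sequence: $a_n=\int_0^\infty t^n\,d\rho(t)$ with $\rho\ge 0$, $\rho([0,\infty))=a_0=\|x\|^2$. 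Cauchy--Schwarz for this measure gives $a_n\le(a_0a_{2n})^{1/2}$, hence $a_n^{-1/2n}\ge a_0^{-1/4n}a_{2n}^{-1/4n}$; since $a_0^{-1/4n}\to 1$ and $a_{2n}^{-1/4n}=\|A^nx\|^{-1/2n}$, the hypothesis $\sum_n\|A^nx\|^{-1/2n}=\infty$ yields Carleman's condition $\sum_n a_n^{-1/2n}=\infty$, so the moment problem attached to $x$ is determinate.

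Next I would pass to the $A$-cyclic subspace $\mathcal{H}_x=\overline{\operatorname{span}}\{A^nx:n\ge 0\}$ and restrict $A$ to $\operatorname{span}\{A^nx\}$, obtaining a nonnegative symmetric operator $A_x$ on $\mathcal{H}_x$ with cyclic vector $x$ and moment sequence $(a_n)$. By the classical correspondence between determinate moment problems and essentially self-adjoint Jacobi operators, $A_x$ is essentially self-adjoint on $\mathcal{H}_x$ (the case where $\{A^nx\}$ is linearly dependent being trivial). Therefore $\overline{A_x}\ge 0$ is self-adjoint, $-1$ lies in its resolvent set, and $\operatorname{Ran}(A_x+1)$ is dense in $\mathcal{H}_x$, which contains $x$. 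Choosing $u_k\in\operatorname{span}\{A^nx\}\subset D(A)$ with $(A+1)u_k\to x$ and using $\phi\in D(A^*)$, $A^*\phi=-\phi$, we get
\[
\langle x,\phi\rangle=\lim_k\langle(A+1)u_k,\phi\rangle=\lim_k\langle u_k,(A^*+1)\phi\rangle=0 .
\]
Since $x$ ranged over $D_{St}(A)$, whose span is dense, $\phi=0$, completing the argument.

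The delicate point, and the main obstacle, is the moment-problem input in the last two paragraphs: getting from the Stieltjes-vector condition to determinacy, and from determinacy to essential self-adjointness of the cyclic restriction $A_x$. This is exactly where the semiboundedness of $A$ enters: for a general symmetric operator one would need Nussbaum's stronger quasi-analyticity hypothesis $\sum\|A^nx\|^{-1/n}=\infty$, and it is the half-line structure of the Stieltjes moment problem that allows the weaker condition to suffice here. I would treat the required facts on the Stieltjes moment problem, Carleman's criterion, and Jacobi matrices as classical and cite the standard references together with the papers of Nussbaum and Masson--McClary.
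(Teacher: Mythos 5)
The paper itself does not prove Theorem \ref{S}: it is quoted from Nussbaum and Masson--McClary, with Simon's article cited for a simplified proof, so your proposal can only be measured against those standard arguments. Your overall skeleton is the standard one and its preliminary steps are fine: the reduction to $A\ge 0$ (with the log-convexity of $n\mapsto\|A^nx\|$ to preserve Stieltjes vectors), the criterion that a nonnegative symmetric operator is essentially self-adjoint iff $\ker(A^*+I)=\{0\}$ (the paper's Theorem \ref{EAO}), the fact that $a_n=\langle A^nx,x\rangle$ is a Stieltjes moment sequence, and the Cauchy--Schwarz passage from $\sum\|A^nx\|^{-1/(2n)}=\infty$ to the Stieltjes--Carleman condition $\sum a_n^{-1/(2n)}=\infty$. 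The final duality computation $\langle x,\phi\rangle=\lim_k\langle u_k,(A^*+1)\phi\rangle=0$ is also correct \emph{provided} the cyclic restriction $A_x$ is essentially self-adjoint.

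That is exactly where the gap is. The ``classical correspondence'' you invoke pairs essential self-adjointness of the Jacobi (cyclic) operator with determinacy of the \emph{Hamburger} moment problem, whereas what you have established via Carleman's Stieltjes criterion is determinacy of the \emph{Stieltjes} problem. These are not equivalent: there exist Stieltjes-determinate but Hamburger-indeterminate moment sequences, and for those the operator on $\mathrm{span}\{A^nx\}$ is \emph{not} essentially self-adjoint, so the inference as written is invalid. Nor can you fall back on Carleman's Hamburger criterion, since your hypothesis only gives $\sum a_{2n}^{-1/(4n)}=\infty$, strictly weaker than $\sum a_{2n}^{-1/(2n)}=\infty$. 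The conclusion you want is in fact reachable from the Stieltjes--Carleman condition, but it requires a genuine argument, for instance: pass to the symmetric measure $\tilde\mu$ on $\mathbb{R}$ whose even moments are $a_n$ (so that the Hamburger--Carleman condition holds for $\tilde\mu$), check that Carleman's condition survives multiplying the measure by $(1+x^2)^k$, and use M.~Riesz's theorem (a measure is Hamburger-determinate iff polynomials are dense in $L^2((1+x^2)\,d\mu)$) to pull density of polynomials back to $L^2((1+t^2)\,d\mu_x)$, which yields Hamburger determinacy and hence essential self-adjointness of $A_x$; alternatively one follows Simon's or Nussbaum's actual route. This bridge is precisely where semiboundedness is used and where the exponent $1/(2n)$ (rather than Nussbaum's $1/n$) is paid for; you flagged it as the delicate point, but the fact you propose to cite (``determinate $\Leftrightarrow$ essentially self-adjoint Jacobi operator'') is not the fact you need at that stage, so as it stands the proof is incomplete at its crux.
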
 
A very nice simplified proof this theorem can be found in  Simon \cite{BS}. In 1975, P.R.Chernoff used this result to prove an $L^2$-version of the classical Denjoy-Carleman theorem regarding quasi-analytic functions on $\mathbb{R}^n.$

The above theorem talks about essential self adjointness of operators. Let us quickly recall some relevant definitions from operator theory. By an operator $A$ on a Hilbert space $\mathcal{H}$ we mean a linear mapping whose domain $D(A)$ is a subspace of $\mathcal{H}$ and whose range $Ran(A)\subset \mathcal{H}$. We say that an operator $S$ is an extension of $A$ if $D(A)\subset D(S)$ and $Sx=Ax $ for all $  x\in D(A)$. An operator $A$ is called closed if the graph of $A$ defined by $\mathcal{G}=\{(x,Ax):x\in D(A)\}$ is a closed subset of $ \mathcal{H} \times \mathcal{H}.$  We say that an operator $A$ is closable if it has a closed extension. Every closable operator has a smallest closed extension, called its closure, which we denote by $\bar{A}.$ An operator $A$ is said to be densely defined if $D(A)$ is dense in $\mathcal{H}$ and it is called symmetric if $\langle Ax,y\rangle = \langle x,Ay \rangle$ for all $x,y\in D(A)$. A densely defined symmetric operator $A$ is called essentially self adjoint if its closure $\bar{A}$ is self adjoint. It is easy to see that an operator $A$ is essentially self adjoint if and only if $A$ has unique self adjoint extension. The following is a very important characterization of essentially self adjoint operators. 
\begin{thm}(\cite{RB})\label{EAO}
	Let $A$ be a positive, densely defined symmetric operator. The followings are equivalent: (i) $A$ is essentially self adjoint (ii)  $Ker(A^*+I)=\{0\}$ and (iii)  $Ran(A+I)$ is dense in $\mathcal{H}.$
\end{thm}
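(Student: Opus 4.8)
The plan is to run the three implications around a single real spectral shift, $A+I$, which is available precisely because $A$ is positive. The backbone of the argument is the elementary range--kernel duality: for any densely defined operator $T$ one has $Ran(T)^\perp = Ker(T^*)$, and applied to $T=A+I$ (so that $T^*=A^*+I$, since $I$ is bounded and self adjoint) this reads $Ran(A+I)^\perp = Ker(A^*+I)$. From this the equivalence of (ii) and (iii) is immediate, since $Ran(A+I)$ is dense in $\mathcal{H}$ if and only if $Ran(A+I)^\perp=\{0\}$, which is exactly $Ker(A^*+I)=\{0\}$. It therefore remains to prove (i) $\Leftrightarrow$ (iii).

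The positivity hypothesis enters through the basic estimate
\[ \|(A+I)x\|^2 = \|Ax\|^2 + 2\langle Ax,x\rangle + \|x\|^2 \geq \|x\|^2, \qquad x\in D(A), \]
where the middle term is real because $A$ is symmetric and nonnegative because $A$ is positive. This shows that $A+I$ is bounded below, and the same estimate persists for the closure $\overline{A+I}=\bar{A}+I$. Since a closed operator that is bounded below has closed range, I would first record the auxiliary fact $\overline{Ran(A+I)} = Ran(\bar{A}+I)$: one inclusion follows by approximating $x\in D(\bar{A})$ by elements $x_n\in D(A)$ with $(A+I)x_n\to(\bar{A}+I)x$, while the reverse inclusion uses that $Ran(\bar{A}+I)$ is already closed. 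Granting this, (i) $\Rightarrow$ (iii) is quick: if $A$ is essentially self adjoint then $\bar{A}$ is self adjoint and positive, so $\bar{A}+I\geq I$ is a self adjoint operator bounded below by a positive constant and hence surjective; combined with the auxiliary fact this gives $\overline{Ran(A+I)}=Ran(\bar{A}+I)=\mathcal{H}$, which is (iii).

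The substantive direction is (iii) $\Rightarrow$ (i). Assuming $Ran(A+I)$ dense, the auxiliary fact and the closed-range property upgrade this to the genuine surjectivity $Ran(\bar{A}+I)=\mathcal{H}$, while the already-established (iii) $\Rightarrow$ (ii) gives $Ker(A^*+I)=\{0\}$. To conclude that $\bar{A}$ is self adjoint it suffices to prove $A^*\subseteq\bar{A}$, the reverse inclusion $\bar{A}\subseteq A^*$ being automatic from symmetry. Given $y\in D(A^*)$, surjectivity produces $x\in D(\bar{A})$ with $(\bar{A}+I)x=(A^*+I)y$; since $\bar{A}\subseteq A^*$ this yields $(A^*+I)(x-y)=0$, so $x-y\in Ker(A^*+I)=\{0\}$, whence $y=x\in D(\bar{A})$ and $A^*y=\bar{A}y$. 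Thus $A^*=\bar{A}$ and $A$ is essentially self adjoint.

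I expect the only real technical point to be the auxiliary identification of $\overline{Ran(A+I)}$ with the closed range $Ran(\bar{A}+I)$, and this is exactly where positivity is indispensable: for a general densely defined symmetric operator the single shift by $-1$ need not be bounded below, and one is then forced to work simultaneously with the two complex deficiency spaces $Ker(A^*\pm i)$ rather than the one real space $Ker(A^*+I)$. Everything else reduces to the duality $Ran(A+I)^\perp=Ker(A^*+I)$ and the standard fact that a positive self adjoint operator shifted by $I$ is boundedly invertible.
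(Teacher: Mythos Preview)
Your proof is correct and follows the standard textbook argument. Note, however, that the paper does not supply its own proof of this statement: Theorem~\ref{EAO} is quoted from Reed--Simon \cite{RB} as a known result and is used only as a tool in the proof of Proposition~\ref{essen}. There is therefore nothing in the paper to compare your argument against; what you have written is essentially the proof one finds in \cite{RB} (the basic criterion for essential self-adjointness, specialised from the pair of deficiency indices $Ker(A^*\pm i)$ to the single real deficiency space $Ker(A^*+I)$ using positivity).
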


We apply the above theorem to study the essential self adjointness of $ \mathcal{L} $ considered on a domain inside the Sobolev space $ W^{s,2}(\He^n), s >0. $  Let  $ A $ stand for the sublaplacian $ \mathcal{L} $ restricted to the domain $ D(A) $ consisting of all smooth functions $ f $ such that for all $ \alpha, \beta \in \mathbb{N}^n, j \in \mathbb{N} $ the derivatives $ X^\alpha Y^\beta T^j f  $ are in $ L^2(\He^n) $ and vanish at the origin. Since $ X_j,Y_j $ agree with $ \frac{\partial}{\partial x_j}, \frac{\partial}{\partial y_j}$ at the origin, we can also define $ D(A) $ in terms of ordinary derivatives $ \partial_x^\alpha \partial_y^\beta \partial_t^j.$

\begin{prop}\label{essen}  Let $ A $ and $ D(A) $ be defined as above where $ (n-1) < s \leq (n+1).$  Then $ A $ is not essentially self adjoint.
\end{prop}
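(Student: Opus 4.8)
The plan is to verify the criterion of Theorem \ref{EAO}. First I would note that $ A $ really is a positive, densely defined, symmetric operator on the Hilbert space $ W^{s,2}(\He^n) $: positivity and symmetry follow at once from the self adjointness of $ \mathcal{L}\ge 0 $ on $ L^2(\He^n) $ together with $ \langle f,g\rangle_{s}=\langle (I+\mathcal{L})^{s/2}f,(I+\mathcal{L})^{s/2}g\rangle_{2} $, while density of $ D(A) $ follows from the inclusion $ C_c^\infty(\He^n\setminus\{0\})\subset D(A) $ and the fact that $ C_c^\infty(\He^n\setminus\{0\}) $ is dense in $ W^{s,2}(\He^n) $ whenever $ s\le n+1=Q/2 $, with $ Q=2n+2 $ the homogeneous dimension of $ \He^n $ (equivalently, the origin has zero $ (s,2) $-capacity in this range, which one proves with a standard logarithmic cut-off about $ 0 $). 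Granting this, Theorem \ref{EAO} reduces the statement to exhibiting one nonzero $ g\in W^{s,2}(\He^n) $ orthogonal to $ Ran(A+I) $; then $ Ran(A+I) $ is not dense and $ A $ is not essentially self adjoint.

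The element I would take is $ g:=(I+\mathcal{L})^{-(s+1)}\delta_0 $, the Dirac mass at the origin regularised by $ s+1 $ powers of the resolvent; on the Fourier side $ \widehat g(\lambda)=(1+H(\lambda))^{-(s+1)} $, so that $ \langle\widehat g(\lambda)\Phi_\alpha^\lambda,\Phi_\beta^\lambda\rangle=(1+(2|\alpha|+n)|\lambda|)^{-(s+1)}\,\delta_{\alpha\beta} $, and plainly $ g\neq 0 $. Inserting this into the Plancherel expression for the $ W^{s,2} $-norm from Section 2 gives
$$ \|g\|_{(s)}^2=(2\pi)^{-n-1}\int_{-\infty}^{\infty}\sum_{\alpha\in\mathbb{N}^n}(1+(2|\alpha|+n)|\lambda|)^{-(s+2)}\,|\lambda|^n\,d\lambda , $$
and a routine computation — change variables $ u=(2|\alpha|+n)|\lambda| $ and sum over the Hermite levels $ |\alpha|=k $, whose multiplicity grows like $ k^{n-1} $ — shows the right side is finite exactly when $ s+2>n+1 $, that is, when $ s>n-1 $. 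So in precisely the range of the proposition, $ g\in W^{s,2}(\He^n) $.

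It remains to check $ \langle (A+I)f,g\rangle_{s}=0 $ for all $ f\in D(A) $. Any such $ f $ has $ \mathcal{L}^m f\in L^2(\He^n) $ for every $ m $, hence belongs to every Sobolev space $ W^{r,2}(\He^n) $, which legitimises the following manipulation. On the Fourier side $ (I+\mathcal{L})^{a} $ is multiplication by $ (1+(2|\alpha|+n)|\lambda|)^{a} $ and $ \widehat{(A+I)f}(\lambda)=\widehat f(\lambda)(I+H(\lambda)) $, so the three power factors appearing in $ \langle (A+I)f,g\rangle_{s} $ — one of exponent $ s $ from the Sobolev inner product, one of exponent $ 1 $ from $ A+I $, and one of exponent $ -(s+1) $ from $ g $ — multiply to $ 1 $, leaving
$$ \langle (A+I)f,g\rangle_{s}=\int_{-\infty}^{\infty}\sum_{\alpha\in\mathbb{N}^n}\langle\widehat f(\lambda)\Phi_\alpha^\lambda,\Phi_\alpha^\lambda\rangle\,d\mu(\lambda)=\int_{-\infty}^{\infty}tr(\widehat f(\lambda))\,d\mu(\lambda)=f(0)=0 , $$
using the inversion formula at the origin (where $ \pi_\lambda(0,0)=I $) and the fact that $ f(0)=0 $. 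Hence $ g\perp Ran(A+I) $, and Theorem \ref{EAO} yields that $ A $ is not essentially self adjoint.

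As for the obstacle: once everything is transported to the Fourier side the computation is mechanical, since every operator in sight becomes a multiplication and the powers telescope. The genuine content is concentrated in the two endpoint conditions on $ s $: the upper bound $ s\le n+1=Q/2 $ is exactly what makes $ D(A) $ dense in $ W^{s,2}(\He^n) $, so that $ A $ is a bona fide densely defined symmetric operator and Theorem \ref{EAO} can be applied at all; and $ s>n-1 $ is exactly the condition for the regularised Dirac mass $ (I+\mathcal{L})^{-(s+1)}\delta_0 $ to lie in $ W^{s,2}(\He^n) $. The one step I expect to need a careful write-up is the density of $ C_c^\infty(\He^n\setminus\{0\}) $ in $ W^{s,2}(\He^n) $ for $ s\le Q/2 $. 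I would also note that the argument uses only $ f(0)=0 $, not the vanishing of the higher order derivatives, so the same $ g $ in fact certifies the failure of essential self adjointness of $ \mathcal{L} $ on any larger domain of smooth $ L^2 $-functions that merely vanish at the origin.
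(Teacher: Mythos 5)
Your treatment of the non-density of $\mathrm{Ran}(A+I)$ is essentially the paper's own argument: you use the same witness $g$ with $\widehat g(\lambda)=(1+H(\lambda))^{-(s+1)}$ (the paper writes it directly on the Fourier side rather than as a regularised Dirac mass, but it is the same element), the same Plancherel computation showing $g\in W^{s,2}(\He^n)$ exactly when $s>n-1$ (the paper phrases the multiplicity count through $\|P_k(\lambda)\|_{HS}^2\le C(2k+n)^{n-1}$, which is your sum over Hermite levels), and the same telescoping of powers reducing $\langle (A+I)f,g\rangle_s$ to $\int \mathrm{tr}(\widehat f(\lambda))\,d\mu(\lambda)=f(0)=0$; your observation that only $f(0)=0$ is needed there is accurate. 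Where you genuinely diverge is the density of $D(A)$ in $W^{s,2}(\He^n)$: the paper proves this (Lemma \ref{Density}) by duality --- a functional annihilating $C_c^\infty(\He^n\setminus\{0\})$ is a distribution supported at the origin, hence a finite combination $\sum c_{a,b,j}\,Z^a\bar{Z}^bT^j\delta$, and an explicit Fourier-side computation after the rescaling $\lambda\mapsto(2|\alpha|+n)^{-1}\lambda$ shows no nonzero such combination can lie in $W^{-s,2}(\He^n)$ when $s\le n+1$ --- whereas you appeal to a zero-capacity/logarithmic cut-off argument and yourself flag it as needing a careful write-up. That flagged step is the real content of this half: a cut-off proof needs boundedness of multiplication by the cut-offs on the fractional sublaplacian Sobolev space up to the endpoint $s=Q/2=n+1$, which is not off-the-shelf in this sub-Riemannian setting; moreover, zero capacity of the point only excludes $\delta$ itself, while the annihilator a priori contains derivatives of $\delta$, so one must still rule out nontrivial linear combinations --- precisely what the paper's computation does. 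So your proposal is correct in outline and coincides with the paper where it is complete, but to close it you would in effect have to reprove Lemma \ref{Density} or import a genuine capacity theory for $W^{s,2}(\He^n)$; the paper's duality route buys you the endpoint $s=n+1$ with only the structure theorem for point-supported distributions and an elementary integral estimate.
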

\begin{proof} In view of Theorem \ref{EAO} it is enough to show that for $ s $ as in the statement of the proposition,  $ D(A) $ is dense in $ W^{s,2}(\He^n) $ but $ (I+A) D(A) $ is not. These are proved in the following lemmas.
\end{proof}

\begin{lem}\label{Density}  $ D(A) $ is dense in $ W^{s,2}(\He^n) $ for any $ 0 \leq s \leq (n+1). $ 
\end{lem}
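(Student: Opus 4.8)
The natural approach is to exploit the duality between $W^{s,2}(\He^n)$ and $W^{-s,2}(\He^n)$ set up above. Since $C_c^\infty(\He^n)$ is, by construction, dense in $W^{s,2}(\He^n)$, and since any $\phi\in C_c^\infty(\He^n)$ vanishing near the origin lies in $D(A)$ (it is smooth, all its derivatives are compactly supported hence in $L^2$, and they vanish at $0$), it suffices to prove that $C_c^\infty(\He^n\setminus\{0\})$ is dense in $W^{s,2}(\He^n)$. By Hahn--Banach this amounts to showing that any $g\in W^{-s,2}(\He^n)$ with $(f,g)=0$ for all $f\in C_c^\infty(\He^n\setminus\{0\})$ is zero. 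For $s>0$ such a $g$ is a distribution on $\He^n\cong\R^{2n+1}$ (as recorded before Remark \ref{rem}) which vanishes on the open set $\He^n\setminus\{0\}$; by the estimate $|(f,g)|\le\|g\|_{(-s)}\|(I+\mathcal{L})^mf\|_2$ noted above it has finite order, so, being supported at the origin, $g=\sum_{|\gamma|\le N}c_\gamma\partial^\gamma\delta_0$ for some $N$ and constants $c_\gamma$. (The case $s=0$ is the trivial remark that no nonzero distribution supported at a point is square integrable.)

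To rule out $g\neq0$ I would argue according to the homogeneous order $d^\ast=\max\{d(\gamma):c_\gamma\neq0\}$, where $d(\gamma)=|\gamma_z|+2\gamma_t$ is the degree of $\partial^\gamma$ with respect to the dilations $\delta_r(z,t)=(rz,r^2t)$. Suppose $d^\ast\ge1$. Then $P=\sum_{d(\gamma)=d^\ast}c_\gamma(-1)^{|\gamma|}\partial^\gamma$ is a nonzero constant coefficient differential operator, so one may pick $\phi\in C_c^\infty(\He^n)$ with $(P\phi)(0)\neq0$. Writing $\phi_r:=\phi\circ\delta_{1/r}$, we have $\langle g,\phi_r\rangle=\sum_\gamma c_\gamma(-1)^{|\gamma|}r^{-d(\gamma)}(\partial^\gamma\phi)(0)$, so $|\langle g,\phi_r\rangle|\gtrsim r^{-d^\ast}\to\infty$ as $r\to0^+$. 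On the other hand, from $\mathcal{L}(u\circ\delta_{1/r})=r^{-2}(\mathcal{L}u)\circ\delta_{1/r}$ and $\|u\circ\delta_{1/r}\|_2=r^{n+1}\|u\|_2$ one gets $\|(I+\mathcal{L})^m\phi_r\|_2=O(r^{n+1-2m})$, hence $\|\phi_r\|_{(n+1)}=O(1)$ (when $n$ is even one reaches this through $\|\phi_r\|_{(n+1)}^2\le\|\phi_r\|_{(n)}\,\|\phi_r\|_{(n+2)}$, using the two even-order norms). Since $s\le n+1$, the bound $|(f,g)|\le\|f\|_{(s)}\|g\|_{(-s)}$ then gives $|\langle g,\phi_r\rangle|\le\|\phi_r\|_{(n+1)}\|g\|_{(-s)}=O(1)$, a contradiction. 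Hence $d^\ast=0$, i.e. $g=c\,\delta_0$.

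It remains to show $\delta_0\notin W^{-s,2}(\He^n)$ for $s\le n+1$. Here $\widehat{\delta_0}(\lambda)=\pi_\lambda(0,0)=I$, in agreement with the distributional definition from Remark \ref{rem} since $e_{\alpha,\beta}^\lambda(0)=\langle\pi_\lambda(0,0)\Phi_\alpha^\lambda,\Phi_\beta^\lambda\rangle=\delta_{\alpha\beta}$. If $\delta_0$ lay in $W^{-s,2}(\He^n)$, the identity \eqref{norm-remark} would force
\begin{equation*}
\int_{-\infty}^{\infty}\sum_{\alpha\in\mathbb{N}^n}\bigl(1+(2|\alpha|+n)|\lambda|\bigr)^{-s}\,d\mu(\lambda)<\infty .
\end{equation*}
Grouping the $\alpha$-sum by $|\alpha|=k$ and using $\binom{k+n-1}{n-1}\asymp k^{n-1}$, the inner sum is $+\infty$ for every $\lambda\neq0$ when $s\le n$, and behaves like $|\lambda|^{-s}$ as $|\lambda|\to\infty$ when $s>n$; since $d\mu(\lambda)\asymp|\lambda|^n\,d\lambda$, the integral diverges for every $s\le n+1$ --- at infinity like $\int|\lambda|^{n-s}\,d\lambda$, logarithmically divergent exactly at the endpoint $s=n+1$. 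This contradiction gives $c=0$, hence $g=0$, and the density follows.

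The step I expect to demand the most care is the endpoint $s=n+1$, where the statement is sharp: both the scaling bound $\|\phi\circ\delta_{1/r}\|_{(n+1)}=O(1)$ --- subtle when $n$ is even, since $(I+\mathcal{L})^{(n+1)/2}$ is then not a differential operator --- and the borderline divergence $\int^{\infty}|\lambda|^{-1}\,d\lambda=\infty$ live precisely at that exponent. Reducing a general distribution supported at $\{0\}$ to the single term $c\,\delta_0$ via the dilation argument is the other point that needs to be handled with care.
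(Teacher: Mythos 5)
Your proposal is correct, and while it shares the paper's skeleton (reduce to density of $C_c^\infty(\He^n\setminus\{0\})$, apply Hahn--Banach, use the finite-order estimate to conclude that an annihilating $g\in W^{-s,2}(\He^n)$ is a finite combination of derivatives of $\delta_0$), the key elimination step is genuinely different. The paper proceeds entirely on the Fourier side: it writes $g=\sum c_{a,b,j}Z^a\overline{Z}^bT^j\delta$, computes $\langle \widehat{g}(\lambda)\Phi_\alpha^\lambda,\Phi_\beta^\lambda\rangle$ via the relations (\ref{Frel}) and the explicit action of the creation/annihilation operators, and after the change of variables $\lambda\mapsto(2|\alpha|+n)^{-1}\lambda$ shows that the finiteness condition (\ref{norm-remark}) fails for $s\le n+1$ unless every coefficient vanishes. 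You instead eliminate all terms of positive homogeneous degree by a soft space-side argument: testing $g$ against the non-isotropically dilated functions $\phi\circ\delta_{1/r}$, whose $W^{n+1,2}$-norms stay bounded by the homogeneity $\mathcal{L}\circ\delta_r=r^2\delta_r\circ\mathcal{L}$ and the $L^2$ scaling (with the Cauchy--Schwarz interpolation $\|\cdot\|_{(n+1)}^2\le\|\cdot\|_{(n)}\|\cdot\|_{(n+2)}$ handling even $n$), while the distributional pairing blows up like $r^{-d^\ast}$; only the residual term $c\,\delta_0$ requires a Fourier computation, which is immediate since $\widehat{\delta_0}(\lambda)=I$ and the eigenvalue multiplicities give divergence of the weighted integral for $s\le n+1$. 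Your route buys a cleaner conceptual explanation of the threshold (it is the half homogeneous dimension $Q/2=n+1$ entering through scaling, plus the borderline $\int^\infty|\lambda|^{n-s}d\lambda$ for $\delta_0$ alone) and avoids the explicit bookkeeping of the coefficients $C_{a,b,j}(\alpha,\beta)$; the paper's computation, on the other hand, treats all terms at once and exhibits exactly how each monomial contributes to the divergence. Both arguments are complete and use only facts already established in Section 2.
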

\begin{proof} If we let  $ \Omega= \mathbb{H}^n\setminus\{0\}$ so that $ C_c^\infty(\Omega)  \subset D(A) ,$  it is enough to show that the smaller set is  dense in $ W^{s,2}(\mathbb{H}^n).$  This will follow if  we can show that the only linear functional that annihilates  $ C_c^{\infty}(\Omega)$ is the zero functional (see chapter 3 of \cite{WR}). Let $\Lambda \in  (W^{s,2}(\He^n))' $, the dual of $W^{s,2}(\He^n)$,  be such that $\Lambda ( C_c^{\infty}(\Omega))=0.$ Then there exists $g\in W^{-s,2}(\He^n) $ such that $ \Lambda = \Lambda_g$ and  hence  $  \Lambda_{g}(\phi)= 0 $ for any  $ \phi \in  C_c^{\infty}(\Omega)$   
Notice that for $\phi \in C_c^{\infty}(\mathbb{H}^n)$ the linear map $\phi \rightarrowtail \Lambda_{g}(\phi)$ defines a distribution. Indeed, the estimate
$$ |\Lambda_g(\phi)| \leq \|g\|_{(-s)} \|\phi\|_{(s)} \leq \|g\|_{(-s)} \| (I+\mathcal{L})^m\phi \|_2 $$
for any integer $ m > s/2 $ shows that it is indeed a distribution. As it vanishes on $ \Omega $ it is supported at the origin. The structure theory of such distributions allow us to conclude that 
$ \Lambda_g $ is a finite linear combination of derivatives of Dirac $ \delta $ at the origin, $\Lambda_{g}=\sum_{|a|\leq N}c_{a}\partial^{a}\delta,$ see e.g Chapter 6 of \cite{WR}.

Since $ X^a \delta = \partial_x^a \delta ,$ and $ Y^b \delta = \partial_y^b \delta $ in the above representation we can also use $ X^a Y^bT^j.$ It is even more convenient to write them in terms of the complex vector fields defined by $ Z_j = \frac{1}{2} (X_j-iY_j), \overline{Z}_j = \frac{1}{2}(X_j+iY_j).$ Thus we have 
$ g = \sum_{|a|+|b|+2j \leq N} c_{a,b,j} Z^a \overline{Z}^b T^j \delta.$ If $ g_0 \in L^2(\He^n) $ is such that $ (I+\mathcal{L})^{-s/2}g = g_0 $ then by (\ref{norm-remark}) we have
$$  \int_{-\infty}^\infty  \sum_{\alpha \in \mathbb{N}^n}\sum_{\beta \in \mathbb{N}^n} (1+(2|\alpha|+n)|\lambda|)^{-s} | \langle \widehat{g}(\lambda)\Phi_\alpha^\lambda, \Phi_\beta^\lambda \rangle |^2  d\mu(\lambda) < \infty.$$
Since $ g $ is compactly supported we can calculate the Fourier transform of $ g $ as in  Remark \ref{rem}. In view of the relations \ref{Frel} we have
$$ \langle \widehat{g}(\lambda) \Phi_\alpha^\lambda,\Phi_\beta^\lambda \rangle = \sum_{|a|+|b|+2j \leq N} c_{a,b,j} \lambda^j   \langle A(\lambda)^a (A(\lambda)^\ast)^b \Phi_\alpha^\lambda, \Phi_\beta^\lambda \rangle$$
By defining $ m(\lambda,\alpha, \beta) $ to be the expression on the right hand side of the above equation we  see that
\begin{equation}\label{express}  \int_{-\infty}^\infty  \sum_{\alpha \in \mathbb{N}^n}\sum_{\beta \in \mathbb{N}^n} (1+(2|\alpha|+n)|\lambda|)^{-s}  | m(\lambda,\alpha,\beta)|^2 |\lambda|^n d\lambda < \infty.
\end{equation}
The action of $ A(\lambda)^a $ and $ (A(\lambda)^\ast)^b$ on $ \Phi_\alpha^\lambda $ are explicitly known, see (\cite{TH3}). It is therefore easy to see that 
$$ m((2|\alpha|+n)^{-1}\lambda, \alpha,\beta)  =  \sum_{|a|+|b|+2j \leq N} C_{a,b,j}( \alpha,\beta)  \lambda^{j+(|a|+|b|)/2}  $$
where the coefficients $ C_{a,b,j}(\alpha,\beta) $ are uniformly bounded in both  variables. We also remark that  for a given $ \alpha $ the function $ C_{a,b,j}(\alpha,\beta) $ is non zero only for a single value of $ \beta.$
By making a change of variables in (\ref{express}) we see that
$$   \sum_{\alpha \in \mathbb{N}^n}\sum_{\beta \in \mathbb{N}^n} (2|\alpha|+n)^{-n-1}  \int_{-\infty}^\infty  \big( \sum_{|a|+|b|+2j \leq N} C_{a,b,j}( \alpha,\beta)  \lambda^{j+(|a|+|b|)/2} \big)^2 \frac{|\lambda|^n}{(1+|\lambda|)^s}  d\lambda < \infty.$$
As we are assuming that $  0 \leq s \leq (n+1)$ the above integral cannot be finite unless all the coefficients $ c_{a,b,j} =0.$ Hence $ g =0 $ proving the density of $ D(A).$
\end{proof}

\begin{lem} For any $ s > (n-1),  (I+A)D(A) $ is not dense in $ W^{s,2}(\He^n).$
\end{lem}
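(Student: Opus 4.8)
The plan is to exhibit, by Theorem \ref{EAO}, a nonzero element $u \in \big((I+A)D(A)\big)^\perp$ inside $W^{s,2}(\He^n)$; equivalently, in view of the identification of $W^{s,2}(\He^n)$ with $W^{-s,2}(\He^n)$ through the duality bracket, a nonzero $g \in W^{-s,2}(\He^n)$ annihilating $(I+\mathcal{L})f$ for every $f \in D(A)$. Formally this says $(I+\mathcal{L})g = 0$ in the sense of distributions, but since $f$ and all its derivatives must vanish at $0$, $g$ is only required to satisfy $(I+\mathcal{L})g = c\,\delta$ (or a combination of derivatives of $\delta$) for some constants; concretely, I would try to produce $g$ as (a suitable regularization of) a fundamental-type solution of $(I+\mathcal{L})g = \delta$, i.e. the resolvent kernel $g(z,t) = (I+\mathcal{L})^{-1}\delta$ evaluated off the origin. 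The key point to check is that this kernel, which is smooth away from $0$ and has a controlled singularity at $0$, in fact lies in $W^{-s,2}(\He^n)$ precisely when $s > n-1$.

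First I would write $g$ spectrally: on the Fourier transform side $\widehat{g}(\lambda) = (I+H(\lambda))^{-1}$, so that $\langle \widehat{g}(\lambda)\Phi_\alpha^\lambda,\Phi_\beta^\lambda\rangle = (1+(2|\alpha|+n)|\lambda|)^{-1}\delta_{\alpha\beta}$. Then by the explicit norm formula for $W^{-s,2}(\widehat{\He}^n)$ recorded in the Sobolev subsection, membership of $g$ in $W^{-s,2}(\He^n)$ amounts to
\[
\int_{-\infty}^\infty \sum_{\alpha \in \mathbb{N}^n} \big(1+(2|\alpha|+n)|\lambda|\big)^{-s}\,\big(1+(2|\alpha|+n)|\lambda|\big)^{-2}\,|\lambda|^n\,d\lambda < \infty.
\]
Carrying out the $\lambda$-integral after the change of variables $\mu = (2|\alpha|+n)|\lambda|$ produces a factor $(2|\alpha|+n)^{-n-1}$ times a convergent one-dimensional integral $\int_0^\infty (1+\mu)^{-s-2}\mu^n\,d\mu$ (convergent at infinity since $s+2 > n+1$), and then the sum over $\alpha$ behaves like $\sum_\alpha (2|\alpha|+n)^{-n-1-s}$, which converges for all $s>0$. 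Hence $g \in W^{-s,2}(\He^n)$ for every $s>0$, and in particular for $s>(n-1)$; I should double-check whether additional care is needed so that the constructed $g$ genuinely satisfies $(I+\mathcal{L})g = \delta$ as a distribution and not merely on a dense subspace, but the compactly-supported-distribution machinery of Remark \ref{rem} applied to a cutoff of $g$ handles this.

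Next I must verify $g \perp (I+A)D(A)$: for $f \in D(A)$, the duality bracket $(\,(I+\mathcal{L})f,\,g\,)$ equals $\langle (I+\mathcal{L})f,\,(I+\mathcal{L})^{-1}\delta\rangle$, which formally is $\langle f,\delta\rangle = f(0) = 0$ since every $f \in D(A)$ vanishes at the origin. Making this rigorous is where the main work lies: one has to move the operator $(I+\mathcal{L})$ across the duality pairing between $W^{s,2}$ and $W^{-s,2}$, which is legitimate because $(I+\mathcal{L})^{s/2}$ intertwines everything with $L^2$, and then interpret $\langle f,\delta\rangle$ correctly — this requires that $f$, a priori only an $L^2$ Sobolev function, has a genuine value at $0$. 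That is exactly why the regime $s > (n-1)$ (more precisely, one also wants $s$ not too large for the companion density lemma, giving the window $(n-1)<s\le n+1$ in Proposition \ref{essen}) is the right one: for such $s$ the relevant point evaluations and the singularity of the resolvent kernel at $0$ are matched. I expect the main obstacle to be precisely this bookkeeping — confirming that $g$ is a bona fide element of $W^{-s,2}$ that is orthogonal to the closure of $(I+A)D(A)$ and is not itself in that closure — rather than any delicate estimate, since the estimates reduce to the elementary one-dimensional integral and $\alpha$-sum above. Once such a $g \ne 0$ is produced, $(I+A)D(A)$ fails to be dense, completing the proof; combined with Lemma \ref{Density} this establishes Proposition \ref{essen}.
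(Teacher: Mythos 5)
Your proposal is correct and follows essentially the paper's own argument: your annihilating element $g=(I+\mathcal{L})^{-1}\delta$ with $\widehat{g}(\lambda)=(1+H(\lambda))^{-1}$, viewed in $W^{-s,2}(\He^n)$ through the duality bracket, corresponds under the identification $(I+\mathcal{L})^{-s}\colon W^{-s,2}(\He^n)\to W^{s,2}(\He^n)$ exactly to the paper's vector with $\widehat{g}(\lambda)=(1+H(\lambda))^{-s-1}$, the vanishing on $(I+A)D(A)$ reduces in both cases to the inversion formula $\int_{-\infty}^{\infty}\mathrm{tr}(\widehat{f}(\lambda))\,d\mu(\lambda)=f(0)=0$, and the convergence estimate is the same $\sum_{k}(2k+n)^{-2}\int(1+|\lambda|)^{-s-2}|\lambda|^{n}\,d\lambda$, finite precisely because $s>n-1$. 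Only two small blemishes: your aside that $g\in W^{-s,2}(\He^n)$ ``for every $s>0$'' contradicts your own computation (the $\lambda$-integral needs $s+2>n+1$, which is exactly where the hypothesis enters), and no point-evaluation issue for $f$ arises since $D(A)$ consists of smooth functions all of whose derivatives vanish at the origin, so the condition $s>n-1$ is needed only for the membership of $g$.
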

\begin{proof} For any $ f \in D(A)$ the inversion formula for the Fourier transform on $ \He^n $ shows that 
$$ \int_{-\infty}^\infty  tr( \widehat{f}(\lambda)) d\mu(\lambda) = f(0) = 0.$$
Let $ g $ be the functions defined by $ \widehat{g}(\lambda) =  (1+H(\lambda))^{-s-1}$ we can rewrite the above as 
$$ \langle (I+\mathcal{L})f, g\rangle_s =  \int_{-\infty}^\infty  tr( \widehat{f}(\lambda)) d\mu(\lambda) = 0.$$
So all we need to do is to check $ g \in W^{s,2}(\He^n),$ or equivalently
$$  \int_{-\infty}^\infty \big(  \sum_{k=0}^\infty (1+(2k+n)|\lambda|)^{-s-2} \| P_k(\lambda)\|_{HS}^2 \big)  |\lambda|^n d\lambda< \infty.$$
It is known that  $ \|P_k(\lambda)\|_{HS}^2 = \frac{(k+n-1)!}{k!(n-1)!} \leq C (2k+n)^{n-1} $ and so by making a change of variables the above integral is bounded by
$$  \sum_{k=0}^\infty  (2k+n)^{-2} \int_{-\infty}^\infty (1+|\lambda|)^{-s-2} |\lambda|^n d\lambda.$$
As we assume that $ s > (n-1) $ the integral is finite which proves that $ g \in W^{s,2}(\He^n).$ Hence the lemma.
\end{proof}  

We now proceed to investigate some properties of the set $ D_{St}(A) $ of Stieltjes vectors for the operator $ A.$  The following lemma about series of real numbers will be helpful in proving some properties of Stieltjes vectors for the sublaplacian (see lemma 3.2 of \cite{CH1}).
\begin{lem}\label{series}
If $\{M_n\}_n$ is sequence of non-negetive real numbers such that $\sum_{n=1}^{\infty}M_n^{-\frac{1}{n}}=\infty$ and $0\leq K_n\leq aM_n+b^n$, then $\sum_{n=1}^{\infty}K_n^{-\frac{1}{n}}=\infty.$
\end{lem}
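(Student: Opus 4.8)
The plan is to read off the divergence of $\sum_{n}K_n^{-1/n}$ directly from the pointwise bound $0\le K_n\le aM_n+b^n$, splitting the index set $\mathbb{N}$ according to which of the two terms $aM_n$ and $b^n$ dominates. First I would dispose of the degenerate cases: if some $K_n=0$ then the $n$-th term is $+\infty$ and there is nothing to prove; if $a=0$ then $K_n\le b^n$ gives $K_n^{-1/n}\ge(\max(1,b))^{-1}$ for every $n$; and if $b=0$ then $K_n\le aM_n$ gives $K_n^{-1/n}\ge(\max(1,a))^{-1}M_n^{-1/n}$, so in both of these cases the conclusion is immediate from the hypothesis on $\{M_n\}$. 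Hence I may assume $a,b>0$ and $K_n>0$ for all $n$. The only elementary fact I use repeatedly is that for $x>0$ and any integer $n\ge 1$ one has $\min(1,x)\le x^{1/n}\le\max(1,x)$; this is precisely what converts the $n$-dependent factors $(2a)^{1/n}$ and $(2b)^{1/n}$ into constants independent of $n$.

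Next I would set $S=\{n\in\mathbb{N}: b^n\ge aM_n\}$ and argue by cases. \textbf{Case 1: $S$ is infinite.} For $n\in S$ we have $aM_n+b^n\le 2b^n$, hence $K_n^{-1/n}\ge(2b^n)^{-1/n}=2^{-1/n}b^{-1}\ge(2\max(1,b))^{-1}$, a fixed positive constant; summing over $n\in S$ already yields $\sum_{n}K_n^{-1/n}=\infty$. \textbf{Case 2: $S$ is finite,} say $S\subset\{1,\dots,N_0\}$. Then for every $n>N_0$ we have $b^n<aM_n$ (so in particular $M_n>0$) and therefore $aM_n+b^n<2aM_n$, which gives $K_n^{-1/n}\ge(2aM_n)^{-1/n}=(2a)^{-1/n}M_n^{-1/n}\ge(\max(1,2a))^{-1}M_n^{-1/n}$. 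Since $\sum_{n}M_n^{-1/n}=\infty$ by hypothesis and deleting the finitely many terms with $n\le N_0$ does not affect divergence, I conclude $\sum_{n>N_0}K_n^{-1/n}\ge(\max(1,2a))^{-1}\sum_{n>N_0}M_n^{-1/n}=\infty$.

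I do not anticipate a genuine obstacle here: the whole argument is a short case analysis once the bound $K_n\le aM_n+b^n$ is in hand, and it is essentially the argument used for Lemma 3.2 of \cite{CH1}. The only points deserving a moment's attention are making the multiplicative constants uniform in $n$ — handled by $x^{1/n}\le\max(1,x)$ — and the bookkeeping for the degenerate possibilities $a=0$, $b=0$, or a vanishing $K_n$, all of which are trivial.
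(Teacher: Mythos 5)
Your case analysis is correct: the constants are made uniform in $n$ exactly as needed, the degenerate cases are handled, and both cases (infinitely many $n$ with $b^n\ge aM_n$, or only finitely many) deliver the divergence. The paper itself gives no proof of this lemma — it simply cites Lemma 3.2 of Chernoff \cite{CH1} — and your argument is essentially that standard one (compare $aM_n+b^n$ with $2\max(aM_n,b^n)$ and split accordingly), so there is nothing to add.
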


 For $r>0 $  the non-isotropic dilation of $f$ is defined by by $\delta_rf(z,t)=f(rz,r^2t)$ and for $\sigma\in U(n)$ we define the rotation $R_{\sigma}f(z,t)=f(\sigma z,t)$ for all $(z,t)\in \mathbb{H}^n.$ 

\begin{lem}\label{stiel} Suppose $ f \in D(A) $ satisfies the condition $ \sum_{m=0}^\infty \| \mathcal{L}^m f\|_2^{-\frac{1}{2m} } = \infty .$ Then $ f \in D_{St}(A) .$   Moreover, $ \delta_r f, R_\sigma f $ are also Stieltjes vectors for $ A.$
\end{lem}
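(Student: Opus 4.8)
The plan is to deduce the Stieltjes property for $A$ on $W^{s,2}(\He^n)$ from the given $L^2$ hypothesis by a spectral comparison of the $W^{s,2}$-norm with the $L^2$-norm; the one genuinely delicate point will be that the Sobolev exponent forces a fixed shift of the summation index, and one must know that such a shift does not destroy the divergence of a Carleman-type series. I would first dispose of the degenerate case: if $\mathcal{L}^{m_0}f=0$ for some $m_0\ge 1$ (in particular if $f=0$), then $\|A^mf\|_{(s)}=0$ for all $m\ge m_0$ and the Stieltjes sum for $A$ is trivially infinite, so from now on assume $a_m:=\|\mathcal{L}^mf\|_2>0$ for every $m$. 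Since the sublaplacian is essentially self-adjoint on $C_c^\infty(\He^n)$ and $f$, together with all its vector-field derivatives, lies in $L^2$, the function $f$ is a $C^\infty$-vector for the self-adjoint $\mathcal{L}$; moreover $\mathcal{L}^kf\in D(A)$ for every $k$, because $\mathcal{L}^k$ is built from the left-invariant vector fields $X_i,Y_i$ with constant coefficients, so the defining conditions of $D(A)$ for $\mathcal{L}^kf$ reduce, via the Heisenberg commutation relations and the centrality of $T$, to those for $f$. Hence $f\in\bigcap_nD(A^n)$ and $A^mf=\mathcal{L}^mf$. Writing the spectral resolution $\mathcal{L}=\int_0^\infty\mu\,dE_\mu$ and setting $d\nu(\mu)=d\langle E_\mu f,f\rangle$, a finite positive measure, one has $a_m^2=\int_0^\infty\mu^{2m}\,d\nu(\mu)$ and, since $\widehat{\mathcal{L}f}(\lambda)=\widehat{f}(\lambda)H(\lambda)$,
\[
\|A^mf\|_{(s)}^2=\bigl\|(I+\mathcal{L})^{s/2}\mathcal{L}^mf\bigr\|_2^2=\int_0^\infty(1+\mu)^s\mu^{2m}\,d\nu(\mu).
\]
By H\"older's inequality for $d\nu$ the sequence $m\mapsto a_m^2$ is log-convex, hence so is $m\mapsto a_m$.

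Next comes the norm comparison. Fix the integer $\ell=\lceil s/2\rceil$, so $s/2\le\ell$. Splitting at $\mu=1$ gives $(1+\mu)^{s/2}\mu^m\le 2^{s/2}\bigl(1+\mu^{m+\ell}\bigr)$ for all $\mu\ge 0$, and hence by the displayed formula
\[
\|A^mf\|_{(s)}\le 2^{(s+1)/2}\bigl(\|f\|_2+a_{m+\ell}\bigr).
\]
Since $2^{-(s+1)/(4m)}\to 1$ and $\|f\|_2^{-1/(2m)}\to 1$, this estimate yields $\|A^mf\|_{(s)}^{-1/(2m)}\ge\tfrac14\min\bigl(1,\,a_{m+\ell}^{-1/(2m)}\bigr)$ for all large $m$; so the Stieltjes sum for $A$ diverges once we know that $\sum_m a_{m+\ell}^{-1/(2m)}=\infty$ (the alternative, that $a_{m+\ell}\le 1$ for infinitely many $m$, makes the sum diverge outright).

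The main obstacle is precisely this: for the log-convex sequence $\{a_m\}$, the hypothesis $\sum_m a_m^{-1/(2m)}=\infty$ must be shown to imply $\sum_m a_{m+\ell}^{-1/(2m)}=\infty$. I would use Carleman's equivalence: for a log-convex sequence $\{c_m\}$ with $c_0>0$, $\sum_m c_m^{-1/m}=\infty$ if and only if $\sum_m c_{m-1}/c_m=\infty$; the nontrivial implication is Carleman's inequality $\sum_m(x_1\cdots x_m)^{1/m}\le e\sum_m x_m$ applied with $x_m=c_{m-1}/c_m$ (for which $x_1\cdots x_m=c_0/c_m$), and the reverse implication follows from $\bigl(c_m/c_0\bigr)^{1/m}=\bigl(\prod_{i=1}^m c_i/c_{i-1}\bigr)^{1/m}\le c_m/c_{m-1}$ since the ratios $c_i/c_{i-1}$ are non-decreasing. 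Applying this with $c_m=a_m^{1/2}$ turns the hypothesis into $\sum_m(a_{m-1}/a_m)^{1/2}=\infty$, a statement unaffected by dropping finitely many terms, so also $\sum_m(a_{m+\ell-1}/a_{m+\ell})^{1/2}=\infty$; applying Carleman's equivalence once more to the log-convex sequence $c_m=a_{m+\ell}^{1/2}$ gives $\sum_m a_{m+\ell}^{-1/(2m)}=\infty$, and with the previous step this proves $f\in D_{St}(A)$. (A more hands-on route: from $\sum_m(a_{m-1}/a_m)^{1/2}=\infty$ and the monotonicity of $a_m/a_{m-1}$ one deduces $\log(a_m/a_{m-1})=o(m)$, hence $a_{m+\ell}/a_m=e^{o(m)}$ and the two series are comparable.)

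For the last assertion, note that $\delta_r$ and $R_\sigma$ ($\sigma\in U(n)$) are automorphisms of $\He^n$ fixing the origin, so they carry left-invariant vector fields to constant-coefficient combinations of left-invariant vector fields; hence $\delta_rf$ and $R_\sigma f$ are smooth, have all derivatives vanishing at the origin, and have all vector-field derivatives in $L^2$, i.e. both belong to $D(A)$. Moreover $\mathcal{L}R_\sigma=R_\sigma\mathcal{L}$ with $R_\sigma$ unitary on $L^2$, while $\mathcal{L}\delta_r=r^2\delta_r\mathcal{L}$ and $\|\delta_rg\|_2=r^{-(n+1)}\|g\|_2$, so that
\[
\|\mathcal{L}^m R_\sigma f\|_2^{-1/(2m)}=a_m^{-1/(2m)},\qquad \|\mathcal{L}^m\delta_rf\|_2^{-1/(2m)}=r^{-1}\,r^{(n+1)/(2m)}\,a_m^{-1/(2m)}.
\]
Since $r^{(n+1)/(2m)}\to 1$, both $\sum_m\|\mathcal{L}^m(R_\sigma f)\|_2^{-1/(2m)}$ and $\sum_m\|\mathcal{L}^m(\delta_rf)\|_2^{-1/(2m)}$ diverge together with $\sum_m a_m^{-1/(2m)}$, so applying the part of the lemma already proved to $\delta_rf$ and to $R_\sigma f$ shows that these too are Stieltjes vectors for $A$.
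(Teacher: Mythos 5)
Your argument is correct, and for the final assertion about $\delta_r f$ and $R_\sigma f$ it coincides with the paper's: both use $\mathcal{L}\circ\delta_r=r^{2}\,\delta_r\circ\mathcal{L}$, $\mathcal{L}\circ R_\sigma=R_\sigma\circ\mathcal{L}$ and the scaling of the $L^2$ norm, reducing everything to the main claim. For the main claim the two proofs share the same skeleton (dominate $\|\mathcal{L}^m f\|_{(s)}$ by $L^2$ norms of powers of $\mathcal{L}$ applied to $f$, then show the resulting Carleman series still diverges), but the key steps are genuinely different. The paper uses duality and Cauchy--Schwarz, $\|\mathcal{L}^mf\|_{(s)}^2=\langle\mathcal{L}^{2m}f,(I+\mathcal{L})^{s}f\rangle\le\|(I+\mathcal{L})^{s}f\|_2\,\|\mathcal{L}^{2m}f\|_2$, so the index gets doubled, and then transfers divergence by quoting Lemma \ref{series} together with Chernoff's observation that, after normalizing $\|f\|_2=1$, the terms $\|\mathcal{L}^mf\|_2^{-1/(2m)}$ decrease, so the even-indexed subseries still diverges. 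You instead prove the spectral bound $\|\mathcal{L}^mf\|_{(s)}\le C_s\,(\|f\|_2+\|\mathcal{L}^{m+\ell}f\|_2)$ with a fixed shift $\ell=\lceil s/2\rceil$, and handle the shift via Carleman's inequality: the equivalence, for log-convex sequences, of $\sum_m c_m^{-1/m}=\infty$ and $\sum_m c_{m-1}/c_m=\infty$, the log-convexity of $a_m=\|\mathcal{L}^mf\|_2$ coming from H\"older applied to the spectral measure of $\mathcal{L}$ at $f$. Both routes are valid; the paper's is shorter because it can cite Chernoff's two lemmas, while yours is self-contained, avoids Lemma \ref{series} altogether, and isolates a reusable fact (divergence of the Carleman series is stable under a fixed index shift for log-convex sequences). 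You are also more careful than the paper on the bookkeeping points that $\mathcal{L}^k f\in D(A)$ and $A^mf=\mathcal{L}^mf$. One caveat: your parenthetical ``more hands-on route'' is not justified as stated --- divergence of $\sum_m (a_{m-1}/a_m)^{1/2}$ with non-decreasing ratios does not force $\log(a_m/a_{m-1})=o(m)$, since the ratios can jump exponentially along a sparse set of indices while long constant stretches keep the series divergent --- but this aside is not needed, as your Carleman argument is complete.
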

\begin{proof} We first recall that $ \mathcal{L} \circ \delta_r = r^2  \delta_r \circ \mathcal{L} $ and $ \mathcal{L}\circ R_\sigma = R_\sigma \circ \mathcal{L},$ see e.g \cite{TH2}. Therefore, it follows that if a function satisfies $ \sum_{m=0}^\infty \| \mathcal{L}^m f\|_2^{-\frac{1}{2m} } = \infty $ then the same is true of $ \delta_r f $ and $ R_\sigma f.$  So we only need to prove our claim for $ f;$ i.e.,  when $ f $ satisfies the above condition then we also have $ \sum_{m=0}^\infty \| \mathcal{L}^m f\|_{(s)}^{-\frac{1}{2m} } = \infty .$ To see this, we use
$$ \langle \mathcal{L}^m f,\mathcal{L}^m f \rangle_{(s)} = \langle \mathcal{L}^{2m}f,(1+\mathcal{L})^{s} f\rangle \leq \|(I+\mathcal{L})^s f\|_2 \| \mathcal{L}^{2m} f \|_2.$$
Thus we have  $ \| \mathcal{L}^m f\|_{(s)}^{-\frac{1}{2m}} \geq C^{-\frac{1}{4m}} \| \mathcal{L}^{2m}f \|_2^{-\frac{1}{4m}}$ where $ C = \|(I+\mathcal{L})^s f\|_2.$ In view of Lemma \ref{series} it is enough to prove the divergence of $ \sum_{m=0}^\infty \| \mathcal{L}^{2m} f\|_{2}^{-\frac{1}{4m} } .$ Without loss of generality we can assume that $ \|f \|_2 =1 .$ But then  $ \| \mathcal{L}^mf\|^{-\frac{1}{2m} }$ is a decreasing function of $m,$  see Lemma 2.1 in \cite{CH1}. Consequently, the required divergence follows from the assumption on $ f.$
\end{proof}

Before stating the next lemma, let us recall some properties of the matrix coefficients $ e_{\alpha,\beta}^\lambda(z,t) =\langle \pi_\lambda(z,t) \Phi_\alpha^\lambda, \Phi_\beta^\lambda) $ of the Schr\"odinger representations. These are eigenfunctions of the sublaplacian with eigenvalues $ (2|\alpha|+n)|\lambda|.$ Moreover, they satisfy
\begin{equation}\label{eigen-eq} Z_je^{\lambda}_{\alpha,\beta}=i(2\alpha_j+2)^{\frac{1}{2}}|\lambda|^{\frac{1}{2}}e^{\lambda}_{\alpha+e_j,\beta},\,\,\,\,\overline{Z}_je^{\lambda}_{\alpha,\beta}=i(2\alpha_j)^{\frac{1}{2}}|\lambda|^{\frac{1}{2}}e^{\lambda}_{\alpha-e_j,\beta}\end{equation}
where $ e_j $  are the coordinate vectors in $ \C^n.$  We also recall that the sublaplacian is expressed as  
$\mathcal{L}=-\frac{1}{2}\sum_{j=1}^{n}(\overline{Z}_jZ_j+Z_j\overline{Z}_j)$ in terms of $ Z_j $ and $ \overline{Z_j}.$

\begin{lem}\label{matrix} If $ f $ satisfies the hypothesis in Lemma \ref{stiel} , then $ e_{\alpha,\beta}^\lambda f  \in D_{St}(A),$ for any $ \alpha, \beta \in \mathbb{N}^n $ and $ \lambda \in \R^\ast.$
\end{lem}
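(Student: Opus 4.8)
The goal is to show that multiplying a Stieltjes vector $f$ for $A$ by a matrix coefficient $e_{\alpha,\beta}^\lambda$ again produces a Stieltjes vector. The plan is to control the iterates $\mathcal{L}^m(e_{\alpha,\beta}^\lambda f)$ in $L^2$ in terms of the iterates $\mathcal{L}^k f$ and then invoke Lemma \ref{series}. First I would expand $\mathcal{L}^m(e_{\alpha,\beta}^\lambda f)$ using the product structure of $\mathcal{L}=-\tfrac12\sum_j(\overline{Z}_jZ_j+Z_j\overline{Z}_j)$ and the Leibniz rule for the first-order operators $Z_j,\overline{Z}_j$. Since $e_{\alpha,\beta}^\lambda$ is an eigenfunction of $\mathcal{L}$ with eigenvalue $(2|\alpha|+n)|\lambda|$, and since by \eqref{eigen-eq} each application of $Z_j$ or $\overline{Z}_j$ to $e_{\alpha,\beta}^\lambda$ produces a constant multiple (with constant of size $O(|\lambda|^{1/2}(|\alpha|+1)^{1/2})$) of a neighbouring matrix coefficient $e_{\alpha\pm e_j,\beta}^\lambda$, every term arising from the Leibniz expansion is a bounded function (the matrix coefficients $|e_{\gamma,\beta}^\lambda|$ are bounded by $1$) times some $Z$-$\overline{Z}$ derivative of $f$ of order at most $2m$, with a scalar coefficient that grows at most like $C(\alpha,\beta,\lambda)^m m!$ or so in $m$.

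The key estimate to extract is therefore of the shape
\begin{equation}\label{matrix-est}
\|\mathcal{L}^m(e_{\alpha,\beta}^\lambda f)\|_2 \;\leq\; \sum_{k=0}^{m} \binom{m}{k} C_{\alpha,\beta,\lambda}^{\,m-k}\, \|D^{(2k)}f\|_2,
\end{equation}
where $D^{(2k)}f$ denotes a finite sum of products of $2k$ of the vector fields $Z_j,\overline{Z}_j$ applied to $f$. The point now is to bound $\|D^{(2k)}f\|_2$ by $\|\mathcal{L}^k f\|_2$ up to harmless factors. This uses the standard fact — available from the $L^2$ boundedness of Riesz-type transforms for $\mathcal{L}$, or more elementarily from integration by parts combined with the identity $\langle \mathcal{L}^k f, \mathcal{L}^k f\rangle$ expanding into a sum of $\|$(products of $2k$ vector fields)$f\|_2^2$ — that $\sum \|D^{(2k)}f\|_2 \le C^k \|\mathcal{L}^k f\|_2$ for functions in the domain, or at worst $\|D^{(2k)}f\|_2 \le C^k (\|\mathcal{L}^k f\|_2 + \|f\|_2)$. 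Substituting into \eqref{matrix-est} and using the binomial sum, one arrives at a bound of the form $\|\mathcal{L}^m(e_{\alpha,\beta}^\lambda f)\|_2 \le a\, \max_{k\le m}(C'_{\alpha,\beta,\lambda})^{m-k}\|\mathcal{L}^k f\|_2 \cdot (\text{polynomial in } m)$, which after taking $(2m)$-th roots and using that $\|\mathcal{L}^k f\|_2^{-1/2k}$ is decreasing (as recalled in the proof of Lemma \ref{stiel}) reduces, via Lemma \ref{series}, to the divergence hypothesis $\sum_m \|\mathcal{L}^m f\|_2^{-1/2m}=\infty$. One must also check that $e_{\alpha,\beta}^\lambda f \in D(A)$: it is smooth, all its derivatives lie in $L^2$ by the same Leibniz expansion (each $e_{\gamma,\beta}^\lambda$ being smooth and bounded with bounded derivatives), and it vanishes at the origin together with all derivatives since $f$ does.

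The main obstacle is bookkeeping: making the constant $C_{\alpha,\beta,\lambda}$ in \eqref{matrix-est} explicit enough and controlling the combinatorial growth in $m$ of the number of terms in the Leibniz expansion of $\mathcal{L}^m$ applied to a product, so that the growth is genuinely of the form $(\text{const})^m \times (\text{subexponential})$ rather than something like $(m!)^2$ that would destroy the series comparison. The growth of the $e_{\gamma,\beta}^\lambda$-coefficients in $|\gamma|$ is not an issue because after $2m$ derivatives the index $\gamma$ has moved at most $2m$ from $\alpha$, contributing only a factor $((|\alpha|+2m)|\lambda|)^{m}$, which is again of the form $C_{\alpha,\lambda}^m m^m$ — and since we are comparing $2m$-th roots, $m^{m/2m}=\sqrt{m}\to\infty$ but only polynomially, so Lemma \ref{series} still applies. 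The cleanest route is to absorb everything into the hypotheses of Lemma \ref{series} with $M_m = \|\mathcal{L}^m f\|_2$ (or $\|\mathcal{L}^{2m}f\|_2$) and $K_m = \|\mathcal{L}^m(e_{\alpha,\beta}^\lambda f)\|_2$, checking the inequality $0\le K_m \le aM_m + b^m$ holds after the above expansion; once that is in hand the conclusion is immediate.
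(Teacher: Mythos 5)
Your overall strategy---Leibniz expansion of $\mathcal{L}^m(e^\lambda_{\alpha,\beta}f)$, the ladder relations (\ref{eigen-eq}), boundedness of Riesz-type transforms for $\mathcal{L}$, and a final appeal to Lemma \ref{series}---is the same as the paper's, but the quantitative step on which everything hinges is not justified, and your own justification of it is false. You concede that the scalar coefficients produced when up to $2m$ derivatives fall on the matrix coefficient grow like $C_{\alpha,\lambda}^m m^m$ (the index wanders up to $|\alpha|+2m$, and the ladder factors accumulate), and you claim this is harmless because after taking $(2m)$-th roots it only contributes $\sqrt{m}$. That is exactly what Lemma \ref{series} does \emph{not} tolerate: it allows perturbations of the form $aM_m+b^m$, i.e.\ geometric in $m$, and an $m^m$ factor genuinely destroys the comparison at the borderline. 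Concretely, if $\|\mathcal{L}^m f\|_2^{-1/(2m)}\sim 1/m$ then $\sum_m\|\mathcal{L}^m f\|_2^{-1/(2m)}$ diverges, yet a bound of the form $\|\mathcal{L}^m(e^\lambda_{\alpha,\beta}f)\|_2\le C^m m^m\|\mathcal{L}^m f\|_2$ only yields terms of size about $m^{-1/2}\cdot m^{-1}=m^{-3/2}$, a convergent series, so $e^\lambda_{\alpha,\beta}f\in D_{St}(A)$ would not follow. This also sits oddly with your own remark that $(m!)^2$-type growth would be fatal: $m^m$ is comparable to $m!$ up to geometric factors, so it is on the wrong side of the line as well.

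The paper avoids the problem by never expanding $\mathcal{L}^m$ all at once. It proves by induction on $m$ the bound (\ref{claim}), namely $\|\mathcal{L}^m(fe^\lambda_{\alpha,\beta})\|_2\le 2^{m-1}c_\lambda(\alpha)^m\bigl(\|\mathcal{L}^mf\|_2+\|f\|_2\bigr)$ with $c_\lambda(\alpha)$ independent of $m$: one applies the one-step product rule (\ref{prod}), uses $\|e^\lambda_{\gamma,\beta}\|_\infty\le1$ and the ladder relations only once, applies the induction hypothesis at the shifted indices $\alpha\pm e_j$ (where $c_\lambda(\alpha\pm e_j)\le 2c_\lambda(\alpha)$, which contributes the geometric $2^m$ rather than $m^m$), and absorbs all intermediate derivatives of $f$ into $\|\mathcal{L}^{m+1}f\|_2+\|f\|_2$ via the bounded operators $\mathcal{L}(1+\mathcal{L}^{m+1})^{-1}$, $\mathcal{L}^m(1+\mathcal{L}^{m+1})^{-1}$, $\mathcal{L}^mZ_j(1+\mathcal{L}^{m+1})^{-1}$ and $\mathcal{L}^m\bar{Z}_j(1+\mathcal{L}^{m+1})^{-1}$ (norms at most $\sqrt{2}$). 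With a purely geometric constant, Lemma \ref{series} applies exactly as stated. If you insist on the global Leibniz expansion, you would have to pair the large coefficient of a term having $2k$ derivatives on the matrix coefficient with the smaller norm $\|\mathcal{L}^{m-k}f\|_2$ and interpolate so that the total is still $C^m(\|\mathcal{L}^mf\|_2+\|f\|_2)$; as written, that step is missing, and supplying it is precisely the content of the paper's induction.
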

\begin{proof} As noted  in the previous lemma,  it suffices to show that   $\sum_{m=1}^{\infty}\|\mathcal{L}^m(e^{\lambda}_{\alpha,\beta}f)\|_2^{-\frac{1}{2m}}=\infty$.   
Since $\mathcal{L}=-\frac{1}{2}\sum_{j=1}^{n}(\overline{Z}_jZ_j+Z_j\overline{Z}_j)$ in terms of $ Z_j $, a simple calculation shows that 
\begin{equation}\label{prod}  \mathcal{L}(fg) = (\mathcal{L}f) g + f (\mathcal{L}g) -\frac{1}{2} \sum_{j=1}^n \left( Z_jf \bar{Z}_j g + \bar{Z}_j f Z_jg \right).\end{equation}
By taking $ g = e_{\alpha,\beta}^\lambda $ and making use of (\ref{eigen-eq}) along with the estimate $ \|e_{\alpha,\beta}^\lambda\|_{\infty} \leq 1$ we infer that
$ \| \mathcal{L}( f e_{\alpha,\beta}^\lambda) \|_2  $ is bounded by
$$  \| \mathcal{L}f\|_2 +(2|\alpha|+n)|\lambda| \|f\|_2 + \frac{1}{\sqrt{2}} \sum_{j=1}^n \big(  \sqrt{(\alpha_j+1)|\lambda|} \|Z_jf\|_2+ \sqrt{\alpha_j |\lambda|}\|\bar{Z}_jf\|_2 \big).  $$
As the operators $ Z_j \mathcal{L}^{-1/2} $ and $ \bar{Z}_j\mathcal{L}^{-1/2} $ are bounded on $ L^2(\He^n) $ with norms at most $ \sqrt{2} $, we see that the third term above can be estimated as
$$  \sum_{j=1}^n \big(  \sqrt{(\alpha_j+1)|\lambda|} + \sqrt{\alpha_j |\lambda|} \big) \| \mathcal{L}^{1/2}f \|_2 \leq 2 \sum_{j=1}^n \big((2\alpha_j+1)|\lambda| \big) \| \mathcal{L}^{1/2}f\|_2.$$
Finally using the fact that $ \| \mathcal{L}^{1/2}(1+\mathcal{L})^{-1} f \|_2 \leq \|f\|_2 $ we get the estimate
$$ \| \mathcal{L}(f e_{\alpha,\beta}^\lambda) \|_2 \leq  (2|\alpha|+n)|\lambda| ( 2 \|\mathcal{L}f\|_2 + 3 \|f\|_2) + \| \mathcal{L}f\|_2 .$$
By defining $ a_\lambda(\alpha) = (2|\alpha|+n)|\lambda|), b_\lambda(\alpha) = (2|\alpha|+n+1)|\lambda|)$ and $  c_\lambda(\alpha) = 3b_\lambda(\alpha)+1,$    we rewrite the above as
$$ \| \mathcal{L}(f e_{\alpha,\beta}^\lambda) \|_2 \leq  c_\lambda(\alpha) \big( \|\mathcal{L}f\|_2 +  \|f\|_2 \big).$$
        
In order to prove the lemma it is enough to show for any non-negative integer $ m $ the following estimate holds:
\begin{equation}\label{claim} \| \mathcal{L}^m(f e_{\alpha,\beta}^\lambda) \|_2 \leq  2^{m-1}c_\lambda(\alpha)^m \big( \|\mathcal{L}^mf\|_2 + \|f\|_2).\end{equation}
We prove this by induction. Assuming the result for any $ m ,$ we write $ \mathcal{L}^{m+1}(fg) = \mathcal{L}^m \mathcal{L}(fg) $ and make use of (\ref{prod}) with $ g = e_{\alpha,\beta}^\lambda.$
The first two terms  $ \mathcal{L}^m(\mathcal{L}f g) $ and $\mathcal{L}^m(f \mathcal{L}g) $ together give the estimate
$$ 2^{m-1} c_{\lambda}(\alpha)^m  \big(\|\mathcal{L}^{m+1}f\|_2 +  \| \mathcal{L}f\|_2\big) + a_\lambda(\alpha)  2^{m-1}c_{\lambda}(\alpha)^m ( \|\mathcal{L}^{m}f\|_2 +  \| f\|_2\big) . $$
The boundedness of $ \mathcal{L}(1+\mathcal{L}^{m+1})^{-1} $ and $ \mathcal{L}^m(1+\mathcal{L}^{m+1})^{-1} $ allows us to bound the above by
\begin{equation}\label{one} 2^m c_\lambda(\alpha)^m (1+b_\lambda(\alpha)) \big( \| \mathcal{L}^{m+1}f\|_2 +\|f\|_2 \big).\end{equation}
We now turn our attention to the estimation of the term
$$ \frac{1}{\sqrt{2}} \sum_{j=1}^n  \big(  \sqrt{(\alpha_j+1)|\lambda|} \mathcal{L}^m(Z_jf  e_{\alpha+e_j,\beta}^\lambda)+ \sqrt{\alpha_j |\lambda|} \mathcal{L}^m(e_{\alpha-e_j,\beta}^\lambda \bar{Z}_jf)  \big).  $$
By using the induction hypothesis along with the fact  that the operators $ \mathcal{L}^m Z_j (1+\mathcal{L}^{m+1})^{-1} $ and   $ \mathcal{L}^m \bar{Z}_j(1+\mathcal{L}^{m+1})^{-1}$ are bounded with norm  at most $ \sqrt{2} $ the $ L^2 $ norm of the above is bounded by
$$  2^{m-1} \sum_{j=1}^n  \big(  c_\lambda(\alpha+e_j)^m \sqrt{(\alpha_j+1)|\lambda|} + c_\lambda(\alpha-e_j)^m \sqrt{\alpha_j |\lambda|} \big)  \big(  \| \mathcal{L}^{m+1}f \|_2+  \| f\|_2 \big).$$
Since $ b_\lambda(\alpha+e_j) \leq 2 b_\lambda(\alpha),$ we have $ c_\lambda(\alpha+e_j) \leq 2 c_{\lambda}(\alpha),$ and so  the above sum is bounded by
\begin{equation}\label{two}    2 a_\lambda(\alpha)  2^{m}c_\lambda(\alpha)^m \big(  \| \mathcal{L}^{m+1}f \|_2+  \| f\|_2 \big).
\end{equation}
Combining (\ref{one}) and (\ref{two}), using $ a_\lambda(\alpha) \leq b_\lambda(\alpha) $ and recalling the definition of $ c_\lambda$ we obtain (\ref{claim}), proving the lemma.
\end{proof}

\begin{prop}\label{non-essen} Let $ A $ be as in Proposition \ref{essen} where we have assumed that $ (n-1) < s \leq (n+1).$
 Assume that  $ D_{St}(A) $ contains a subset $ V $ which has the following properties: (i)  every element of $ V $ satisfies the hypothesis of Lemma \ref{stiel} (ii) for every $ (z,t) \in \He^n $ there exists $ f \in V $ such that $ f(z,t) \neq 0.$ Then the linear span of $ D_{St}(A) $ is dense in $ W^{s,2}(\He^n) .$ \end{prop}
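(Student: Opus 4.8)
The plan is to prove the asserted density via Hahn--Banach: I will show that the only continuous linear functional on $W^{s,2}(\He^n)$ that vanishes on $\operatorname{span} D_{St}(A)$ is the zero functional. Since, as recalled in Section 2, the dual of $W^{s,2}(\He^n)$ is $W^{-s,2}(\He^n)$ under the correspondence $g \mapsto \Lambda_g$, it suffices to fix $g \in W^{-s,2}(\He^n)$ with $\Lambda_g \equiv 0$ on $\operatorname{span} D_{St}(A)$ and deduce $g = 0$. Note first that $D_{St}(A) \subset D(A) \subset W^{s,2}(\He^n)$ (every $f \in D(A)$ has $\mathcal{L}^m f \in L^2$ for all $m$, hence lies in $W^{2m,2}(\He^n)$ for all $m$), so $\Lambda_g(\phi) = (\phi,g)$ is well defined and equal to $0$ for every $\phi \in D_{St}(A)$. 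By hypothesis (i) every $f \in V$ satisfies the hypothesis of Lemma \ref{stiel}, so Lemma \ref{matrix} applies and gives $e_{\alpha,\beta}^\lambda f \in D_{St}(A)$ for all $\alpha,\beta \in \mathbb{N}^n$ and $\lambda \in \R^\ast$. Consequently
$$ (e_{\alpha,\beta}^\lambda f, g) = 0 \qquad \text{for all } \alpha,\beta \in \mathbb{N}^n,\ \lambda \in \R^\ast,\ f \in V. $$

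Next I would fix $f \in V$ and argue that the distribution $f\overline{g}$ vanishes. Since $f \in D(A)$ we have $\mathcal{L}^m f \in L^2(\He^n)$ for every $m$, so $f$ belongs to $W^{r,2}(\He^n)$ for every $r \in \R$; by the Sobolev embedding theorem on $\He^n$ (cf.\ \cite{F}) this forces $f$ together with all its derivatives $X^\alpha Y^\beta T^j f$ to be bounded. In particular multiplication by $f$ (and by $\overline{f}$) is a bounded operator on $W^{r,2}(\He^n)$ for every $r$, and, since $\mathcal{L}$ has real coefficients, complex conjugation is an isometry of $W^{-s,2}(\He^n)$. Now choose $g_n \in C_c^\infty(\He^n)$ with $g_n \to g$ in $W^{-s,2}(\He^n)$. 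For each $n$ the function $f\overline{g_n}$ lies in $C_c^\infty(\He^n) \subset L^1 \cap L^2(\He^n)$, and unwinding the definition of the duality bracket together with that of the group Fourier transform one gets
$$ (e_{\alpha,\beta}^\lambda f, g_n) = \int_{\He^n} e_{\alpha,\beta}^\lambda(z,t) f(z,t) \overline{g_n(z,t)}\, dz\, dt = \big\langle \widehat{f\overline{g_n}}(\lambda) \Phi_\alpha^\lambda,\, \Phi_\beta^\lambda \big\rangle . $$

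To finish I would pass to the limit $n \to \infty$. For each fixed $\alpha,\beta,\lambda$ the left side converges to $(e_{\alpha,\beta}^\lambda f, g) = 0$ by continuity of the bracket. On the other hand $f\overline{g_n} \to f\overline{g}$ in $W^{-s,2}(\He^n)$, so by the norm identity (\ref{norm-remark}) the families of matrix coefficients $\big(\langle \widehat{f\overline{g_n}}(\lambda)\Phi_\alpha^\lambda, \Phi_\beta^\lambda\rangle\big)_{\lambda,\alpha,\beta}$ converge to $\big(\langle \widehat{f\overline{g}}(\lambda)\Phi_\alpha^\lambda, \Phi_\beta^\lambda\rangle\big)_{\lambda,\alpha,\beta}$ in $L^2(\widehat{\He}^n)$, hence, along a subsequence, for $\mu$-almost every $\lambda$ and every $\alpha,\beta$. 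Comparing the two limits gives $\widehat{f\overline{g}}(\lambda) = 0$ for almost every $\lambda$, and feeding this back into (\ref{norm-remark}) shows $f\overline{g} = 0$ in $W^{-s,2}(\He^n)$, hence as a distribution on $\He^n$. Finally, given any $(z,t) \in \He^n$, hypothesis (ii) provides $f \in V$ with $f(z,t) \neq 0$; since $f$ is continuous it is non-vanishing on an open neighbourhood $U$ of $(z,t)$, on which $1/f$ is smooth, so $\overline{g} = (1/f)(f\overline{g}) = 0$ on $U$. These neighbourhoods cover $\He^n$, so $\overline{g} = 0$ and therefore $g = 0$ in $W^{-s,2}(\He^n)$, which proves that $\operatorname{span} D_{St}(A)$ is dense in $W^{s,2}(\He^n)$.

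The step I expect to be the main obstacle is the middle one: checking that every $f \in V$ is smooth with all derivatives bounded so that multiplication by $f$ preserves the whole Sobolev scale $W^{r,2}(\He^n)$, and then handling the group Fourier transform of the distribution $f\overline{g}$ carefully --- in particular justifying, via the Plancherel identity (\ref{norm-remark}), that its matrix coefficients are the almost-everywhere limits of those of the approximating $L^1 \cap L^2$ functions $f\overline{g_n}$, so that an element of $W^{-s,2}(\He^n)$ all of whose matrix coefficients vanish must be $0$. Once this is in place, the remaining ingredients --- continuity of the duality bracket, isometry of conjugation on $W^{-s,2}(\He^n)$, and the localisation argument based on (ii) --- are routine.
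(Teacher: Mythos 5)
Your proof is correct and, at its core, follows the same strategy as the paper's: both arguments hinge on Lemma \ref{matrix}, which places $e^{\lambda}_{\alpha,\beta}f$ in $D_{St}(A)$ for every $f\in V$, then deduce from the vanishing of the resulting pairings that a pointwise product of $f$ with (a transform of) the annihilating element is zero, and finally localise via hypothesis (ii) to conclude that the element itself vanishes. The difference is the functional-analytic packaging. The paper stays inside the Hilbert space $W^{s,2}(\He^n)$: it takes $g\in W^{s,2}$ orthogonal to $D_{St}(A)$, rewrites $\langle g, e^{\lambda}_{\alpha,\beta}f\rangle_{(s)}$ as the pairing of $(I+\mathcal{L})^{s}g$ against $e^{\lambda}_{\alpha,\beta}f$, concludes from the inversion formula that $p=f\,(I+\mathcal{L})^{s}g=0$, so that $(I+\mathcal{L})^{s}g$ vanishes wherever some $f\in V$ is nonzero, and then uses invertibility of $(I+\mathcal{L})^{s}$; crucially, in that arrangement $f$ is always attached to the test functions $e^{\lambda}_{\alpha,\beta}$, which Lemma \ref{matrix} already places in $W^{s,2}$, so no multiplication theorem on negative-order spaces is needed. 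You instead pass to the dual $W^{-s,2}$ via Hahn--Banach and prove $f\overline{g}=0$ by approximating $g$ from $C_c^\infty$ and invoking (\ref{norm-remark}); this makes the step ``all matrix coefficients vanish, hence the product vanishes'' more explicit than the paper's appeal to the inversion formula, but it rests on the assertion, which you flag but do not prove, that multiplication by $f$ is bounded on $W^{r,2}(\He^n)$ for every real $r$, in particular on $W^{-s,2}$. That fact is true for $f\in D(A)$ (all left-invariant derivatives lie in $L^2$, hence $f$ and its derivatives are bounded), but establishing it at negative and fractional orders requires the equivalence of the $(I+\mathcal{L})$-based Sobolev norms with vector-field Sobolev norms on a stratified group together with duality and interpolation --- machinery the paper never develops and which its own arrangement avoids. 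If you reorganise your computation so that $f$ always stays with the test function, i.e.\ work directly with the pairings $(e^{\lambda}_{\alpha,\beta}f,\,g)$ as the paper does rather than with $\widehat{f\overline{g}}$, the multiplication theorem becomes unnecessary and your argument essentially coincides with the paper's.
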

\begin{proof}  We first observe that if $ f \in V$ then $\delta_r f$ and $ R_\sigma f $ are also in $ V.$  We will show that the closed linear span of  $ D_{St}(A) ~\text{equals}~ W^{s,2}(\He^n).$  To prove this, let us take $ g \in W^{s,2}(\He^n) $ which is orthogonal to $ D_{St}(A).$ By Lemma \ref{matrix} we know that $ f  e_{\alpha,\beta}^\lambda  \in D_{St}(A) $ for all $ \alpha, \beta \in \mathbb{N}^n $ and $ \lambda \in \R^\ast.$ Thus,
$$  \langle (I+\mathcal{L})^sg,e^{\lambda}_{\alpha\beta}f\rangle_{L^2}= \langle g,  e^{\lambda}_{\alpha\beta}f\rangle_{(s)}=  0.$$
 By defining $ p(z,t)=f(z,t)(I+\mathcal{L})^sg(z,t) $,  the above translates into 
$$  \langle \widehat{p}(\lambda) \Phi_\alpha^\lambda, \Phi_\beta^\lambda \rangle =  \int_{\mathbb{H}^n}p(z,t)(\pi_{\lambda}(z,t)\Phi^{\lambda}_{\alpha},\Phi^{\lambda}_{\beta})dzdt = 0.$$
By the inversion formula for the Fourier transform on $ \He^n $ we conclude that $ p = 0 $ which means $ (1+\mathcal{L})^{s} g $ vanishes on the support of $ f$. Under the assumption on $ V$ it follows that $(1+\mathcal{L})^s g(z,t)=0 $ for every $ (z,t) \in \He^n $ from which we can conclude that $ g =0 $ as the operator $ (1+\mathcal{L})^s $ is invertible. This proves the density. 
\end{proof}

Finally, we are in a position to prove  the analogue of  Chernoff's theorem for the sublaplacian on the Heisenberg group.

{\bf{ Proof of Theorem \ref{CH}}} Consider the operator $ A $ defined in Proposition \ref{essen}. We have already shown that it is not essentially self adjoint. Suppose there exists a nontrivial $ f $ satisfying the hypothesis of Theorem \ref{CH}. Then by Lemma \ref{stiel} we know that $ f $ along with $ \delta_r f $ and $ R_\sigma f $ belong to $ D_{St}(A).$  We take $ V = \{ R_\sigma(\delta_r f): \sigma \in U(n), r> 0 \}.$  It is clear that $ V $ is invariant under $ R_\sigma $ and $ \delta_r.$ If we can show that $ V $ also satisfies the condition $ (ii)$ in  Proposition \ref{non-essen} we know that the linear span of $ D_{St} $ is dense in $ W^{s,2}(\He^n).$ By Theorem \ref{S} this allows us to conclude that $ A $ is essentially self-adjoint. As this is not the case, $ f $ has to be trivial which proves the theorem.

Thus it remains to prove the claim. Let $ (w,s) =( \rho^\prime \sqrt{\sin \theta^\prime} \, \omega^\prime, (\rho^\prime)^2\, \cos \theta^\prime) $ be such that $ f(w,s) \neq 0.$ For any $ (z,t) \in \He^n $ write $ (z,t) = (\rho \sqrt{\sin \theta} \, \omega, \rho^2 \cos \theta)$  and choose $ \sigma \in U(n) $ such that $ \sigma \cdot \omega = \omega^\prime.$ Then it is clear that 
$$ R_\sigma(\delta_{\rho^\prime/\rho}f)(z,t) = f(\rho^\prime \, \sqrt{\sin \theta} \, \omega^\prime, (\rho^\prime)^2\, \cos \theta)) = f(\rho^\prime \sqrt{\sin \theta^\prime} \, \omega^\prime, (\rho^\prime)^2\, \cos \theta^\prime) .$$ The extra assumption on $ f $ means that $ R_\sigma(\delta_{\rho^\prime/\rho}f)(z,t) = f(w,s) \neq 0.$ Hence the claim.

\section{Ingham's theorem on the Heisenberg group}   In this section we prove Theorem \ref{ingh-hei} using Chernoff's theorem for the sublaplacian. We first show the existence of a compactly supported function $ f $ on $ \He^n $ whose Fourier transform has a prescribed decay as stated in  Theorem \ref{ingh-hei}. This proves the sufficiency part of the condition on the function $ \Theta $ appearing in the hypothesis. We then use this part of the  theorem to prove the necessity of the condition on $ \Theta.$ We begin with some preparations.

\subsection{Construction of $ F $} The Koranyi norm of  $x=(z,t)\in\mathbb{H}^n$  is defined  by $|x|=|(z,t)|=(|z|^4+t^2)^{\frac{1}{4}}.$ In what follows, we  work with the following left invariant metric defined by  $ d(x,y):=|x^{-1}y|,\ x,y\in\mathbb{H}^n.$ Given $a\in\mathbb{H}^n$ and $r>0$, the open ball of radius $r$ with centre at $a$ is defined by 
 	$$  B(a,r):=\{x\in\mathbb{H}^n:|a^{-1}x|<r\}.$$ 
With this definition, we note that if $f,g:\mathbb{H}^n\rightarrow\mathbb{C}$ are  such that $\text{supp}(f)\subset B(0,r_1)$ and $\text{supp}(g)\subset B(0,r_2)$, then we have 
 	$$   \text{supp}(f\ast g)\subset B(0,r_1).B(0,r_2)\subset B(0,r_1+r_2) , $$
	where  $f\ast g(x)=\int_{\mathbb{H}^n}f(xy^{-1})g(y)dy $ is the convolution of $ f $ with $ g.$

 	Suppose $\{\rho_j\}_j$ and $\{\tau_j\}_j$ are two sequences of positive real numbers such that both the series $\sum_{j=1}^\infty \rho_j$ and $\sum_{j=1}^\infty \tau_j$ are convergent.  We let $ B_{\C^n}(0,r) $ stand for the ball of radius $ r $ centered at $ 0 $ in $ \C^n$ and let $\chi_S$ denote the characteristic function of a set $S.$ For each $j\in\mathbb{N},$ we define 
 	functions $ f_j $ on $ \C^n $ and $ \tau_j $ on $ \R $ by
	$$f_j(z):=\rho_j^{-2n}\chi_{B_{\mathbb{C}^n}(0,a \rho_j)}(z),\,\,\,  ~ g_j(t):=\tau_j^{-2}\chi_{[-\tau_j^2/2,\tau_j^2/2]}(t)$$ where the positive constant $ a $ is chosen so that $\|f_j\|_{L^1(\mathbb{C}^n)}=1.$ We now  consider the functions $F_j:\mathbb{H}^n\rightarrow\mathbb{C}$ defined by 
 	$$F_j(z,t):=f_j(z)g_j(t),\ (z,t)\in\mathbb{H}^n.$$
In the following lemma, we record some useful, easy to prove,  properties of these functions.
 	\begin{lem}
 		Let $F_j$ be as above and  define $ G_N = F_1 \ast F_2\ast.....\ast F_N.$ Then we have  
 		\begin{enumerate}
 			\item $\|F_j\|_{L^{\infty}(\mathbb{H}^n)}\leq \rho_j^{-2n}\tau_j^{-2},$\,\, \,  $\|F_j\|_{L^1(\mathbb{H}^n)}=1,$
 			\item $\text{supp}(F_j)\subset B_{\mathbb{C}^n}(0,a \rho_j)\times [ -\tau_j^2/2,\tau_j^2/2]\subset B(0, a \rho_j+ c \tau_j) $, where $ 4 c^4 =1.$
 			\item  For any $N\in\mathbb{N}$, $\text{supp} (G_N) \subset B(0, a \sum_{j=1}^{N}\rho_j+ c  \sum_{j=1}^{N}\tau_j),\,\, \|G_N\|_1 =1.$
 			\item Given $x\in\mathbb{H}^n$ and $N\in\mathbb{N}$, $ F_2 \ast F_3 .....\ast F_N(x) \leq \rho^{-2n}_2\tau_2^{-2}.$ 
 		\end{enumerate} 
 	\end{lem}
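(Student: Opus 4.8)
The plan is to verify the four assertions in turn; each reduces to an elementary estimate once one uses the product structure $F_j(z,t)=f_j(z)g_j(t)$ together with the behaviour of convolution recalled just before the lemma.

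For (1), I would split, via Tonelli's theorem, $\|F_j\|_{L^1(\mathbb{H}^n)}=\|f_j\|_{L^1(\mathbb{C}^n)}\,\|g_j\|_{L^1(\mathbb{R})}$: the first factor is $1$ by the choice of the (dimension-dependent, $j$-independent) constant $a$, for which $a^{2n}=|B_{\mathbb{C}^n}(0,1)|^{-1}$, and the second is $\tau_j^{-2}\cdot\tau_j^{2}=1$; the sup bound is immediate from $|F_j(z,t)|\le\|f_j\|_\infty\|g_j\|_\infty=\rho_j^{-2n}\tau_j^{-2}$. For (2), since $F_j$ factors, $\text{supp}(F_j)=\overline{B_{\mathbb{C}^n}(0,a\rho_j)}\times[-\tau_j^2/2,\tau_j^2/2]$, so the content of the statement is that this product sits inside a Koranyi ball. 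For that I would note that $|t|\le\tau_j^2/2$ forces $t^2\le\tau_j^4/4=(c\tau_j)^4$ (using $4c^4=1$) and then invoke the elementary inequality $p^4+q^4\le(p+q)^4$ for $p,q\ge0$ to get $|z|^4+t^2\le(a\rho_j)^4+(c\tau_j)^4\le(a\rho_j+c\tau_j)^4$, i.e.\ $|(z,t)|\le a\rho_j+c\tau_j$.

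For (3), the support statement follows by induction on $N$ from $G_N=G_{N-1}\ast F_N$, the containment proved in (2), and the recalled fact that $\text{supp}(f\ast g)\subset B(0,r_1)B(0,r_2)\subset B(0,r_1+r_2)$ whenever $\text{supp}(f)\subset B(0,r_1)$ and $\text{supp}(g)\subset B(0,r_2)$. For the normalization, I would use that $\mathbb{H}^n$ is unimodular with the Lebesgue measure as Haar measure, so that for nonnegative $f,g$ Tonelli together with the (right-)invariance of the Haar measure yields $\int_{\mathbb{H}^n}f\ast g=\big(\int_{\mathbb{H}^n}f\big)\big(\int_{\mathbb{H}^n}g\big)$; since every $F_j\ge0$, iterating gives $G_N\ge0$ and $\|G_N\|_1=\int_{\mathbb{H}^n}G_N=\prod_{j=1}^{N}\|F_j\|_1=1$. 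For (4), I would peel off $F_2$ in sup norm: by associativity of convolution and nonnegativity, $F_2\ast\cdots\ast F_N(x)=\int_{\mathbb{H}^n}F_2(xy^{-1})\,(F_3\ast\cdots\ast F_N)(y)\,dy\le\|F_2\|_\infty\,\|F_3\ast\cdots\ast F_N\|_1\le\|F_2\|_\infty\le\rho_2^{-2n}\tau_2^{-2}$, where $\|F_3\ast\cdots\ast F_N\|_1\le\prod_{j=3}^{N}\|F_j\|_1=1$ by Young's inequality and the $L^1$ bound in (1) (for $N=2$ the claim is simply $F_2(x)\le\|F_2\|_\infty$).

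I do not expect a genuine obstacle: the lemma is, as the authors note, easy. The only points deserving a line of care are the open-versus-closed ambiguity in the ball notation — the characteristic functions here have closed support, so the Koranyi inclusions are to be read with closed balls (or, if one insists on open balls, after an arbitrarily small enlargement of the radius, harmless since the series $\sum\rho_j$, $\sum\tau_j$ converge) — and invoking the correct invariance of the Haar measure in the computation of $\int_{\mathbb{H}^n}f\ast g$; both are routine on the unimodular group $\mathbb{H}^n$.
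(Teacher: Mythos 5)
Your proposal is correct, and since the paper records this lemma without proof (the authors label these properties ``easy to prove''), your verification supplies exactly the routine argument they intend: the product structure $F_j(z,t)=f_j(z)g_j(t)$ with Tonelli for (1), the inequality $|z|^4+t^2\le (a\rho_j)^4+(c\tau_j)^4\le (a\rho_j+c\tau_j)^4$ for the Koranyi-ball inclusion in (2), the convolution support fact recalled just before the lemma plus unimodularity of $\He^n$ for (3), and the $L^\infty$--$L^1$ bound for (4). Your remarks on the open-versus-closed ball reading and on right invariance of Haar measure are accurate and harmless; nothing further is needed.
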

 	We also recall a result about Hausd\"orff measure which will be used in the proof of the next theorem.  Let $\mathcal{H}^n(A)$ denote the $n$-dimensional Hausdorff measure of 
	$A\subset \mathbb{R}^n.$ Hausd\"orff measure coincides with the Lebesgue measure for Lebesgue measurable sets. For sets in $\mathbb{R}^n$ with sufficiently nice boundaries, the $(n-1)$-dimensional Hausdorff measure is same as the intuitive surface area. For more about this see   \cite[Chapter 7 ]{SS} .  Let $ A \Delta B $ stand for the symmetric difference between any two sets $ A $ and $ B.$ See \cite{ DS} for a proof of the following theorem.
 	\begin{thm}\label{hauss}
 		Let $ A \subset \mathbb{R}^n$ be a bounded set. Then for any $ \xi \in \R^n $
 		$$\mathcal{H}^n(A\Delta(A+\xi))\leq |\xi| \mathcal{H}^{n-1}(\partial A) $$
		where $ A+\xi$ is the translation of $ A $ by $ \xi$ and $ \partial A $ is the boundary of $ A.$
 	\end{thm}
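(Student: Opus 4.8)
Since $\mathcal{H}^k$ agrees with $k$-dimensional Lebesgue measure on Lebesgue measurable sets, the plan is to estimate the Lebesgue measure of $A\Delta(A+\xi)$ directly. If $\xi=0$ or $\mathcal{H}^{n-1}(\partial A)=\infty$ there is nothing to prove, so I would assume $\xi\neq0$ and $\mathcal{H}^{n-1}(\partial A)<\infty$; then $\partial A$ is Lebesgue null, so $A$ is Lebesgue measurable and Fubini's theorem is available. By the rotation invariance of Hausdorff measure I may also take $\xi=|\xi|\,e_1$. The starting point is the identity
$$\mathcal{H}^n\big(A\Delta(A+\xi)\big)=\int_{\mathbb{R}^n}\big|\chi_A(x)-\chi_A(x-\xi)\big|\,dx,$$
so the assertion is that the $L^1$ modulus of continuity of $\chi_A$ at the displacement $\xi$ is at most $|\xi|$ times a surface-area-type quantity attached to $\partial A$.

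I would prove this by slicing in the $e_1$-direction. Writing $x=(x_1,x')\in\mathbb{R}\times\mathbb{R}^{n-1}$ and $A_{x'}:=\{x_1:(x_1,x')\in A\}$, Fubini's theorem turns the integral above into $\int_{\mathbb{R}^{n-1}}\mathcal{H}^1\big(A_{x'}\Delta(A_{x'}+|\xi|)\big)\,dx'$, and the theorem reduces to two facts. The first is the one-dimensional inequality $\mathcal{H}^1\big(E\Delta(E+h)\big)\le h\,\mathcal{H}^0(\partial E)$ for bounded $E\subset\mathbb{R}$ and $h>0$; this is elementary --- it is trivial when $\partial E$ is infinite, and when $\partial E$ is finite the set $E$ coincides off a null set with a finite union of intervals, so $\chi_E$ has bounded variation with $\operatorname{Var}(\chi_E)\le\mathcal{H}^0(\partial E)$, and then $\mathcal{H}^1\big(E\Delta(E+h)\big)=\int_{\mathbb{R}}|\chi_E(t+h)-\chi_E(t)|\,dt\le\int_{\mathbb{R}}\operatorname{Var}\big(\chi_E;(t,t+h)\big)\,dt=h\operatorname{Var}(\chi_E)$. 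The second is the bound $\int_{\mathbb{R}^{n-1}}\mathcal{H}^0\big(\partial(A_{x'})\big)\,dx'\le\mathcal{H}^{n-1}(\partial A)$; here I would use the elementary inclusion $\partial(A_{x'})\subseteq\{x_1:(x_1,x')\in\partial A\}$ to reduce it to the claim that the average over vertical lines of the number of points where the line meets $\partial A$ is at most $\mathcal{H}^{n-1}(\partial A)$. Combining the two facts yields exactly $\mathcal{H}^n\big(A\Delta(A+\xi)\big)\le|\xi|\,\mathcal{H}^{n-1}(\partial A)$.

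The remaining claim is the Eilenberg (integral-geometric) inequality for the $1$-Lipschitz coordinate projection $(x_1,x')\mapsto x'$ restricted to $\partial A$, and I expect it to be the main obstacle, since it is precisely where the ``measure-theoretic'' size of the boundary --- the perimeter of $A$ --- gets compared with its Hausdorff measure. For the sets used elsewhere in this paper --- balls, products of balls with intervals, and Minkowski sums of these --- $\partial A$ is piecewise smooth, both sides are ordinary integrals of the classical surface measure, and the step is routine; in general it may be quoted from Federer's book. An alternative that avoids slicing is to argue with functions of bounded variation: $\|\chi_A-\tau_\xi\chi_A\|_{L^1(\mathbb{R}^n)}\le|\xi|\,|D\chi_A|(\mathbb{R}^n)=|\xi|\,P(A)$, the middle inequality being the standard modulus-of-continuity estimate for BV functions obtained by mollification from the smooth case, followed by Federer's criterion $P(A)\le\mathcal{H}^{n-1}(\partial A)$, valid for every set whose topological boundary has finite $(n-1)$-dimensional measure; on this route all the geometric measure theory is concentrated in that last inequality.
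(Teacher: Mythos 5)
The paper does not actually prove this statement: Theorem \ref{hauss} is quoted from Schymura \cite{DS}, and in the application (the estimate (\ref{est-two}) in Theorem \ref{construct}) it is only used for $A$ a Euclidean ball. So there is no in-paper proof to compare with; judged on its own, your slicing argument is correct and gives an essentially self-contained route. The reductions (assume $\xi\neq 0$ and $\mathcal{H}^{n-1}(\partial A)<\infty$, whence $\partial A$ is Lebesgue null and $A$ is measurable; rotate so $\xi=|\xi|e_1$), the one-dimensional bound $\mathcal{H}^1(E\Delta(E+h))\le h\,\mathcal{H}^0(\partial E)$ via the essential step-function representative, and the inclusion $\partial(A_{x'})\subseteq\{x_1:(x_1,x')\in\partial A\}$ are all sound. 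As you recognize, all the weight then rests on the fiber-counting inequality $\int^*_{\mathbb{R}^{n-1}}\mathcal{H}^0\big((\partial A)_{x'}\big)\,dx'\le\mathcal{H}^{n-1}(\partial A)$, and the one point to be careful about is that you need it with constant exactly $1$: this is Federer's Eilenberg inequality 2.10.25 with $k=0$ (where the constant $\omega_0\omega_{n-1}/\omega_{n-1}$ is $1$), or directly the covering argument in which $\delta$-separated points of a vertical fiber lie in distinct covering sets $E_i$ and the Lebesgue measure of the projection $\pi(E_i)$ is bounded by the isodiametric inequality; use upper integrals to sidestep measurability of $x'\mapsto\mathcal{H}^0((\partial A)_{x'})$. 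Your BV alternative is equally legitimate: the translation estimate $\int_{\mathbb{R}^n}|\chi_A(x)-\chi_A(x-\xi)|\,dx\le|\xi|\,|D\chi_A|(\mathbb{R}^n)$ together with Federer's criterion $P(A)=\mathcal{H}^{n-1}(\partial^*A)\le\mathcal{H}^{n-1}(\partial A)$ (applicable precisely because $\mathcal{H}^{n-1}(\partial A)<\infty$). In either variant the genuinely geometric-measure-theoretic input is quoted rather than proved, but that is no worse than what the paper does in citing \cite{DS}; and for the sets actually used in Section 4 (indicators of balls in $\mathbb{C}^n$), the elementary piecewise-smooth case you mention already suffices.
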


 \begin{thm}\label{construct} The sequence defined by $ G_k  = F_1 \ast F_2\ast.....\ast F_k $ converges to a compactly supported  non-trivial $ F \in L^2(\He^n).$
 \end{thm}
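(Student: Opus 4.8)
The plan is to show $\{G_N\}$ is Cauchy in $L^1(\He^n)$, bootstrap this to $L^2$ using $N$-independent bounds, and then read off compact support and non-triviality. First I would record the uniform estimates. The lemma above gives $\|G_N\|_1=1$ and $\operatorname{supp}(G_N)\subset B(0,R)$ with $R:=a\sum_j\rho_j+c\sum_j\tau_j<\infty$, independently of $N$. Since each $F_j$ is a probability density, $\|F_2*\cdots*F_N\|_1=1$, so Young's inequality yields $\|G_N\|_2\le\|F_1\|_2$ and, for $N\ge 2$, $\|G_N\|_\infty\le\|F_1\|_2\|F_2*\cdots*F_N\|_2\le\|F_1\|_2\|F_2\|_2$. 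Thus $\{G_N\}_{N\ge2}$ is bounded in $L^1\cap L^2\cap L^\infty$ with a common compact support.

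The heart of the matter is the $L^1$-Cauchy estimate, and the decisive point is that each $F_j=f_j\otimes g_j$ is $U(n)$-invariant (since $f_j$ is radial on $\C^n$), so, as recalled in Section~2, $\widehat{F_j}(\lambda)=\sum_k R_k(\lambda,F_j)P_k(\lambda)$ is diagonalised by the spectral projections $P_k(\lambda)$; hence the $\widehat{F_j}(\lambda)$ mutually commute and, by injectivity of the Fourier transform, the $F_j$ commute under convolution. Therefore, for $M>N$, I can write $G_M=(F_{N+1}*\cdots*F_M)*(F_1*\cdots*F_N)=H_{N,M}*G_N$ where $H_{N,M}:=F_{N+1}*\cdots*F_M$ satisfies $\|H_{N,M}\|_1=1$ and $\operatorname{supp}(H_{N,M})\subset B(0,\ep_N)$ with $\ep_N:=a\sum_{j>N}\rho_j+c\sum_{j>N}\tau_j\to0$. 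Writing $G_M-G_N=(H_{N,M}-\delta_0)*G_N$ (with $\delta_0$ the unit mass at the origin) and using $\|H_{N,M}\|_1=1$ together with Minkowski's inequality, I get $\|G_M-G_N\|_1\le\sup_{|u|\le\ep_N}\|L_uG_N-G_N\|_1$, where $L_u$ is left translation. Since $L_u(F_1*F_2*\cdots*F_N)=(L_uF_1)*F_2*\cdots*F_N$ and $\|F_2*\cdots*F_N\|_1=1$, this is bounded by $\sup_{|u|\le\ep_N}\|L_uF_1-F_1\|_1$. The gain from pushing the tail to the left is that the translation now acts on $F_1$, whose scales $\rho_1,\tau_1$ are \emph{fixed}. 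Writing $u=(w,s)$ with $|w|\le\ep_N$ and $|s|\le\ep_N^2$, one has $L_uF_1(z,t)=f_1(z-w)\,g_1\bigl(t-s-\tfrac12\operatorname{Im}(w\bar z)\bigr)$; splitting the difference into an $f_1$-part and a $g_1$-part and applying Theorem~\ref{hauss} to the Euclidean translates of the $2n$-ball $B_{\C^n}(0,a\rho_1)$ and of the interval $[-\tau_1^2/2,\tau_1^2/2]$ gives $\|L_uF_1-F_1\|_1\lesssim \ep_N/\rho_1+(\ep_N^2+\ep_N\rho_1)/\tau_1^2$, which tends to $0$ as $N\to\infty$ uniformly in $M$. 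Hence $\{G_N\}$ is Cauchy in $L^1(\He^n)$.

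Let $F$ be the $L^1$-limit. Then $\operatorname{supp}(F)\subset\overline{B(0,R)}$ is compact and $\|F\|_1=\lim_N\|G_N\|_1=1$, so $F\not\equiv0$. Passing to an a.e.-convergent subsequence, Fatou and the bounds above give $\|F\|_\infty\le\|F_1\|_2\|F_2\|_2$, so $F\in L^2(\He^n)$; moreover $\|G_N-F\|_2^2\le(\|G_N\|_\infty+\|F\|_\infty)\|G_N-F\|_1\le 2\|F_1\|_2\|F_2\|_2\,\|G_N-F\|_1\to0$, so the convergence is also in $L^2$. I expect the Cauchy estimate to be the only genuine obstacle: a direct bound on $\|R_uG_N-G_N\|_1$ (right translation, which lands on the \emph{last} factor $F_N$) would involve the shrinking scale $\rho_N$ and fail to be summable for general sequences, and it is precisely the commutativity of radial convolution — allowing one instead to control the modulus of continuity of the \emph{fixed-scale} function $F_1$ via Theorem~\ref{hauss} — that rescues the argument.
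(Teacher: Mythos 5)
Your proof is correct, but it follows a genuinely different route from the paper. The paper estimates the consecutive differences $G_{k+1}-G_k$ in the sup norm: it writes $G_{k+1}(x)-G_k(x)=\int (G_k(xy^{-1})-G_k(x))F_{k+1}(y)\,dy$, so the new (shrinking-scale) factor enters as a right translation; it then moves this variation onto the fixed-scale factor $F_1$ by writing $G_k=F_1\ast(F_2\ast\cdots\ast F_k)$, bounding the tail by its uniform sup bound $\|F_2\ast\cdots\ast F_k\|_\infty\le \rho_2^{-2n}\tau_2^{-2}$, and converting the right translation into a (centrally twisted) left translation of $F_1$ via the change of variables $u\mapsto ux$, i.e. a conjugation $y\mapsto xyx^{-1}$; Theorem \ref{hauss} applied to $f_1$ and $g_1$ then gives $\|G_{k+1}-G_k\|_\infty\lesssim \tau_{k+1}^2+(1+|z|)\rho_{k+1}$, and the common compact support plus summability yields the $L^2$ (and $L^1$) Cauchy property directly. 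You instead invoke commutativity of the convolution algebra of $U(n)$-invariant functions (the Gelfand pair structure, justified exactly as you say through $\widehat{F_j}(\lambda)=\sum_k R_k(\lambda,F_j)P_k(\lambda)$ — though since all the relevant functions are in $L^1\cap L^2$ you could appeal to Plancherel rather than $L^1$-injectivity) to move the whole tail $F_{N+1}\ast\cdots\ast F_M$ to the left as an approximate identity supported in $B(0,\ep_N)$, reduce to the $L^1$ modulus of continuity of the fixed-scale $F_1$ under left translations, and then bootstrap to $L^2$ via the uniform bound $\|G_N\|_\infty\le\|F_1\|_2\|F_2\|_2$. Both arguments ultimately rest on Theorem \ref{hauss} applied at the scale of $F_1$ and on the summability of $\rho_j,\tau_j$. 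What each buys: your argument avoids the conjugation computation and the $L^\infty$ bound on the tail, gives a clean quantitative tail estimate $\|G_M-G_N\|_1\lesssim\ep_N$ uniformly in $M$, but uses the extra structural input that the $F_j$ are radial (commutativity), which the paper never needs — its proof would survive for non-radial building blocks, since the twist produced by conjugation only shifts the central variable by $\Im(z\cdot\bar w)$, which is controlled on the fixed compact support. Your final interpolation step $\|G_N-F\|_2^2\le\|G_N-F\|_\infty\|G_N-F\|_1$ and the Fatou argument for $F\in L^\infty$ are fine, and nontriviality via $\|F\|_1=1$ is the same as in the paper.
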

 \begin{proof}
  In order show that $ (G_k) $ is Cauchy in $ L^2(\He^n) $ we first estimate $\| G_{k+1}-G_k \|_{L^{\infty}(\mathbb{H}^n)}.$ As  all the functions $ F_j $ have unit $ L^1 $ norm,  for any $x\in\mathbb{H}^n$ we have 
 		\begin{align*}
 		G_{k+1}(x)-G_k(x)&=\int_{\mathbb{H}^n} G_{k}(xy^{-1})F_{k+1}(y)dy-G_k(x) (x)\int_{\mathbb{H}^n}F_{k+1}(y)dy  \\
 		&=  \int_{\mathbb{H}^n}\left(G_k(xy^{-1})-G_k(x)\right)F_{k+1}(y)dy.
 		\end{align*}   
 As $ F_j $ are even we can change $ y $ into $ y^{-1}  $ in the above and   estimate the same as
 		\begin{equation}
 		\label{eq1}
 		|G_{k+1}(x)-G_{k}(x)|\leq \int_{\mathbb{H}^n}\left|G_k(xy)-G_k(x)\right|F_{k+1}(y)dy.
 		\end{equation} 
By defining $ H_{k-1} = F_2 \ast F_3......\ast F_k $ so that $ G_k = F_1 \ast H_{k-1} ,$ we have
$$ G_k(xy)-G_k(y) = \int_{\mathbb{H}^n}\left(F_1(xyu^{-1})-F_1(xu^{-1})\right) H_{k-1}(u) du$$
Using the estimate in Lemma 4.1 (4) we now estimate
 		\begin{equation}\label{eq2}
 		|G_k(xy)-G_k(x)|\leq \rho_2^{-2n}\tau_2^{-2}\int_{\mathbb{H}^n}\left|F_1(xyu^{-1})-F_1(xu^{-1})\right|du.
 		\end{equation}
 The change of variables $ u \rightarrow u x$  transforms the integral in the right hand side  of the above equation into
 		$$\int_{\mathbb{H}^n}\left|F_1(xyu^{-1})-F_1(xu^{-1})\right|du=\int_{\mathbb{H}^n}\left|F_1(xyx^{-1}u^{-1})-F_1( u^{-1})\right|du.$$  Since  the group $\mathbb{H}^n$ is unimodular, another change of variables $u \rightarrow u^{-1}$  yields
 		$$\int_{\mathbb{H}^n}\left|F_1(xyx^{-1}u^{-1})-F_1( u^{-1})\right|du=\int_{\mathbb{H}^n}\left|F_1(xyx^{-1}u )-F_1( u )\right| du.$$

Let $ x =(z,t) =(z,0)(0,t),~ y =(w,s)=(w,0)(0,s).$ As $ (0,t)$ and $ (0,s) $ belong to the center of $ \He^n $, an easy calculation shows that $ xyx^{-1} = (w,0) (0, s+\Im(z \cdot \bar{w})).$ With $ u=(\zeta,\tau) $ we have 
$$ xyx^{-1}u = (w+\zeta,0)(0, \tau+ s+ \Im(z \cdot \bar{w})-(1/2)\Im(\zeta\cdot \bar{w})).$$
Since $ F_1(z,t) = f_1(z)g_1(t) $ we see that the integrand $F_1(xyx^{-1}u )-F_1( u )$ in the above integral takes the form
$$  f_1(w+\zeta) g_1(\tau+ s+ \Im(z \cdot \bar{w})-(1/2)\Im(\zeta\cdot \bar{w}))- f_1(\zeta)g_1(\tau).$$
By setting $ b =  b(s, z,w,\zeta) =s+ \Im(z \cdot \bar{w})-(1/2) \Im(\zeta\cdot \bar{w}) $ we can rewrite the above as
\begin{equation}\label{eq3} \big( f_1(w+\zeta)-f_1(\zeta)\big) g_1(\tau+b)+ f_1(\zeta)\big( g_1(\tau+ b)-g_1(\tau)\big).\end{equation}

In order to estimate the contribution of the second term to  the integral under consideration
we first estimate the $ \tau$ integral as follows:
$$  \int_{-\infty}^\infty | g_1(\tau+b)-g_1(\tau)|  d\tau =  \tau_1^{-2} | (-b+K_\tau) \Delta K_\tau |$$
where $ K_\tau = [-\frac{1}{2}\tau_1^2, \frac{1}{2}\tau^2] $  is the support of $ g_1.$  For $ \zeta $ in the support of $ f_1 ,$ we have $ |\zeta| \leq a\rho_1 $ and hence
$$  | (-b+K_\tau) \Delta K_\tau | \leq 2 |b(z,w,\zeta)| \leq ( 2|s|+ |z||w|+ a\rho_1 |w|).   $$
Thus we have proved the estimate 
\begin{equation}\label{est-one}
\int_{\He^n}  f_1(\zeta)  |  g_1(\tau+ b)-g_1(\tau)| d\zeta d\tau \leq C \big(2|s|+(a\rho_1+|z|)|w| \big)\end{equation}
As $ g_1 $ integrates  to one, the contribution of the first term in (\ref{eq3}) is given by
$$  \int_{\C^n} | f_1(w+\zeta)-f_1(\zeta)| d\zeta   =  \rho_1^{-2n} \mathcal{H}^{2n}\left((-w+ B_{\mathbb{C}^n}(0, a\rho_1))\Delta B_{\mathbb{C}^n}(0,a\rho_1) \right) .$$
By appealing to Theorem \ref{hauss} in estimating the above, we obtain 
\begin{equation}\label{est-two}
\int_{\He^n} | f_1(w+\zeta)-f_1(\zeta)| \, g( \tau+b) \, d\zeta d\tau  \leq C |w| .
\end{equation}
Using the estimates (\ref{est-one}) and (\ref{est-two}) in (\ref{eq2}) we obtain
$$ |G_k(xy)-G_k(x)|\leq  C \rho_2^{-2n}\tau_2^{-2} \big( |s|+ (c_1+ c_2|z|)|w|)\big).$$ 
This estimate, when used in (\ref{eq1}), in turn gives us 
\begin{equation}\label{est-three}
|G_{k+1}(z,t)-G_{k}(z,t)|\leq C \int_{\mathbb{H}^n} \big( |s|+ (c_1+ c_2|z|)|w|)\big) F_{k+1}(w,s)\, dw\,ds
\end{equation}
where the constants $ c_1, c_2 $ and $ C $ depend only on $ n.$ Recalling that on the support of $ F_{k+1}(w,s) = f_{k+1}(w)g_{k+1}(s) $,  $ |w| \leq \rho_{k+1} $ and $ |s| \leq \tau_{k+1}^2 $, the above yields the estimate
\begin{equation}\label{est-four}
|G_{k+1}(z,t)-G_{k}(z,t)|\leq C \big( \tau_{k+1}^2 + (c_1+c_2 |z|) \rho_{k+1} \big).
\end{equation}
It is easily seen that the support of $ G_{k+1}-G_k $ is contained in $ B(0, a\rho+c\tau) $ where $ \rho = \sum_{j=1}^\infty \rho_j $ and $ \tau = \sum_{\tau_j}.$ Consequently, from the above we conclude that
$$  \|G_{k+1}-G_k\|_2 \leq  \| G_{k+1} - G_k \|_{\infty} \big(| B(0, a\rho+c\tau)|\big)^{1/2}  \leq C \big( \tau_{k+1}^2 + c_3 \rho_{k+1} \big).$$
From the above, it is clear that $ G_k $ is Cauchy in $ L^2(\He^n) $ and hence converges to a function $ F \in L^2(\He^n) $ whose support  is contained in $ B(0,a\rho+c\tau).$  The same argument shows that $ G_k $ converges to $ F $ in $ L^1.$ As $ \|G_k\|_1 =1 $ for any $ k,$ it follows that $ \|F||_1 =1 $ and hence $ F $ is nontrivial.
\end{proof}

\subsection{Estimating the Fourier transform of $ F$}
Suppose now that $\Theta $ is an even, decreasing function on $ \R $ for which $ \int_1^\infty \Theta(t) t^{-1} dt < \infty.$ We want to  choose two sequences $ \rho_j $ and $ \tau_j $  in terms of $ \Theta $ so that the series $ \sum_{j=1}^\infty \rho_j $ and $ \sum_{j=1}^\infty \tau_j $ both converge. We can then construct a function $ F $ as in Theorem \ref{construct} which will be compactly supported. Having done the construction we now want to compute the Fourier transform of the constructed function $ F $ and compare it with $ e^{-  \Theta(\sqrt{H(\lambda)}) \sqrt{H(\lambda)}}.$ This can be achieved by a judicious choice of the sequences $ \rho_j $ and $ \tau_j.$ As $ \Theta $ is given to be decreasing it follows that $ \sum_{j=1}^\infty  \frac{\Theta(j)}{ j} < \infty.$ It is then possible to  choose a decreasing sequence $ \rho_j $ such that $ \rho_j \geq  c_n^2 e^2  \frac{\Theta(j)}{j} $ (for a constant $ c_n $ to be chosen later) and $ \sum_{j=1}^\infty \rho_j < \infty.$ Similarly, we choose another decreasing sequence $ \tau_j $ such that $ \sum_{j=1}^\infty \tau_j <\infty.$

In the proof of the following theorem we require good estimates for the Laguerre coefficients of the function $ f_j(z) =\rho_j^{-2n}\chi_{B_{\mathbb{C}^n}(0,a \rho_j)}(z) $ where $ a $ chosen so that $ \|f_j\|_1 =1.$ These coefficients are defined by
\begin{equation}\label{lag-co} R_k^{n-1}(\lambda, f_j) =  \frac{   k! (n-1)!}{(k+n-1)!}  \int_{\C^n}  f_j(z) \varphi_{k,\lambda}^{n-1}(z) dz .\end{equation}

\begin{lem}\label{est-four-lem}  There exists a constant $ c_n > 0 $ such that 
$$  |R_k^{n-1}(\lambda,f_j)| \leq c_n \big(\rho_j \sqrt{(2k+n)|\lambda|}\big)^{-n+1/2}.$$
\end{lem}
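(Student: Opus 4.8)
The plan is to reduce the bound on the Laguerre coefficient $R_k^{n-1}(\lambda,f_j)$ to a direct integration of the pointwise estimates for $\varphi_{k,\lambda}^{n-1}$ supplied by Lemma~\ref{lem:T}. Since $f_j$ is a normalized multiple of the indicator of the ball $B_{\C^n}(0,a\rho_j)$, and $\varphi_{k,\lambda}^{n-1}(z)$ depends only on $|z|$, passing to polar coordinates in $\C^n$ turns the defining integral \eqref{lag-co} into a one-dimensional integral of the form
\begin{equation*}
R_k^{n-1}(\lambda,f_j) = C_{k,n}^2\,\rho_j^{-2n}\,\omega_{2n-1}\int_0^{a\rho_j} \varphi_{k,\lambda}^{n-1}(r)\, r^{2n-1}\, dr,
\end{equation*}
where $\omega_{2n-1}$ is the surface measure of the unit sphere in $\R^{2n}$ and $C_{k,n}^2 = k!(n-1)!/(k+n-1)!$. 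Multiplying by $C_{k,n}$ and invoking Lemma~\ref{lem:T} we get $C_{k,n}|\varphi_{k,\lambda}^{n-1}(r)| \le (\text{RHS of Lemma~\ref{lem:T}})$, so it remains to integrate $r\mapsto (\text{that bound})\, r^{2n-1}$ against $\rho_j^{-2n}$ over $[0,a\rho_j]$.

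First I would reparametrize: set $\nu = \nu(k) = 2(2k+n)$ and $R = a\rho_j\sqrt{|\lambda|}$, and substitute $u = r\sqrt{|\lambda|}$, so the integral becomes $\rho_j^{-2n}|\lambda|^{-n}\int_0^{R}(\text{bound in }u)\,u^{2n-1}\,du$. The first case of Lemma~\ref{lem:T} gives $C_{k,n}|\varphi_{k,\lambda}^{n-1}| \le C$ (the factors $(r\sqrt{|\lambda|})^{-(n-1)}$ and $(\tfrac12\nu r^2|\lambda|)^{(n-1)/2}$ cancel up to a constant times $\nu^{(n-1)/2}$), which contributes at worst $C\nu^{(n-1)/2}R^{2n}$ over the innermost range; the second, oscillatory-region case gives $C\nu^{-1/4}(r\sqrt{|\lambda|})^{-n+1/2}$ times constants. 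The dominant contribution to the whole integral comes from the largest $r$, i.e. $r\approx a\rho_j$, where the decay is slowest; one checks that the case-2 bound $\sim \nu^{-1/4}(r\sqrt{|\lambda|})^{-n+1/2}$ integrated against $r^{2n-1}$ over $[0,a\rho_j]$ produces, after multiplying by $\rho_j^{-2n}$, a term of order $\nu^{-1/4}(\rho_j\sqrt{|\lambda|})^{-n+1/2}$, which (since $\nu \asymp 2k+n$) is exactly $c_n(\rho_j\sqrt{(2k+n)|\lambda|})^{-n+1/2}$ up to the harmless replacement of $\tfrac12\nu$ by $2k+n$. Whether the support radius $a\rho_j\sqrt{|\lambda|}$ lands in the first, second, third, or fourth regime of Lemma~\ref{lem:T}, I would check that no regime gives anything worse than this, using that the bound is monotone-ish in $r$ within each piece and that the breakpoints are at $r\sqrt{|\lambda|}\asymp \nu^{-1/2},\ \nu^{1/2},\ \nu^{1/2}$; in the far regime $r\ge\sqrt{3\nu}/\sqrt{|\lambda|}$ the Gaussian decay makes the contribution negligible.

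The main obstacle I anticipate is bookkeeping rather than conceptual: I must verify that the claimed exponent $-n+1/2$ is genuinely the worst case over all four ranges of $r$ and all positions of the cutoff $a\rho_j\sqrt{|\lambda|}$ relative to the thresholds, and that the constant can be taken independent of $k$, $\lambda$, and $j$. In particular I should be careful that when $a\rho_j\sqrt{|\lambda|}$ is small (deep in the first regime) the bound $C\nu^{(n-1)/2}R^{2n}\rho_j^{-2n}|\lambda|^{-n} = C\nu^{(n-1)/2}a^{2n}$ must still be dominated by $c_n(\rho_j\sqrt{(2k+n)|\lambda|})^{-n+1/2}$ — this holds precisely because $\rho_j\sqrt{(2k+n)|\lambda|}$ is then small so the right side is large — and more generally I would split the argument into the cases according to where $a\rho_j\sqrt{|\lambda|}$ falls and in each case bound the integral by elementary estimates, collecting the constants into a single $c_n$. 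This is a finite check over a handful of cases and should present no real difficulty beyond care with the powers of $\nu$.
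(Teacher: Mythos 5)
There is a genuine gap, and it is precisely in the power of $2k+n$. The integrand in \eqref{lag-co} carries the factor $\frac{k!(n-1)!}{(k+n-1)!}=C_{k,n}^2$, while Lemma \ref{lem:T} only normalizes one factor $C_{k,n}$; after invoking the lemma you discard the leftover factor $C_{k,n}\asymp c_n(2k+n)^{-(n-1)/2}$ (you "integrate the bound against $\rho_j^{-2n}$" with no further $k$-dependent prefactor), and that leftover factor is exactly what the stated estimate needs. Concretely, in the oscillatory regime your computation yields a term of order $\nu^{-1/4}\big(\rho_j\sqrt{|\lambda|}\big)^{-n+1/2}$, which you then declare to be "exactly" $c_n\big(\rho_j\sqrt{(2k+n)|\lambda|}\big)^{-n+1/2}$; but the latter equals, up to constants, $(2k+n)^{-\frac{n}{2}+\frac14}\big(\rho_j\sqrt{|\lambda|}\big)^{-n+1/2}$, so your bound is larger than the target by a factor $(2k+n)^{(n-1)/2}$ whenever $n\ge 2$ (the two coincide only for $n=1$). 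The same loss appears in your small-support check: $C\nu^{(n-1)/2}a^{2n}$ is unbounded in $k$, whereas when $a\rho_j\sqrt{|\lambda|}$ lies deep in the first regime the right-hand side is only bounded below by a constant, so the claimed domination fails. The repair is to keep both halves of $C_{k,n}^2$: one half feeds Lemma \ref{lem:T}, and the other half, $\asymp(2k+n)^{-(n-1)/2}$, converts $\nu^{-1/4}$ into $(2k+n)^{-\frac{n}{2}+\frac14}$ and turns $\nu^{(n-1)/2}$ into a constant. This is how the paper argues: on $0\le r\le\big(\rho_j\sqrt{(2k+n)|\lambda|}\big)^{-1}$ it uses the trivial bound $\frac{k!(n-1)!}{(k+n-1)!}|\varphi_{k,\lambda}^{n-1}|\le 1$ together with $r^{2n-1}\le r^{n-1/2}\big(\rho_j\sqrt{(2k+n)|\lambda|}\big)^{-n+1/2}$, and on the complementary range it estimates the full product $C_{k,n}\cdot\big(C_{k,n}|\varphi_{k,\lambda}^{n-1}|\big)$ by $C_n\big(\rho_j r\sqrt{(2k+n)|\lambda|}\big)^{-n+1/2}$.

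A secondary caution for your "no regime gives anything worse" step (which the paper also treats tersely): even after retaining the extra $C_{k,n}$, the pointwise inequality $C_{k,n}^2|\varphi_{k,\lambda}^{n-1}(s)|\le C_n\big(s\sqrt{(2k+n)|\lambda|}\big)^{-n+1/2}$ fails by a factor of order $(2k+n)^{1/6}$ near the turning point $s^2|\lambda|\approx 2\nu(k)$, because there the Airy bound of Lemma \ref{lem:T} is only $\nu^{-1/3}$ rather than $\nu^{-1/2}$. The lemma survives on integration: the contribution of the transition regime is $O\big(\nu^{1/2}(\rho_j\sqrt{|\lambda|})^{-2n}\big)$, and this regime is reached only when $a\rho_j\sqrt{|\lambda|}\ge\sqrt{\nu}$, which makes that contribution dominated by $c_n\big(\rho_j\sqrt{(2k+n)|\lambda|}\big)^{-n+1/2}$. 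So your case analysis must be done at the level of the integrals, not through a single pointwise bound valid on all of $r\ge\big(\rho_j\sqrt{(2k+n)|\lambda|}\big)^{-1}$.
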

\begin{proof} By abuse of notation we write $ \varphi_{k,\lambda}^{n-1}(r)$ in place of $\varphi_{k,\lambda}^{n-1}(z)$ when $ |z| =r.$ As $ f_j $ is defined as the dilation of a radial function,  the Laguerre coefficients are given by the integral
\begin{equation}\label{lag-co1} R_k^{n-1}(\lambda, f_j) =  \frac{2 \pi^n}{\Gamma(n)} \frac{   k! (n-1)!}{(k+n-1)!}  \int_0^a   \varphi_{k,\lambda}^{n-1}(\rho_j r) r^{2n-1} dr .\end{equation}  
When $ a \leq (\rho_j \sqrt{ (2k+n)|\lambda| })^{-1} $ we use the bound $\frac{   k! (n-1)!}{(k+n-1)!} |\varphi_{k,\lambda}^{n-1}(r)| \leq 1$ to  estimate  
$$          \frac{2 \pi^n}{\Gamma(n)}  \frac{   k! (n-1)!}{(k+n-1)!}  \int_0^a   \varphi_{k,\lambda}^{n-1}(\rho_j r) r^{2n-1} dr   \leq  \frac{ \pi^n a^{n+1/2}}{\Gamma(n+1)} 
\big(\rho_j \sqrt{(2k+n)|\lambda|}\big)^{-n+1/2}.$$ 
When $ a > (\rho_j \sqrt{ (2k+n)|\lambda| })^{-1} $ we split the integral into two parts, one of which gives the same estimate as above. To estimate the integral taken over $ (\rho_j \sqrt{ (2k+n)|\lambda| })^{-1} < r < a ,$ we use the bound  stated in Lemma  \ref{lem:T} which leads to the estimate 
$$          \frac{2 \pi^n}{\Gamma(n)}  \frac{   k! (n-1)!}{(k+n-1)!}  \int_{(\rho_j \sqrt{ (2k+n)|\lambda| })^{-1}}^a   \varphi_{k,\lambda}^{n-1}(\rho_j r) r^{2n-1} dr $$  
$$  \leq  C_n  \big(\rho_j \sqrt{(2k+n)|\lambda|}\big)^{-n+1/2} \int_0^a r^{n-1/2} dr = C_n^\prime a^{n+1/2}  \big(\rho_j \sqrt{(2k+n)|\lambda|}\big)^{-n+1/2}.$$
Combining the two estimates we get the lemma.
\end{proof}

\begin{thm}\label{compute}
 		Let $\Theta:\R\rightarrow [0,\infty)$ be an even, decreasing function with $\lim_{\lambda \rightarrow \infty}\Theta(\lambda)=0$  for which $\int_{1}^{\infty}\frac{\Theta(\lambda)}{\lambda} d\lambda <\infty.$ Let $ \rho_j $ and $ \tau_j $ be chosen as above. Then the Fourier transform of the function $ F $ constructed in Theorem \ref{construct} satisfies the estimate
 		\begin{align*}
 		\widehat{F}(\lambda)^*\widehat{F}(\lambda)\leq e^{-2\Theta (\sqrt{H(\lambda)}) \sqrt{H(\lambda)}}, \ \lambda\neq0.
 		\end{align*}
 		
 \end{thm}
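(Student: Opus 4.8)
The plan is to reduce the matrix inequality $\widehat{F}(\lambda)^*\widehat{F}(\lambda)\leq e^{-2\Theta(\sqrt{H(\lambda)})\sqrt{H(\lambda)}}$ to a scalar estimate on the spectral side. Since each $F_j(z,t)=f_j(z)g_j(t)$ is $U(n)$-invariant in $z$, the function $F=\lim_k F_1\ast\cdots\ast F_k$ is radial, so $\widehat{F}(\lambda)$ is diagonalized by the scaled Hermite basis: $\widehat{F}(\lambda)=\sum_{k=0}^\infty R_k(\lambda,F)P_k(\lambda)$, and consequently $\widehat{F}(\lambda)^*\widehat{F}(\lambda)=\sum_k |R_k(\lambda,F)|^2 P_k(\lambda)$ while $e^{-2\Theta(\sqrt{H(\lambda)})\sqrt{H(\lambda)}}=\sum_k e^{-2\Theta(\sqrt{(2k+n)|\lambda|})\sqrt{(2k+n)|\lambda|}}P_k(\lambda)$. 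So it suffices to prove, for every $k\geq 0$ and every $\lambda\neq 0$, the scalar bound
\begin{equation*}
|R_k(\lambda,F)|^2\leq e^{-2\Theta(\sqrt{(2k+n)|\lambda|})\sqrt{(2k+n)|\lambda|}}.
\end{equation*}

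To estimate $R_k(\lambda,F)$, I would use that the Fourier transform turns convolution into operator products, hence $R_k(\lambda,F)=\prod_{j=1}^\infty R_k(\lambda,F_j)$ (the convergence of the infinite product must be justified using the $L^1$-convergence of $G_k$ to $F$ established in Theorem \ref{construct}, together with $\|G_k\|_1=1$). Each factor further factors as $R_k(\lambda,F_j)=R_k^{n-1}(\lambda,f_j)\,\widehat{g_j}(\lambda\cdot\text{shift})$-type quantity; more precisely, writing $F_j^\lambda(z)=\widehat{g_j}(\lambda)f_j(z)$ is not quite right because of the central variable, but since $g_j$ is an even function supported in $[-\tau_j^2/2,\tau_j^2/2]$ with $\|g_j\|_1=1$, its contribution is a scalar of modulus at most $1$. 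So the key input is Lemma \ref{est-four-lem}: $|R_k^{n-1}(\lambda,f_j)|\leq c_n(\rho_j\sqrt{(2k+n)|\lambda|})^{-n+1/2}$. Combined with the trivial bound $|R_k^{n-1}(\lambda,f_j)|\leq 1$, this gives $|R_k(\lambda,F_j)|\leq \min\{1,\,c_n(\rho_j\sqrt{(2k+n)|\lambda|})^{-n+1/2}\}$.

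Now set $s=\sqrt{(2k+n)|\lambda|}$. For those $j$ with $\rho_j s\geq c_n^2 e^2$ (equivalently $c_n(\rho_j s)^{-n+1/2}\leq e^{-(n-1/2)\cdot 2}\leq e^{-1}$, using $n\geq 1$), each factor is at most $e^{-1}$, so the product over such $j$ is at most $e^{-\#\{j:\rho_j s\geq c_n^2 e^2\}}$. By the choice $\rho_j\geq c_n^2 e^2\,\Theta(j)/j$ and monotonicity of $\Theta$, the condition $\rho_j s\geq c_n^2 e^2$ holds whenever $\Theta(j)/j\cdot s\geq 1$; since $\Theta$ is decreasing, $\Theta(s)/s$-type comparisons show this holds for all $j\leq$ roughly $s\,\Theta(s)$ (this is the standard Ingham-type bookkeeping: one shows $\#\{j:\rho_j s\geq c_n^2 e^2\}\geq s\,\Theta(s)=\Theta(s)\sqrt{(2k+n)|\lambda|}$, possibly after absorbing constants by adjusting $c_n$). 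Squaring gives $|R_k(\lambda,F)|^2\leq e^{-2\Theta(s)s}$, which is exactly the claimed bound.

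I expect the main obstacle to be the monotonicity/counting step: carefully verifying that the number of indices $j$ for which $\rho_j\sqrt{(2k+n)|\lambda|}$ exceeds the threshold is at least $\Theta(\sqrt{(2k+n)|\lambda|})\sqrt{(2k+n)|\lambda|}$, uniformly in $k$ and $\lambda$. This requires exploiting that $\rho_j\geq c_n^2 e^2\Theta(j)/j$ together with the fact that $\Theta$ decreases, so that $j\Theta(j)^{-1}$ is controlled; one inverts the relation $\rho_j s\geq c_n^2 e^2$ into $j\lesssim$ something comparable to $s\Theta(s)$ and checks the inequality survives. A secondary technical point is justifying the infinite product formula $R_k(\lambda,F)=\prod_j R_k(\lambda,F_j)$ and the passage to the limit in the Hilbert--Schmidt (or weak operator) topology, which follows from $G_k\to F$ in $L^1$ hence $\widehat{G_k}(\lambda)\to\widehat{F}(\lambda)$ in operator norm for each fixed $\lambda$, together with $\widehat{G_k}(\lambda)=\prod_{j=1}^k\widehat{F_j}(\lambda)$ and each $\|\widehat{F_j}(\lambda)\|_{op}\leq 1$.
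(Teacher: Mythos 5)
Your proposal is correct and follows essentially the same route as the paper: reduce to the scalar bound on the Laguerre coefficients $R_k^{n-1}(\lambda,\cdot)$ via radiality, discard the central factors using $|\widehat{g_j}(\lambda)|\leq 1$, apply Lemma \ref{est-four-lem} to each $f_j$, and use $\rho_j\geq c_n^2e^2\,\Theta(j)/j$ with the monotonicity of $\Theta$ to extract roughly $\Theta(s)s$ factors each bounded by $e^{-1}$ (the paper fixes $N=\lfloor\Theta(s)s\rfloor$ and bounds the first $N$ factors by the $N$-th, which is the same bookkeeping as your counting of good indices). Your only hedge, that $F_j^\lambda(z)=\widehat{g_j}(\lambda)f_j(z)$ might not be exact, is unnecessary: since $F_j(z,t)=f_j(z)g_j(t)$ and $g_j$ is even, the partial Fourier transform in $t$ factors exactly, which is precisely how the paper proceeds.
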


 	\begin{proof} Observe that  $ F $ is radial  since each $ F_j $ is radial and hence the Fourier transform $ \widehat{F}(\lambda) $ is a function of the Hermite opertaor $ H(\lambda).$ More precisely, 
	\begin{align}
 		\widehat{F}(\lambda)= \sum_{k=0}^{\infty}R_k^{n-1}(\lambda, F) P_k(\lambda)
 		\end{align}
where  the Laguerre coefficients are  explicitly given by  (see  (2.4.7) in \cite{TH3}. There is a typo- the factor $ |\lambda|^{n/2}$ should not be there)
$$ R_k^{n-1}(\lambda, F) =  \frac{   k! (n-1)!}{(k+n-1)!}  \int_{\C^n}  F^\lambda(z) \varphi_{k,\lambda}^{n-1}(z) dz .$$
In the above, $ F^\lambda(z) $ stands for the inverse Fourier transform of $ F(z,t) $ in the $ t $ variable.
Expanding any $ \varphi \in L^2(\R^n) $ in terms of $ \Phi_\alpha^\lambda$ it is easy to see that the conclusion $\widehat{F}(\lambda)^*\widehat{F}(\lambda)\leq e^{-2\Theta (\sqrt{H(\lambda)}) \sqrt{H(\lambda)}} $ follows once we show that 
$$ (R_k^{n-1}(\lambda, F))^2 \leq C e^{-2\Theta (\sqrt{(2k+n)|\lambda}) \sqrt{(2k+n)|\lambda|}}   $$
for all $ k \in \mathbb{N} $ and $ \lambda \in \R^\ast.$ Now note that, by definition of $g_j$ and the choice of $a,$ we have 
 		$$|\widehat{g}_j(\lambda)|=\left|\frac{\sin (\frac{1}{2}\tau_j^2\lambda)}{\frac{1}{2}\tau_j^2\lambda}\right|\leq 1, \,\,\,  |R_k^{n-1}(\lambda, f_j)| \leq 1.$$ 
The bound on $ R_k^{n-1}(\lambda, f_j) $  follows from the fact that  $ |\varphi_k^\lambda(z)| \leq  \frac{(k+n-1)!}{k!(n-1)!} .$ Since $ F $ is constructed as the $ L^2 $ limit of the $ N$-fold convolution $ G_N = F_1 \ast F_2 ......\ast F_N $ we observe that for any $ N $
$$ ( R_k^{n-1}(\lambda, F))^2 \leq (R_k^{n-1}(\lambda, G_N))^2   =  (  \Pi_{j=1}^N R_k^{n-1}(\lambda, F_j))^2 $$
and hence it is enough to show that  for a given $ k $ and $ \lambda$ one can choose $ N=N(k,\lambda) $ in such a way that 
\begin{equation}\label{eq6} (  \Pi_{j=1}^N R_k^{n-1}(\lambda, F_j))^2 \leq C e^{-2\Theta (\sqrt{(2k+n)|\lambda|}) \sqrt{(2k+n)|\lambda|}} .\end{equation}
where $ C $ is independent of $ N.$   From the definition of $ G_N $ it follows that 
$$ \widehat{G_N}(\lambda) = \Pi_{j=1}^N \widehat{F_j}(\lambda)  =   \Pi_{j=1}^N \big( \sum_{k=0}^\infty  R_k^{n-1}(\lambda, F_j) P_k(\lambda) \big)$$
and hence  $ R_k^{n-1}(\lambda, G_N) =  \Pi_{j=1}^N R_k^{n-1}(\lambda, F_j).$ As $ F_j(z,t)  = f_j(z) g_j(t) $, we have
$$   R_k^{n-1}(\lambda, G_N) = \big( \Pi_{j=1}^N  \widehat{g_j}(\lambda) \big)  \big( \Pi_{j=1}^N R_k^{n-1}(\lambda, f_j) \big)  .                   $$
 As the first factor  is bounded by one, it is enough to consider the product $\Pi_{j=1}^N R_k^{n-1}(\lambda, f_j).$
 
 We now  choose $ \rho_j $ satisfying  $ \rho_j \geq  c_n^2\, e^2  \frac{\Theta(j)}{j} $  where $ c_n $ is the same constant appearing in Lemma \ref{est-four-lem}. We then take
 $ N=  \lfloor \Theta(((2k+n)|\lambda|)^{\frac{1}{2}})((2k+n)|\lambda|)^{\frac{1}{2}}\rfloor $ and consider
 $$\Pi_{j=1}^N R_k^{n-1}(\lambda, f_j) \leq \Pi_{j=1}^N c_n (\rho_j \sqrt{(2k+n)|\lambda|})^{-n+1/2} $$
 where we have used the estimates proved in Lemma \ref{est-four-lem}. As $ \rho_j $ is decreasing
 \begin{equation}\label{eq5} \Pi_{j=1}^N c_n (\rho_j \sqrt{(2k+n)|\lambda|})^{-n+1/2} \leq c_n^N    \big(\rho_N\sqrt{(2k+n)|\lambda| }\big)^{-(n-1/2)N}.\end{equation}
 By the choice of $ \rho_j $ it follows that 
 $$\rho_N^2(2k+n)|\lambda| \geq  c_n^4 e^4 \frac{\Theta(N)^2}{N^2} (2k+n)|\lambda| . $$
 As $ \Theta $ is decreasing and $ N \leq \sqrt{(2k+n)|\lambda|)} $ we have $ \Theta(N) \geq \Theta( \sqrt{(2k+n)|\lambda|}) $ and so 
 $$  \Theta(N)^2 (2k+n)|\lambda|  \geq \Theta\big( \sqrt{(2k+n)|\lambda|} \big)^2 (2k+n)|\lambda|  \geq N^2 $$
 which proves that $\rho_N^2(2k+n)|\lambda| \geq  c_n^4 e^4 .$ Using this in (\ref{eq5}) we obtain
 $$  \Pi_{j=1}^N c_n \big(\rho_j \sqrt{(2k+n)|\lambda|}\big)^{-n+1/2} \leq  (c_n^2 e^2)^{-(n-1)N} e^{-N}.    $$
 Finally, as $ N+1 \geq \Theta(((2k+n)|\lambda|)^{\frac{1}{2}})((2k+n)|\lambda|)^{\frac{1}{2}} $, we obtain the estimate (\ref{eq6}).
 \end{proof}
 
 \subsection{Ingham's theorem} We can now prove Theorem \ref{ingh-hei}. Since half of the theorem has been already proved, we only need to prove the following.
 
 \begin{thm}
 		Let $\Theta :\R\rightarrow [0,\infty)$ be an even, decreasing function with $\lim_{|\lambda|\rightarrow\infty}\Theta(\lambda)=0$ and $I=\int_{1}^{\infty} \Theta(\lambda) \lambda^{-1} d\lambda = \infty.$ Suppose the Fourier transform of $f\in L^1(\mathbb{H}^n)$  satisfies $$\hat{f}(\lambda)^*\hat{f}(\lambda)\leq e^{-\Theta(\sqrt{H(\lambda)})\sqrt{H(\lambda)}},\ \lambda\neq 0.$$ Further assume that $ f(\rho \, \omega \sqrt{\sin \theta}, \rho^2 \cos \theta) = f(\rho \, \omega , 0) $ in the Heisenberg coordinates.  If $f$ vanishes on a non-empty open set containing $0$  then $f=0\ a.e.$
 	\end{thm}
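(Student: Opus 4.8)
The plan is to deduce the statement from Chernoff's theorem for the sublaplacian (Theorem~\ref{CH}), applied directly to $f$. Three of its hypotheses come for free: $f$ satisfies the structural identity $f(\rho\,\omega\sqrt{\sin\theta},\rho^2\cos\theta)=f(\rho\,\omega,0)$ by assumption, and since $f$ vanishes on an open neighbourhood of $0$, once we know that $f$ is smooth all of its partial derivatives vanish at $0$. Thus everything reduces to verifying that $\mathcal{L}^m f\in L^2(\He^n)$ for every $m\in\mathbb{N}$ --- which, by hypoellipticity of $\mathcal{L}$, also yields $f\in C^\infty$ --- and that $\sum_{m=1}^\infty\|\mathcal{L}^m f\|_2^{-1/2m}=\infty$; Theorem~\ref{CH} then forces $f\equiv 0$.

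To control $\|\mathcal{L}^m f\|_2$ I would use $\widehat{\mathcal{L}^m f}(\lambda)=\widehat f(\lambda)H(\lambda)^m$, the Plancherel formula, the identity $\|\widehat f(\lambda)H(\lambda)^m\|_{HS}^2=\operatorname{tr}\big(\widehat f(\lambda)^\ast\widehat f(\lambda)H(\lambda)^{2m}\big)$, and the monotonicity of the trace under the operator inequality $\widehat f(\lambda)^\ast\widehat f(\lambda)\le e^{-\Theta(\sqrt{H(\lambda)})\sqrt{H(\lambda)}}$ (legitimate since $H(\lambda)^{2m}$ commutes with the right-hand side). This gives
$$\|\mathcal{L}^m f\|_2^2\le C\int_{-\infty}^\infty\operatorname{tr}\!\Big(e^{-\Theta(\sqrt{H(\lambda)})\sqrt{H(\lambda)}}\,H(\lambda)^{2m}\Big)|\lambda|^n\,d\lambda.$$
Expanding the trace over the scaled Hermite basis, using that $(2k+n)|\lambda|$ is an eigenvalue of $H(\lambda)$ of multiplicity $\|P_k(\lambda)\|_{HS}^2\le C(1+k)^{n-1}$, and substituting first $u=(2k+n)|\lambda|$ and then $u=v^2$, the convergent sum $\sum_k(1+k)^{n-1}(2k+n)^{-n-1}$ factors out and one is left with
$$\|\mathcal{L}^m f\|_2^2\le C_n\int_0^\infty e^{-\Theta(v)v}\,v^{4m+2n+1}\,dv=:C_n\,I_m.$$
Hence $\mathcal{L}^m f\in L^2(\He^n)$ for every $m$ as soon as $I_m<\infty$; the finiteness of $I_m$ is the first place where the hypothesis $\int_1^\infty\Theta(t)t^{-1}\,dt=\infty$ is needed, since divergence of that integral (with $\Theta$ decreasing) prevents $v\,\Theta(v)$ from staying as small as a constant multiple of $\log v$ near infinity, which is exactly what makes $e^{-\Theta(v)v}$ absorb all polynomial factors.

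It remains to establish the quasi-analyticity estimate $\sum_m I_m^{-1/4m}=\infty$; combined with $\|\mathcal{L}^m f\|_2^2\le C_nI_m$ this yields $\sum_m\|\mathcal{L}^m f\|_2^{-1/2m}=\infty$ (as in Lemma~\ref{series}, the constants being harmless). Up to the innocuous factor $v^{2n+1}$, $I_m$ is an even moment of the weight $e^{-\Theta(v)v}$, and the equivalence
$$\int_1^\infty\frac{\Theta(t)}{t}\,dt=\infty\iff\sum_m I_m^{-1/4m}=\infty$$
is precisely the classical Ingham--Denjoy--Carleman bookkeeping. I would carry it out along the usual lines: bound $I_m$ from above through the counting function of $\Theta$ (equivalently, through the maximal function $r\mapsto\sup_{v>0}r^{2v}e^{-\Theta(v)v}$), so that divergence of $\int_1^\infty\Theta(t)t^{-1}\,dt$ translates into divergence of $\sum_m I_m^{-1/4m}$. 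With both facts in hand, $f$ satisfies all the hypotheses of Theorem~\ref{CH} and we conclude $f\equiv 0$.

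The step I expect to be the main obstacle is the analysis of the integrals $I_m$: from the single assumption $\int_1^\infty\Theta(t)t^{-1}\,dt=\infty$, with $\Theta$ only nonnegative, even and decreasing, one needs simultaneously that $I_m<\infty$ for all $m$ (so that $f$ genuinely lies in the domain of every power of $\mathcal{L}$) and that $\sum_m I_m^{-1/4m}$ diverges. This requires a careful reduction concerning $\Theta$, and is the technical heart of the argument. By contrast, the trace inequality, the bookkeeping with the Hermite multiplicities and the changes of variables, and the final application of Theorem~\ref{CH}, are all routine.
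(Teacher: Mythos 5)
Your reduction to Theorem~\ref{CH} breaks down at the very first step you flag as ``routine in spirit'': the claim that the divergence of $\int_1^\infty \Theta(t)t^{-1}\,dt$ forces the moments $I_m=\int_0^\infty e^{-\Theta(v)v}v^{4m+2n+1}\,dv$ to be finite, hence $\mathcal{L}^m f\in L^2(\He^n)$. Divergence is a largeness-on-average condition and is perfectly compatible with $v\Theta(v)$ being bounded on arbitrarily long blocks. Concretely, take $\Theta$ piecewise constant with $\Theta\equiv 1/v_k$ on $[v_k,v_{k+1})$, where $v_{k+1}=v_k e^{v_k}$: then $\Theta$ is decreasing to $0$, each block contributes $\int_{v_k}^{v_{k+1}}\Theta(t)t^{-1}dt=1$ so the integral diverges, yet on $[v_k,2v_k]$ one has $e^{-v\Theta(v)}\ge e^{-2}$, so every $I_m$ (indeed every polynomial moment of the weight) is infinite. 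For such $\Theta$ the hypothesis $\hat f(\lambda)^*\hat f(\lambda)\le e^{-\Theta(\sqrt{H(\lambda)})\sqrt{H(\lambda)}}$ gives no control whatsoever on $\|\mathcal{L}^m f\|_2$, so you cannot even place $f$ in the domain of the powers of $\mathcal{L}$, let alone run the Denjoy--Carleman estimate; your heuristic that divergence ``prevents $v\Theta(v)$ from staying as small as a constant multiple of $\log v$'' is simply false. The same issue infects your second step: the paper's bound $I\le C(4m)^{4m}\Theta(m^4)^{-4m}$ uses a tail estimate that needs a genuine lower bound on $\Theta$, not just divergence.

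This is precisely why the paper argues in two stages. First it proves the statement under the \emph{additional} assumption $\Theta(\lambda)\ge 2|\lambda|^{-1/2}$, which makes $e^{-\Theta(\sqrt\lambda)\sqrt\lambda}\le e^{-2\lambda^{1/4}}$ and renders your computation (Plancherel, multiplicity bound, change of variables, Stirling, integral test) valid; this part of your proposal matches the paper. For general $\Theta$ it convolves $f$ with the compactly supported radial function $F$ (and its dilates $F_\varepsilon$) built in Theorem~\ref{construct} with decay governed by $\Psi(y)=(1+|y|)^{-1/2}$, so that $h=f\ast F_\varepsilon$ vanishes near $0$ and has decay governed by $\Theta+\varepsilon\Psi(\varepsilon\cdot)\gtrsim |y|^{-1/2}$, to which the first stage applies. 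A further wrinkle you would also have to address: $h$ need no longer satisfy the structural assumption $h(\rho\omega\sqrt{\sin\theta},\rho^2\cos\theta)=h(\rho\omega,0)$, so Theorem~\ref{CH} cannot be quoted for $h$; the paper instead reruns the Stieltjes-vector/essential self-adjointness argument of Proposition~\ref{non-essen} with $V=\{R_\sigma\delta_r(f\ast F_\varepsilon)\}$, using that $f$ itself satisfies the structural assumption and that $F_\varepsilon$ is an approximate identity. Your proposal is missing this entire mechanism, and without it the proof only covers those $\Theta$ with an extra lower bound, not the stated theorem.
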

 	\begin{proof}Without loss of generality we can assume that $f$ vanishes on $B(0,\delta) .$ 		First we assume that $\Theta (\lambda)\geq 2|\lambda|^{-\frac{1}{2}},\,\, |\lambda| \geq 1.$
 		In view of Plancherel theorem for the group Fourier transform on the Heisenberg group we have $$\|\mathcal{L}^mf\|^2_2=(2\pi)^{-(n+1)}\int_{-\infty}^{\infty}\|\hat{f}(\lambda)H(\lambda)^m\|^2_{HS}|\lambda|^nd\lambda.$$ Using the formula for Hilbert-Schmidt norm of an operator we have 
 		$$\|\mathcal{L}^mf\|^2_2=(2\pi)^{-(n+1)}\int_{-\infty}^{\infty}\sum_{\alpha}((2|\alpha|+n)|\lambda|)^{2m}\|\hat{f}(\lambda)\Phi^{\lambda}_{\alpha}\|^2_2|\lambda|^nd\lambda$$ Now the given condition on the Fourier transform leads to the estimate
 		\begin{align*}
 		\|\mathcal{L}^mf\|^2_2\leq& C \int_{-\infty}^{\infty}\sum_{\alpha}((2|\alpha|+n)|\lambda|)^{2m}e^{-\Theta (((2|\alpha|+n)|\lambda|)^{\frac{1}{2}})((2|\alpha|+n)|\lambda|)^{\frac{1}{2}}}|\lambda|^nd\lambda\\
 		\leq & C\sum_{k=0}^{\infty}(2k+n)^{n-1}\int_{-\infty}^{\infty}((2k+n)|\lambda|)^{2m}e^{-\Theta(((2k+n)|\lambda|)^{\frac{1}{2}})((2k+n)|\lambda|)^{\frac{1}{2}}}|\lambda|^nd\lambda
 		\end{align*}
 		Now changing the variable from $\lambda$ to $(2k+n)^{-1}\lambda$ we get
 		$$\|\mathcal{L}^mf\|^2_2\leq C \sum_{k=0}^{\infty}(2k+n)^{-2}\int_{0}^{\infty} \lambda^{2m+n}e^{-\Theta(\sqrt{\lambda})\sqrt{\lambda}}d\lambda.$$ 		
The integral $ I $ appearing above can be estimated as follows. Under the extra assumption $\Theta (\lambda)\geq 2|\lambda|^{-\frac{1}{2}} $, on $ \Theta $ we have
\begin{align*} I = & \int_{0}^{m^8}\lambda^{2m+n}e^{-\Theta(\sqrt{\lambda})\sqrt{\lambda}}d\lambda+\int_{m^8}^{\infty}\lambda^{2m+n}e^{-\Theta(\sqrt{\lambda})\sqrt{\lambda}}d\lambda\\
 		\leq &\  2 m^{8(n+1)}\int_{0}^{m^4}\lambda^{4m-1}e^{-\Theta (m^4)\lambda}d\lambda + 4 \int_{m^2}^{\infty}\lambda^{8m+4(n+1)-1}e^{-2\lambda} d\lambda.
 		\end{align*} 
The above is dominated by a sum of two gamma integrals which can be evaluated to get
$$  I \leq 2 m^{8(n+1)} \Gamma(4m) \Theta(m^4)^{-4m} + 4 e^{-m^2} \Gamma(8m+4(n+1)).$$		
Using  Stirling's formula (see Ahlfors \cite{A}) $ \Gamma(x) = \sqrt{2\pi} x^{x-1/2} e^{-x} e^{\theta(x)/12x} ,  0< \theta(x) <1 $ valid for $ x > 0 ,$ we observe the the second term 	in the estimate for $ I $ goes to zero as $ m $ tends to infinity and the first term (and hence $ I $ itself ) is bounded by $ C (4m)^{4m} \Theta(m^4)^{-4m}.$

Thus we have proved the estimate 
$ \| \mathcal{L}^m f\|_2^2 \leq C (4m)^{4m} \Theta(m^4)^{-4m}.$
 The hypothesis on $ \Theta $ namely,  $\int_{1}^{\infty}\frac{\Theta (t)}{t}dt=\infty,$  by a change of variable implies that $\int_{1}^{\infty}\frac{\Theta (y^4)}{y}dy=\infty.$ Hence by integral test we get $\sum_{m=1}^{\infty}\frac{\Theta(m^4)}{m}=\infty.$ Therefore, it follows that  $\sum_{m=1}^{\infty}\|\mathcal{L}^mf\|^{-\frac{1}{2m}}_2=\infty.$ Since it vanishes on $B(0,\delta)$, $f$ and all its partial derivatives vanish at the origin. Therefore, by Chernoff's theorem for the sublaplacian we conclude that $f=0.$ Now we consider the general case. 
 
The function $\Psi(y)= (1+|y|)^{-1/2} $ satisfies  $\int_{1}^{\infty}\frac{\Psi(y)}{y}dy<\infty.$ By Theorem \ref{construct}  we can construct a radial function $ F \in L^2(\mathbb{H}^n)$ supported in $ B(0,\delta/2) $ such that $$\hat{F}(\lambda)^\ast \hat{F}(\lambda)\leq e^{-\Psi (\sqrt{H(\lambda)})\sqrt{H(\lambda)}},\ \lambda\neq 0.$$  As $ f $ is assumed  to vanish on $ B(0,\delta) ,$ the function $h=f\ast F$ vanishes on the smaller ball $ B(0,\delta/2).$ This can be easily verified by looking at 
 $$  f\ast F(x)=\int_{\mathbb{H}^n}f(xy^{-1})F(y)dy=\int_{B(0,\frac{\delta}{2})}f(xy^{-1})F(y)dy.$$ 
  When both $ x, y \in B(0,\delta/2),  \, d(0,xy^{-1})=|xy^{-1}|\leq |x|+|y|<\delta$ and hence $ f(xy^{-1}) =0$  proving that $ f\ast F(x) =0.$  The same is true for all the derivatives of $ h.$ We now claim that 
  $$  \widehat{h}(\lambda)^\ast \widehat{h}(\lambda) \leq e^{- 2 \Phi(\sqrt{H(\lambda)}) \sqrt{H(\lambda)}} $$
  where $ \Phi(y) = \Theta(y)+\Psi(y).$ As $ \widehat{h}(\lambda) = \widehat{f}(\lambda)\widehat{F}(\lambda) $, for any $ \varphi \in L^2(\R^n) $ we have
  $$ \langle \widehat{h}(\lambda)^\ast \widehat{h}(\lambda) \varphi, \varphi \rangle = \langle \widehat{f}(\lambda)^\ast \widehat{f}(\lambda) \widehat{F}(\lambda) \varphi, \widehat{F}(\lambda)\varphi \rangle .$$
 The hypothesis on $ f $ gives us the estimate 
 $$ \langle \widehat{f}(\lambda)^\ast \widehat{f}(\lambda) \widehat{F}(\lambda) \varphi, \widehat{F}(\lambda)\varphi \rangle  \leq C \langle e^{- 2 \Theta(\sqrt{H(\lambda)}) \sqrt{H(\lambda)}} \widehat{F}(\lambda) \varphi, \widehat{F}(\lambda)\varphi \rangle .$$
 As $ F $ is radial, $ \widehat{F}(\lambda) $ commutes with any function of $ H(\lambda)$ and hence the right hand side can be estimated using the decay of $ \widehat{F}(\lambda)$:
$$ \langle  \widehat{F}(\lambda)^\ast \widehat{F}(\lambda) e^{- \Theta(\sqrt{H(\lambda)}) \sqrt{H(\lambda)}}\varphi,  e^{- \Theta(\sqrt{H(\lambda)}) \sqrt{H(\lambda)}}\varphi \rangle 
  \leq C  \langle   e^{-2 (\Theta+\Psi)(\sqrt{H(\lambda)}) \sqrt{H(\lambda)}}\varphi,  \varphi \rangle .$$
  This proves our claim on $ \widehat{h}(\lambda) $ with $ \Phi= \Theta + \Psi.$  We also note that  $ \Phi(y) \geq |y|^{-1/2} .$  However, we cannot appeal to  Theorem \ref{CH}  since $ h = f \ast F $ need not satisfy the extra assumption $ h(\rho \, \omega \sqrt{\sin \theta}, \rho^2 \cos \theta) = h(\rho \, \omega , 0) .$ Nevertheless, we can modify the proof of Proposition \ref{non-essen} to complete the proof of Ingham's theorem.

Given $ F $ as above, let us consider $  \delta_rF(z,t) = F(rz,r^2t).$ It has been shown elsewhere (see e.g. \cite{LT}) that
$$ \widehat{\delta_rF}(\lambda) = r^{-(2n+2)}  d_r \circ \widehat{F}(r^{-2} \lambda) \circ d_r^{-1} $$
where $ d_r $ is the standard dilation on $ \R^n$ given by $ d_r\varphi(x) = \varphi(rx).$ The property of the function $ F ,$ namely $\hat{F}(\lambda)^\ast \hat{F}(\lambda)\leq e^{-2\Psi (\sqrt{H(\lambda)})\sqrt{H(\lambda)}}$ gives us 
$$ \widehat{\delta_rF}(\lambda)^\ast \widehat{\delta_rF}(\lambda) \leq C r^{-2(2n+2)}    d_r \circ e^{-2\Psi (\sqrt{H(\lambda/r^2)})\sqrt{H(\lambda/r^2)}} \circ d_r^{-1} .$$
Testing against $ \Phi_\alpha^\lambda $ we can simplify the right hand side which gives us
$$ \widehat{\delta_rF}(\lambda)^\ast \widehat{\delta_rF}(\lambda) \leq C r^{-2(2n+2)}      e^{-2\Psi_r (\sqrt{H(\lambda)})\sqrt{H(\lambda)}} $$ 
where $ \Psi_r(y) = \frac{1}{r}\Psi(y/r).$ If we let $ F_\varepsilon(x) = \varepsilon^{-(2n+2)} \delta_{\varepsilon}^{-1}F(x) $ then it follows that $F_\varepsilon $ is an approximate identity. Moreover, $ F_\varepsilon $ is supported in $ B(0,\varepsilon \delta/2) $ and satisfies the same hypothesis as $ F $ with $\Psi(y)$ replaced by $ \varepsilon \Psi(\varepsilon y)$ which has the same integrability and decay conditions. 

 Returning to the proof of Proposition \ref{non-essen} we only need to produce a subset $ V  \subset D_{St}(A)$  satisfying the conditions $(i) $ and $ (ii).$ Then the linear span of $ D_{St}(A) $ will be dense contradicting the already proved result that $ A $ is not essentially self-adjoint. Suppose the function $ f $ given in Theorem 4.6 is non trivial. We consider the set
$$  V = \{  R_\sigma \circ \delta_r (f \ast F_\varepsilon) :  r, \varepsilon > 0,  \sigma \in U(n) \} .$$ 
As $ F_\varepsilon $ satisfies the same hypothesis as $ F $ with $ \Psi $ replaced by $ \varepsilon \Psi(\varepsilon y)$ it follows that $ f \ast F_\varepsilon $ satisfies the hypothesis of the theorem with $ \Theta(y) $ replaced by $ \Theta(y) +\varepsilon \Psi(\varepsilon y)$. By the previous part of the theorem it follows that   $\sum_{m=1}^{\infty}\|\mathcal{L}^m(f \ast F_\varepsilon)\|^{-\frac{1}{2m}}_2=\infty.$ Hence for any $ 0 < \varepsilon < 1, f \ast F_\varepsilon \in D_{St}(A) $ and consequently $ V \subset D_{St}(A).$  Thus $ V $ satisfies the condition $(i).$

 To verify the condition $ (ii) $ we observe that $ R_\sigma( f\ast g) = R_\sigma f \ast g $ for any radial $ g $ and $ \delta_r (f \ast g) = r^{2n+2} \, (\delta_r f \ast \delta_r g ) $. Consequently, as $ F $ is radial it follows that 
$ R_\sigma \circ \delta_r (f \ast F_\varepsilon) = (R_\sigma \circ \delta_r f ) \ast F_{\varepsilon/r}.$ Since $ f(\rho \, \omega \sqrt{\sin \theta}, \rho^2 \cos \theta) = f(\rho \, \omega , 0) ,$ we can proceed as in the proof of Theorem  \ref{CH} to show that for any $ (z,t) $ we can find $ r $ and $ \sigma $ such that $ R_\sigma \circ \delta_r f (z,t) \neq 0.$ As $ f $ is smooth and $ F_\varepsilon $ is an approximate identity, $(R_\sigma \circ \delta_r f ) \ast F_{\varepsilon/r}(z,t)$ converges to $(R_\sigma \circ \delta_r f )(z,t) $ as $ \varepsilon  \rightarrow 0.$ Hence  for all small enough $ \varepsilon $ we have $(R_\sigma \circ \delta_r f ) \ast F_{\varepsilon/r}(z,t) \neq 0.$ This proves our claim that $ V $ satisfies condition $(ii) $ and completes the proof.
\end{proof}

\section*{Acknowledgments}
The authors would like to thank Prof. Swagato K. Ray for his suggestions and discussions.
The first author is supported by Inspire faculty award from D. S. T, Govt. of India.
The second author is supported by Int. Ph.D. scholarship from Indian Institute of Science. The third author is supported by a research fellowship from Indian Statistical Institute.
The last author is supported by  J. C. Bose Fellowship from D.S.T., Govt. of India  as well as a grant from U.G.C.\\

\end{document}